\newcommand{\C}{\mathbb{C}}
\newcommand{\Q}{\mathbb{Q}}
\newcommand{\Z}{\mathbb{Z}}
\newcommand{\W}{\mathbb{W}}
\newcommand{\LL}{\mathcal{L}}
\newcommand{\TT}{\mathcal{T}}
\newcommand{\Sc}{\mathcal{S}}
\newcommand{\WW}{\mathcal{W}}
\newcommand{\GL}{\mathrm{GL}}
\newcommand{\SL}{\mathrm{SL}}
\newcommand{\Sp}{\mathrm{Sp}}
\newcommand{\U}{\mathrm{U}}
\newcommand{\M}{\mathrm{M}}
\newcommand{\Irr}{\mathrm{Irr}}
\newcommand{\temp}{\mathrm{temp}}
\newcommand{\diag}{\mathrm{diag}}
\newcommand{\Gal}{\mathrm{Gal}}
\newcommand{\vol}{\mathrm{vol}}
\newcommand{\id}{\mathrm{id}}
\newcommand{\tr}{\mathrm{tr}}
\newcommand{\Hom}{\mathrm{Hom}}
\newcommand{\Supp}{\mathrm{Supp}}
\newcommand{\Ind}{\mathrm{Ind}}
\newcommand{\ind}{\mathrm{ind}}
\newcommand{\ord}{\mathrm{ord}}
\newcommand{\re}{\mathrm{Re}}
\newcommand{\Herm}{\mathrm{Herm}}
\newcommand{\iif}{&\quad&\text{if }}
\newcommand{\resp}{resp.~}
\renewcommand{\1}{\mathbf{1}}
\newcommand{\ep}{\varepsilon}
\newcommand{\bs}{\backslash}
\newcommand{\oo}{\mathfrak{o}}
\newcommand{\pp}{\mathfrak{p}}
\newcommand{\bv}{\mathbf{v}}
\newcommand{\bm}{\mathbf{m}}
\newcommand{\bn}{\mathbf{n}}
\newcommand{\half}[1]{\frac{#1}{2}}
\newcommand{\pair}[1]{\left\langle #1 \right\rangle}
\newcommand{\tl}[1]{\widetilde{#1}}
\newcommand{\ub}[1]{\underline{#1}}
\newtheorem{thm}{Theorem}[section]
\newtheorem{lem}[thm]{Lemma}
\newtheorem{prop}[thm]{Proposition}
\newtheorem{rem}[thm]{Remark}
\title[Local newforms for $\U_{2n}$]
{Local newforms for generic representations of unramified even unitary groups I: 
\\ Even conductor case}
\author{
Hiraku Atobe
}
\date{}
\subjclass[2010]{Primary 22E50; Secondary 11S37}
\keywords{Local newforms; Rankin--Selberg integrals; Local theta correspondence}
\address{
Department of Mathematics, Hokkaido University,
Kita 10, Nishi 8, Kita-Ku, Sapporo, Hokkaido, 060-0810, Japan 
}
\email{
atobe@math.sci.hokudai.ac.jp
}
\begin{document}
\maketitle

\begin{abstract}
In this paper, we define compact open subgroups of quasi-split even unitary groups for each even non-negative integers, 
and establish the theory of local newforms for irreducible tempered generic representations
with a certain condition on the central characters. 
To do this, we use the local Gan--Gross--Prasad conjecture, 
the local Rankin--Selberg integrals, and the local theta correspondence.
\end{abstract}

\tableofcontents

\section{Introduction}
In 1970's, Atkin--Lehner \cite{AL} and Li \cite{Li} introduced 
the notion of \emph{newforms} for elliptic modular forms, 
and showed the multiplicity one theorem.
Together with their results, Casselman's theory of \emph{local newforms} \cite{C}
is a bridge between modular forms 
and automorphic representations of $\GL_2/\Q$.
Since then, the theory of local newforms was developed for several groups. 
For example, for low rank cases, 
Roberts--Schmidt \cite{RS1} and Lansky--Raghuram \cite{LR}
established this theory for $\mathrm{GSp}_4$ and $\U(1,1)$, respectively. 
Casselman's result was extended to $\GL_n$ by Jacquet--Piatetski-Shapiro--Shalika \cite{JPSS} (see also \cite{J})
and by Atobe--Kondo--Yasuda \cite{AKY}. 
For other general rank cases, 
\begin{itemize}
\item
Tsai \cite{Ts} studied the local newforms of generic supercuspidal representations of $\mathrm{SO}_{2n+1}$; and 
\item
the author together with Oi and Yasuda \cite{AOY} treated the case for $\U_{2n+1}$.
\end{itemize}
In this paper, for a bridge to hermitian modular forms, 
we try to establish the theory of local newforms for $\U(n,n)$.
\vskip 10pt

Let us describe our results.
Let $E/F$ be an unramified quadratic extension 
of non-archimedean local fields of characteristic $0$ and of residue characteristic $p > 2$.
Fix a non-trivial additive character $\psi$ of $F$ 
such that $\psi|_{\oo_F} = \1$ but $\psi|_{\pp_F^{-1}} \not= \1$, 
and set $\psi_E(x) = \psi(\half{x+\overline{x}})$ for $x \in E$.
Consider a quasi-split unitary group of $2n$ variables given by 
\[
\U_{2n} = \left\{
g \in \GL_{2n}(E) \;\middle|\;
{}^t\overline{g} 
\begin{pmatrix}
0&w_n \\ 
-w_n &0
\end{pmatrix}
g = 
\begin{pmatrix}
0&w_n \\ 
-w_n &0
\end{pmatrix}
\right\}
\]
with 
\[
w_n = \begin{pmatrix}
&&1 \\
&\iddots&\\
1&&
\end{pmatrix} 
\in \GL_n(E).
\]
The center of $\U_{2n}$ is identified with $E^1 = \{x \in E^\times \;|\; N_{E/F}(x) = 1\}$.
Define a compact subgroup $K_{2m}^W$ of $\U_{2n}$ by 
$K_{0}^W = \U_{2n} \cap \GL_{2n}(\oo_E)$, 
and by 
\[
K_{2m}^W = 
\bordermatrix{
&1&2n-2&1 \cr
1&1+\pp_E^m&\oo_E&\oo_E\cr
2n-2&\pp_E^m&\oo_E&\oo_E\cr
1&\pp_E^{2m}&\pp_E^m&1+\pp_E^m
} \cap \U_{2n}
\]
for $2m > 0$. 
For an irreducible smooth representation $\pi$ of $\U_{2n}$,
we denote by $\pi_\psi$ the maximal quotient of $\pi$
on which the subgroup 
\[
Z = \left\{
\begin{pmatrix}
1&0& z \\
0&\1_{2n-2}&0\\
0&0&1
\end{pmatrix}
\in \U_{2n}
\;\middle|\; z \in F
\right\} 
\cong F
\]
acts by $\psi$.
This is a local analogue of the Fourier--Jacobi expansions of hermitian modular forms, 
and is called the \emph{Fourier--Jacobi module} of $\pi$. 
We write $\pi_\psi^{K_{2m}^W}$ for the image of the subspace $\pi^{K_{2m}^W}$ consisting of $K_{2m}^W$-fixed vectors
via the canonical surjection $\pi \twoheadrightarrow \pi_\psi$. 
\vskip 10pt

The main theorem is stated as follows.
For other notations, see Section \ref{sec.statement} below.

\begin{thm}[Theorem \ref{even}]\label{main}
Let $\pi$ be an irreducible tempered representation of $\U_{2n}$
with the $L$-parameter $\phi_\pi$ and the central character $\omega_\pi$. 
We denote by $c(\phi_\pi)$ the conductor of $\phi_\pi$. 

\begin{enumerate}
\item
If $\pi$ is not $\psi_E$-generic, 
then $\pi_{\psi}^{K_{2m}^W} = 0$ for any $2m \geq 0$. 
Conversely, if $\pi$ is $\psi_E$-generic, then there exists $2m \geq 0$ such that $\pi_{\psi}^{K_{2m}^W} \not= 0$. 

\item
Suppose that $\pi$ is $\psi_E$-generic. 
If $2m < c(\phi_\pi)$, then $\pi_{\psi}^{K_{2m}^W} = 0$. 
If $2m = c(\phi_\pi)$ or $2m = c(\phi_\pi)+1$, then 
\[
\dim_\C(\pi_{\psi}^{K_{2m}^W}) \leq 1.
\]

\item
Set $2m = c(\phi_\pi)$ or $2m = c(\phi_\pi)+1$.
Suppose that $\pi$ is $\psi_E$-generic and that $\omega_\pi$ is trivial on $E^1 \cap (1+\pp_E^m)$.
Then $\pi_\psi^{K_{2m}^W} \not= 0$. 

\end{enumerate}
\end{thm}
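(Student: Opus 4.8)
The plan is to reduce the three assertions to three ingredients: the local Rankin--Selberg integral for $\U_{2n}$, which supplies the skeleton of the argument just as in the $\GL_n$ theory; the local Gan--Gross--Prasad conjecture, which supplies uniqueness; and the local theta correspondence, which supplies an explicit newvector and the compatibility with the central character. The key object throughout is the Fourier--Jacobi module $\pi_\psi$. Resolving the Heisenberg action inside the Klingen unipotent (whose center is $Z$) by Stone--von Neumann, $\pi_\psi$ has the shape $\omega_\psi \otimes \Theta_\psi(\pi)$, where $\omega_\psi$ is the Weil representation attached to $\psi$ and $\Theta_\psi(\pi)$ is the Fourier--Jacobi lift of $\pi$, a smooth representation of a smaller unitary group $\U'$ accessible through theta methods. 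On the other side, pairing a vector of $\pi_\psi$ with a Whittaker function on $\GL_k(E)$ and a Schwartz vector of $\omega_\psi$ gives precisely the integrand of the local Rankin--Selberg integral representing $L(s, \pi \times \tau)$, an $L$-function whose conductor is controlled by $c(\phi_\pi)$. The groups $K_{2m}^W$ are, essentially by construction, those whose fixed vectors correspond to level $2m$ under this dictionary, and I would begin by making the dictionary precise at the level of these subgroups.

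For part (1), one direction is soft: a nonzero $K_{2m}^W$-fixed vector in $\pi_\psi$ maps, via the dictionary, to a nonzero vector in the generic Fourier--Jacobi model of $\pi$, which is exactly the condition that $\pi$ be $\psi_E$-generic. For the converse, if $\pi$ is $\psi_E$-generic I would use its Whittaker model together with the Rankin--Selberg integral (equivalently, the first occurrence of $\Theta_\psi$ for generic tempered representations, by the known behaviour of theta for unitary groups) to exhibit a nonzero element of $\pi_\psi^{K_{2m}^W}$ for sufficiently large $m$. The vanishing for non-$\psi_E$-generic tempered $\pi$ follows because such $\pi$ lies in an $L$-packet whose Fourier--Jacobi lift lands on a strictly smaller group, so that under the level-preserving part of the dictionary $\pi_\psi^{K_{2m}^W}$ must vanish for every $2m$.

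For part (2), vanishing below the conductor is a Rankin--Selberg statement: inserting a $K_{2m}^W$-fixed vector into the zeta integral against the unramified Whittaker function and the spherical Schwartz vector yields an integral that the general theory of these integrals forces to be a holomorphic multiple of an $L$-factor of conductor at most $2m$, and, comparing with the local functional equation and with the identification of the conductor of $L(s, \pi \times \cdot)$ with $c(\phi_\pi)$, this rules out $2m < c(\phi_\pi)$. The bound $\dim_\C(\pi_\psi^{K_{2m}^W}) \leq 1$ for $2m \in \{c(\phi_\pi), c(\phi_\pi)+1\}$ is where Gan--Gross--Prasad enters: a nonzero $v \in \pi_\psi^{K_{2m}^W}$, paired with the spherical vector of a suitable unramified representation $\sigma_0$ of $\U'$ and the spherical Schwartz vector, gives a nonzero Fourier--Jacobi functional for the pair $(\pi, \sigma_0)$, and the space of such functionals is at most one-dimensional; the content is to show that the resulting linear map out of $\pi_\psi^{K_{2m}^W}$ is injective at the minimal admissible level.

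For part (3) I would run the theta construction in reverse: starting from a spherical newvector on a smaller group in which the analogous newform theory is already available (inductively $\U'$, ultimately the $\U(1,1)$ case of \cite{LR}), transport it through the theta kernel against the spherical Schwartz vector of $\omega_\psi$ to obtain an element of $\pi_\psi^{K_{2m}^W}$ at the predicted level, and establish its nonvanishing by a Rallis inner product computation --- the relevant local theta period being nonzero because $\pi$ is tempered and the lift is a first occurrence, which via Gan--Gross--Prasad reduces to the nonvanishing of an $\varepsilon$-factor valid here automatically. The hypothesis that $\omega_\pi$ is trivial on $E^1 \cap (1 + \pp_E^m)$ is exactly the compatibility needed between $\omega_\pi$ and the central character of the spherical Schwartz vector of $\omega_\psi$, whose conductor is governed by $m$; without it $\pi_\psi^{K_{2m}^W}$ would vanish. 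That these subgroups are defined only for even index $2m$ --- so that the newform sits at level exactly $c(\phi_\pi)$ when this integer is even, and at $c(\phi_\pi)+1$ otherwise --- is what the title calls the even conductor case. The step I expect to be the main obstacle is precisely the first one: choosing $K_{2m}^W$, the Schwartz vector in $\omega_\psi$, and the companion compact subgroup on the theta partner so that being fixed at level $2m$ matches across the Fourier--Jacobi, Rankin--Selberg, and theta descriptions with no loss and only the predicted shift. Once that is in place, parts (1), (2), (3) follow from, respectively, the genericity criterion for Fourier--Jacobi models, the conductor theory of local Rankin--Selberg integrals together with Gan--Gross--Prasad multiplicity one, and the Rallis inner product formula.
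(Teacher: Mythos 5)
Your proposal identifies the right three tools (Fourier--Jacobi periods/GGP, Rankin--Selberg integrals, theta), but the implementation diverges from the paper in ways that matter, and in part (3) it diverges in a direction I believe would fail.

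For part (1), you assert that a nonzero $K_{2m}^W$-fixed vector in $\pi_\psi$ ``maps via the dictionary to a nonzero vector in the generic Fourier--Jacobi model,'' but nonvanishing of the Fourier--Jacobi module does not by itself imply $\psi_E$-genericity: $\pi_\psi$ is a twisted Jacquet module along the one-dimensional center $Z$, while genericity is about the full maximal unipotent. The paper bridges this gap concretely: starting from $\varphi \in \pi^{K_{2m}^W}$ with nonzero image in $\pi_\psi$, it uses the absolute convergence of the local Fourier--Jacobi period $\alpha$ (Proposition \ref{converge}), then the Gan--Savin argument (\cite[Lemma 12.5]{GS}) to produce a tempered $\pi' \in \Irr_\temp(G_{n-1})$ --- necessarily \emph{unramified} because the data can be taken $K^{W_0}$-fixed --- with $\alpha \neq 0$, and only then invokes GGP to conclude genericity of $\pi$. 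The unramifiedness of $\pi'$ is essential, and your ``dictionary'' sentence skips exactly this step. Your alternative formulation (``such $\pi$ lies in an $L$-packet whose Fourier--Jacobi lift lands on a strictly smaller group'') is not how GGP functions here and is not accurate as stated.

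For part (2), your outline is in the right spirit, but the sentence ``the content is to show that the resulting linear map out of $\pi_\psi^{K_{2m}^W}$ is injective at the minimal admissible level'' is where essentially all the work is, and your proposal says nothing about how to do it. In the paper this is Lemma \ref{degree}: a support computation (involving the matrix $k(x,a,u) \in K_{2m}^W$) bounding the degrees of $\LL(W, f_{1/2}(\ub{X}), \overline{\phi_0})$ from below by $-m$, then the functional equation $(\ast)$ in Proposition \ref{Psi} combined with the Casselman--Shalika formula to squeeze the degrees into at most a one-dimensional window. The injectivity of $W \mapsto \Psi(W;\ub{X})$ on $\pi_\psi^{K_{2m}^W}$ comes from Proposition \ref{ggp}, which itself rests on the GGP argument from part (1). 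None of this is automatic, and a proof proposal needs to at least gesture at the degree estimate.

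For part (3), your route is genuinely different and I believe it does not work. You propose to run theta \emph{downward} to a smaller unitary group $\U'$, with base case $\U(1,1)$, and then conclude nonvanishing by a Rallis inner product computation. The paper instead lifts \emph{upward} to $\U_{2n+1}$ via $\sigma = \theta_\psi(\pi)$, where $\phi_\sigma = \phi_\pi\chi \oplus \1$ gives $c(\phi_\sigma) = c(\phi_\pi)$ and $\omega_\sigma = \omega_\pi$, and then exploits the \emph{already established} newform theory for odd unitary groups (Theorem \ref{odd} from \cite{AOY},\cite{Cheng}): $\sigma^{J_{2m}^V} \neq 0$ precisely under the stated hypotheses. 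This is the reason the theta correspondence enters at all. Going down is not a substitute: the Fourier--Jacobi lift of a tempered generic $\pi$ of $\U_{2n}$ to $\U_{2n-2}$ does not carry the right conductor/genericity control for a clean downward induction, and the Rallis inner product gives nonvanishing of a global-type period, not the support estimates needed to pin down the level. The paper's actual mechanism is a lattice model argument: Waldspurger's result (Proposition \ref{WK}) shows $\Sc(A)^{J_{2m}^V}$ is generated by $(\Sc(A)_{M_{2m}})^{J_{2m}^V}$ as a $\U(W)$-module, and Lemma \ref{span} identifies an explicit $K_{2m}^W$-fixed vector $\phi_{t, f_n}$ whose image in $\pi$ is nonzero (Proposition \ref{K-fixed}). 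To then get nonvanishing of the image in $\pi_\psi$, one needs the mixed model, a Mao--Rallis Jacquet module computation, and the crucial support statement Lemma \ref{supp} that $\Supp(\tl{F}_{t,f_{-n}}) = N_{2n}' \cdot {}^t K_{2m}^W$. None of this appears in, or is substituted for by, the Rallis inner product formula. So the key ingredient of part (3) --- the precise model-change and support calculation --- is missing from your proposal, and the inductive base-case strategy you sketch in its place would not supply it.
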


If $2m = c(\phi_\pi)$ and if $\omega_\pi$ is trivial on $E^1 \cap (1+\pp_E^m)$, 
we shall call an element in $\pi^{K_{2m}^W}$ whose image in $\pi_\psi$ is nonzero
a \emph{local newform} of $\pi$.

\begin{rem}
\begin{enumerate}
\item
If $\pi^{K_{2m}^W} \not= 0$, then $\omega_\pi$ is trivial on $E^1 \cap (1+\pp_E^m)$
since $E^1 \cap (1+\pp_E^m) \subset K_{2m}^W$.

\item
Even if $2m = c(\phi_\pi)$ or $c(\phi_\pi)+1$, 
the dimension of $\pi^{K_{2m}^W}$ can be greater than $1$.
A counterexample already appears in the case where $n=1$, 
which was treated by Lansky and Raghuram. 
See \cite[Theorem 4.2.1]{LR}. 

\item
As well as in \cite{AOY}, there might exist $K_m^W$ for odd integer $m >0$. 
Unfortunately, the author could not find it.
\end{enumerate}
\end{rem}

We expect that Theorem \ref{main} has several applications such as 
a higher level generalization of a result of Chenevier--Renard \cite{CR}. 
We will try it as a next project. 
\vskip 10pt

A usual method to establish the theory of local newforms is 
to apply the \emph{Rankin--Selberg integrals}, 
which are based on the multiplicity one theorem for 
several \emph{Gan--Gross--Prasad (GGP) pairs}.
For example, Tsai \cite{Ts} and Cheng \cite{Cheng} 
used the pairs $(\mathrm{SO}_{2n+1}(F), \mathrm{SO}_{2n}(F))$ and $(\U_{2n+1},\U_{2n})$
to obtain knowledge about newforms.
In this paper, we will also use this method as well. 
However, in our case, 
one needs the GGP pair $(\U_{2n}, \U_{2n-2})$, which is not a ``basic'' case.
More precisely, we have to consider the restrictions of irreducible representations of $\U_{2n}$ to the \emph{Jacobi group}.
Since the Jacobi group is not reductive, several arguments in \cite{Ts} would not work.
\vskip 10pt

For example, 
to prove an analogue of Theorem \ref{main} (1) in \cite{Ts}, Tsai used a lemma of Moy--Prasad (\cite[Lemma 3.4.1]{Ts}).
We do not know whether this lemma can be extended to our case. 
Instead of this lemma, 
we use the local period integrals for the refined GGP conjecture. 
Using the absolutely convergence of these integrals, 
the argument of Gan--Savin \cite[Lemma 12.5]{GS} can show Theorem \ref{main} (1).
See Section \ref{sec.proof1} below.
This is the same idea as in the previous paper \cite[Theorem 4.5]{AOY}.
\vskip 10pt

The proof of Theorem \ref{main} (2) is the same as usual. 
Namely, it is an application of the Rankin--Selberg integrals for $\U_{2n} \times \GL_{n-1}(E)$.
This theory in this case was established by Ben-Artzi--Soudry \cite{BS} and Morimoto \cite{Mori}, 
and is recalled in Theorem \ref{RS}. 
Especially, the multiplicativity of the gamma factors is included in \cite[Theorem 3.1]{Mori}.
Using the Rankin--Selberg integrals, we will define certain formal power series.
Lemma \ref{degree} is a key computation to give lower bounds of the degrees. 
Using the functional equations of the Rankin--Selberg integrals, 
we would obtain an upper bound of the dimension of $\pi^{K_{2m}^W}_\psi$. 
However, since the Rankin--Selberg integrals for $\U_{2n} \times \GL_{n-1}(E)$
factors through $\pi \twoheadrightarrow \pi_\psi$, 
we cannot estimate the dimension of $\pi^{K_{2m}^W}$ itself. 
\vskip 10pt

For the proof of Theorem \ref{main} (3), 
the fact that we have to deal with the Jacobi group complicates the situation.
Indeed, the arguments in \cite[Chapter 8]{Ts} and in the previous paper \cite[Theorem 4.3]{AOY} might not work.
In this paper, we give a new, or rather old, idea.
\vskip 10pt

Recall that the theory of newforms was initiated by Atkin--Lehner \cite{AL} and Li \cite{Li} 
for elliptic modular forms of integral weights.
Kohnen \cite{Kohnen} established a similar theory to the half-integral weights case.
Moreover, he proved that 
the newforms of integral weights and the ones of half-integral weights 
are related to each other by the \emph{Shimura correspondence}. 
Since the \emph{theta correspondence} is a generalization of the Shimura correspondence, 
the local newforms will be compatible with the local theta correspondence in the future.
Instead, the local theta correspondence would be useful to show the existence of the local newforms.
This is our idea.
\vskip 10pt

In fact, if we let $\sigma = \theta_\psi(\pi)$ be the theta lift of $\pi$ to $\U_{2n+1}$, 
then $\sigma$ is nonzero irreducible tempered and generic, and 
its conductor and central character are the same as the ones of $\pi$. 
By the definition of the theta lifting, 
we have a surjective $\U_{2n+1} \times \U_{2n}$-equivariant map
\[
\omega_\psi \rightarrow \sigma \boxtimes \pi, 
\]
where $\omega_\psi$ is the Weil representation of $\U_{2n+1} \times \U_{2n}$. 
Let $K_{2m}^V$ be a conjugate of the compact subgroup of $\U_{2n+1}$ defined in \cite{AOY}. 
Set $J_{2m}^V$ to be the subgroup of $\U_{2n+1}$ generated by $K_{2m}^V$ and the central subgroup $E^1 \cap (1+\pp_E^m)$. 
Then by using a lattice model and Waldspurger's result (Proposition \ref{WK}), 
one can show that $\omega_\psi^{J_{2m}^V}$ is generated by $\omega_\psi^{J_{2m}^V \times K_{2m}^W}$ as a representation of $\U_{2n}$. 
Hence if $2m \geq c(\phi_\pi)$ and $\omega_\pi|_{E^1 \cap (1+\pp_E^m)} = \1$, 
then $\pi^{K_{2m}^W} \not= 0$ since $\sigma^{J_{2m}^V} \not= 0$.
See Proposition \ref{K-fixed} for the details.
\vskip 10pt

On the other hand, it is much harder to show $\pi^{K_{2m}^W}_\psi \not= 0$ 
when $2m = c(\phi_\pi)$ or $2m = c(\phi_\pi)+1$.
Let $l_\sigma \colon \sigma \rightarrow \C$ be a nonzero Whittaker functional.
Then the composition 
\[
\omega_\psi \rightarrow \sigma \boxtimes \pi \xrightarrow{l_\sigma \otimes \id} \pi
\]
factors through a twisted Jacquet module of $\omega_\psi$ 
along a maximal unipotent subgroup of $\U_{2n+1}$.
By the same argument as Mao--Rallis \cite[Proposition 2.3]{MR}, 
this twisted Jacquet module is isomorphic to the compact induction $\ind_{N_{2n}'}^{\U_{2n}}(\mu)$, 
where $N_{2n}'$ is a maximal unipotent subgroup of $\U_{2n}$ and $\mu$ is a generic character of $N_{2n}'$. 
By Cheng's result \cite[Theorem 1.4, Lemma 7.5]{Cheng}, 
$l_\sigma$ is nonzero on the one dimensional subspace $\sigma^{J_{2m}^V}$ 
if $l_\sigma$ is suitably chosen.
Hence there is $\phi \in \omega_\psi^{J_{2m}^V \times K_{2m}^W}$ 
such that it is nonzero under the all maps in the following diagram:
\[
\xymatrix{
\omega_\psi \ar@{->}[r] \ar@{->}[d] & \sigma \boxtimes \pi \ar@{->}[rr]^-{l_\sigma \otimes \id} && \pi \\
\ind_{N_{2n}'}^{\U_{2n}}(\mu) \ar@{-->}[rrru] &&&
}
\]
Lemma \ref{supp} asserts that the support of the image of $\phi$ in $\ind_{N_{2n}'}^{\U_{2n}}(\mu)$ is small enough.
It implies that $\pi^{K_{2m}^W}_\psi \not= 0$ immediately. 
See Section \ref{sec.proof3} for the details.
Finally, to prove Lemma \ref{supp}, we need to change models of the Weil representation, 
and review the argument of Mao--Rallis \cite[Proposition 2.3]{MR}. 
\vskip 10pt

This paper is organized as follows. 
In Section \ref{sec.statement}, 
we introduce several notations and state our main theorem.
Using the local Fourier--Jacobi periods, 
we show Theorem \ref{main} (1) in Section \ref{sec.FJ}. 
Theorem \ref{main} (2) is obtained as an application of the Rankin--Selberg integrals in 
Section \ref{sec.unique}. 
Finally, we study theta liftings to prove Theorem \ref{main} (3) in Section \ref{sec.existence}.

\subsection*{Acknowledgement}
We would like to thank Kazuki Morimoto and Yao Cheng for sending their preprints \cite{Mori} and \cite{Cheng}, respectively. 
We appreciate Ren-He Su for a private discussion 
from which we obtain 
the idea to use the theta correspondence for newforms. 
The author was supported by JSPS KAKENHI Grant Number 19K14494.

\subsection*{Notation}
Let $E/F$ be an unramified quadratic extension 
of non-archimedean local fields of characteristic $0$ and of residue characteristic $p > 2$.
The non-trivial element in $\Gal(E/F)$ is denoted by $x \mapsto \overline{x}$.
Set $\oo_E$ (\resp $\oo_F$) to be the ring of integers of $E$ (\resp $F$), 
and $\pp_E$ (\resp $\pp_F$) to be its maximal ideal. 
Let $E^1 = \{x \in E^\times \;|\; x\overline{x} = 1\}$ denote 
the kernel of the norm map $N_{E/F} \colon E^\times \rightarrow F^\times$. 
Fix a uniformizer $\varpi$ of $F$, which is also a uniformizer of $E$. 
When $x \in E^\times$ can be written as $x = u \varpi^l$ for some $u \in \oo_E^\times$, 
we write $\ord(x) = l$.
Set $q = |\oo_F/\pp_F|$ so that $q^2 = |\oo_E/\pp_E|$.
Let $|\cdot|_E$ be the normalized absolute value of $E$ so that $|x|_E = q^{-2 \ord(x)}$ for $x \in E^\times$.
\par

We fix $\delta \in \oo_E^\times$ such that $\overline{\delta} = -\delta$, 
and a non-trivial additive character $\psi \colon F \rightarrow \C^\times$
such that $\psi|_{\oo_F} = \1$ but $\psi|_{\pp_F^{-1}} \not= \1$. 
Set $\psi_E(x) = \psi(\half{1}\tr_{E/F}(x)) = \psi(\half{x+\overline{x}})$ and $\psi_E^\delta(x) = \psi_E(x/\delta)$. 
Then $\psi_E$ and $\psi_E^\delta$ are non-trivial additive characters of $E$ 
such that $\psi_E|_F = \psi$ and $\psi_E^\delta|_F = \1$.
The unique non-trivial quadratic unramified character of $E^\times$ is denoted by $\chi$.
Namely, $\chi|_{\oo_E^\times} = \1$ and $\chi(\varpi) = -1$.
In particular, if we write $\chi = |\cdot|_E^{s_0}$, we have $q^{-2s_0} = -1$.
\par

A representation $\pi$ of a $p$-adic group $G$ means 
a smooth representation over a complex vector space. 
When $K$ is a compact open subgroup of $G$, 
we write $\pi^K$ for the subspace of $\pi$ consisting of $K$-fixed vectors.
Let $\Irr(G)$ be the set of equivalence classes of irreducible representations of $G$, 
and $\Irr_\temp(G)$ be its subset consisting of tempered representations.

\section{Statement of the main theorem}\label{sec.statement}
In this section, we define families of compact open subgroups of unitary groups, 
and state our main theorem.

\subsection{Unitary groups}\label{sec.gp}
Let $V = V_{2n+1}$ (\resp $W = W_{2n}$) be 
a hermitian (\resp skew-hermitian) space over $E$ of dimension $2n+1$ (\resp $2n$)
equipped with a non-degenerate hermitian form $\pair{\cdot, \cdot}_V$
(\resp skew-hermitian form $\pair{\cdot,\cdot}_W$).
Assume that there are bases $\{e_n, \dots, e_1, e_0, e_{-1}, \dots, e_{-n}\}$ of $V$
and $\{f_n,\dots,f_1, f_{-1}, \dots, f_{-n}\}$ of $W$, respectively, 
such that
\[
\pair{e_i, e_j}_V = \pair{f_i, f_j}_W = 0
\]
unless $j = -i$, 
and 
\[
\pair{e_0,e_0}_V = \pair{e_i,e_{-i}}_V = \pair{f_i,f_{-i}}_W = 1
\]
for $1 \leq i \leq n$.  
\par

Using these bases, 
we often identify the associated unitary groups $\U(V)$ and $\U(W)$ with 
\begin{align*}
\U_{2n+1} &= \left\{
h \in \GL_{2n+1}(E) \;\middle|\;
{}^t\overline{h} w_{2n+1} h = w_{2n+1}
\right\}, \\
\U_{2n} &= \left\{
g \in \GL_{2n}(E) \;\middle|\;
{}^t\overline{g} J_{2n} g = J_{2n}
\right\}, 
\end{align*}
respectively, 
where we set
\[
w_n = \begin{pmatrix}
&&1\\
&\iddots&\\
1&&
\end{pmatrix}
\in \GL_n(E),
\quad
J_{2n} = 
\begin{pmatrix}
0&w_n\\
-w_n&0
\end{pmatrix} 
\in \GL_{2n}(E).
\]

\subsection{Representations of unitary groups}\label{sec.rep}
Let $N_{2n+1}$ (\resp $N_{2n}$) be the upper triangular unipotent subgroup of $\U_{2n+1}$ (\resp $\U_{2n}$).
We define generic characters of $N_{2n+1}$ and $N_{2n}$ by the same formula
\[
u \mapsto \psi_E\left(\sum_{k=1}^{n}u_{k,k+1}\right). 
\]
By abuse of notation, we denote these characters by $\psi_E$.
We say that an irreducible representation $\sigma$ of $\U_{2n+1}$ (\resp $\pi$ of $\U_{2n}$)
is \emph{generic} (\resp \emph{$\psi_E$-generic}) 
if $\Hom_{N_{2n+1}}(\sigma, \psi_E) \not= 0$ 
(\resp $\Hom_{N_{2n}}(\pi, \psi_E) \not= 0$).
\par

For an irreducible representation $\pi$ of $\U_{2n}$, 
we denote by $\pi^\vee$ the contragredient representation of $\pi$. 
By a result in \cite[Chapter 4. II. 1]{MVW}, we know $\pi^\vee \cong \pi^\theta$, 
where $\pi^\theta(g) = \pi(\theta(g))$ with 
\[
\theta \colon \U_{2n} \rightarrow \U_{2n},\; 
g \mapsto 
\begin{pmatrix}
\1_n & 0 \\ 0 & -\1_n
\end{pmatrix}
\overline{g}
\begin{pmatrix}
\1_n & 0 \\ 0 & -\1_n
\end{pmatrix}^{-1}.
\]
In particular, $\pi$ is $\psi_E$-generic if and only if $\pi^\vee$ is $\psi_E^{-1}$-generic.
\par

By the local Langlands correspondence established by Mok \cite{Mok}, 
to an irreducible representation $\sigma$ of $\U_{2n+1}$ (\resp $\pi$ of $\U_{2n}$), 
one can attach a conjugate self-dual representation $\phi_\sigma$ (\resp $\phi_\pi$)
of $W_E \times \SL_2(\C)$ of dimension $2n+1$ (\resp $2n$), 
where $W_E$ is the Weil group of $E$.
We call $\phi_\sigma$ (\resp $\phi_\pi$) 
the \emph{$L$-parameter} for $\sigma$ (\resp $\pi$).
Then we define the \emph{conductor} $c(\phi_\sigma)$ of $\phi_\sigma$ 
by the non-negative integer satisfying 
\[
\ep(s,\phi_\sigma,\psi_E)
= 
\ep(0,\phi_\sigma,\psi_E)q^{-2c(\phi_\sigma)s}.
\]
Similarly, the \emph{conductor} $c(\phi_\pi)$ of $\phi_\pi$ is defined.
\par

The center of $\U_{2n+1}$ (\resp $\U_{2n}$) is $\U_1$ which is identified with $E^1$.
For an irreducible representation $\sigma$ (\resp $\pi$) of $\U_{2n+1}$ (\resp $\U_{2n}$), 
we denote its central character by $\omega_\sigma$ (\resp $\omega_\pi$). 
If $\sigma$ (\resp $\pi$) corresponds to $\phi_\sigma$ (\resp $\phi_\pi$), 
then the $L$-parameter of $\omega_\sigma$ (\resp $\omega_\pi$) is given by 
$\det(\phi_\sigma)$ (\resp $\det(\phi_\pi)$).

\subsection{Jacobi group}\label{sec.jacobi}
Set 
\[
\bv(x,y;z) = 
\begin{pmatrix}
1 & x & y & z+\half{1}(x w_{n-1}{}^t\overline{y} - y w_{n-1} {}^t\overline{x})\\
0 & \1_{n-1} & 0 & w_{n-1}{}^t\overline{y} \\
0 & 0& \1_{n-1} & -w_{n-1}{}^t\overline{x} \\
0&0&0& 1
\end{pmatrix}
\in \U_{2n}
\]
for $x,y \in E^{n-1}$ and $z \in F$. 
Here, $E^{n-1}$ is the space of row vectors.
Let $H_{n-1} = \{\bv(x,y; z) \;|\; x,y \in E^{n-1}, z \in F\} \cong E^{2n-2} \oplus F$ 
be a Heisenberg group in $4n-3$ variables over $F$
with the multiplication law
\[
\bv(x,y;z) \bv(x',y';z') = 
\bv\left(x+x', y+y'; z+z'+\half{1}\tr_{E/F}(x w_{n-1}{}^t\overline{y} - y w_{n-1} {}^t\overline{x}) \right).
\]
We write
\begin{align*}
X_{n-1} &= \{\bv(x,0;0) \;|\; x \in E^{n-1}\}, \\
Y_{n-1} &= \{\bv(0,y;0) \;|\; y \in E^{n-1}\}, \\
Z &= \{\bv(0,0;z) \;|\; z \in F\}.
\end{align*}
By abuse of notation, we denote the character
$Z \ni \bv(0,0;z) \mapsto \psi(z)$ by $\psi$. 
\par

We identify $\U_{2n-2}$ as a subgroup of $\U_{2n}$ by the inclusion
\[
\U_{2n-2} \ni g' \mapsto \begin{pmatrix}
1 && \\ 
&g'& \\
&&1
\end{pmatrix} \in \U_{2n}.
\]
Then $\U_{2n-2}$ normalizes $H_{n-1}$.
We call $J_{n-1} = H_{n-1} \rtimes \U_{2n-2}$ the \emph{Jacobi group}.
Note that $Z$ is the center of $J_{n-1}$. 
\par

For an irreducible representation $\pi$ of $\U_{2n}$, 
we denote by $\pi_{\psi}$ the maximal quotient of $\pi$
on which $Z$ acts by $\psi$. 
We call $\pi_\psi$ the \emph{Fourier--Jacobi module} of $\pi$.
For a compact open subgroup $K$ of $\U_{2n}$, 
we denote by $\pi^K_{\psi}$ the image of $\pi^K$ 
via the canonical surjection $\pi \twoheadrightarrow \pi_{\psi}$. 
Note that $\pi_{\psi}$ is a smooth representation of $J_{n-1}$ 
so that $K$ does not act on $\pi_{\psi}$ itself. 
\par

For $t \in E^\times$, if we put $\psi'(x) = \psi(N_{E/F}(t)x)$ and 
\[
K' = 
\begin{pmatrix}
t && \\ 
&\1_{2n-2}& \\ 
&& \overline{t}^{-1}
\end{pmatrix}^{-1}
K
\begin{pmatrix}
t && \\ 
&\1_{2n-2}& \\ 
&& \overline{t}^{-1}
\end{pmatrix}, 
\]
then $\pi(\diag(t,\1_{2n-2}, \overline{t}^{-1}))$ induces isomorphisms 
\[
\pi^{K'} \xrightarrow{\sim} \pi^K,\quad
\pi_{\psi'} \xrightarrow{\sim} \pi_\psi.
\]
Hence we have $\pi_{\psi'}^{K'} \cong \pi_{\psi}^K$.

\subsection{Compact subgroups}\label{sec.cpt}
For each non-negative even integer $2m \geq 0$, 
we define compact subgroups $K_{2m}^V \subset \U(V) \cong \U_{2n+1}$ 
and $K_{2m}^W \subset \U(W) \cong \U_{2n}$ as follows. 
When $2m = 0$, we set $K_0^V = \U_{2n+1} \cap \GL_{2n+1}(\oo_E)$ and 
$K_0^W = \U_{2n} \cap \GL_{2n}(\oo_E)$. 
If $2m > 0$, we set 
\begin{align*}
K_{2m}^V &= 
\bordermatrix{
&n&1&n \cr
n&\oo_E&\pp_E^m&\oo_E \cr
1&\pp_E^m&1+\pp_E^{2m}&\pp_E^m \cr
n&\oo_E&\pp_E^m&\oo_E
} 
\cap \U_{2n+1}, \\
K_{2m}^W &= 
\bordermatrix{
&1&2n-2&1 \cr
1&1+\pp_E^m&\oo_E&\oo_E\cr
2n-2&\pp_E^m&\oo_E&\oo_E\cr
1&\pp_E^{2m}&\pp_E^m&1+\pp_E^m
} \cap \U_{2n}.
\end{align*}
Note that 
\begin{align*}
&\begin{pmatrix}
\varpi^{-m} \cdot \1_n &&\\
&1&\\
&&\varpi^{m} \cdot \1_n
\end{pmatrix}
K_{2m}^V
\begin{pmatrix}
\varpi^{-m} \cdot \1_n &&\\
&1&\\
&&\varpi^{m} \cdot \1_n
\end{pmatrix}^{-1} 
\\&= 
\bordermatrix{
&n&1&n \cr
n&\oo_E&\oo_E&\pp_E^{-2m}\cr
1&\pp_E^{2m}&1+\pp_E^{2m}&\oo_E\cr
n&\pp_E^{2m}&\pp_E^{2m}&\oo_E
} \cap \U_{2n+1}, 
\end{align*}
which is denoted by $\mathbb{K}_{2m,\U(V)}$ in \cite{AOY}, and by $K_{n,2m}$ in \cite{Cheng}.
If we set ${}^tK_{2m}^W = \{{}^tk \;|\; k \in K_{2m}^W\}$ to be the transpose of $K_{2m}^W$, 
then
\begin{align*}
K_{2m}^W = 
\begin{pmatrix}
\varpi^{-m} && \\ 
&\1_{2n-2}& \\ 
&& \varpi^m
\end{pmatrix}
{}^tK_{2m}^W
\begin{pmatrix}
\varpi^{-m} && \\ 
&\1_{2n-2}& \\ 
&& \varpi^m
\end{pmatrix}^{-1}.
\end{align*}
\par

The theory of local newforms for $\U_{2n+1}$ is established 
by the author together with Oi and Yasuda \cite[Theorem 1.1]{AOY}
and by Cheng \cite[Theorem 1.2]{Cheng} as follows.

\begin{thm}\label{odd}
Let $\sigma$ be an irreducible tempered representation of $\U_{2n+1}$
with the $L$-parameter $\phi_\sigma$. 
\begin{enumerate}
\item
If $\sigma$ is not generic, then $\sigma^{K_{2m}^V} = 0$ for any $2m \geq 0$. 
\item
If $\sigma$ is generic, then
\[
\dim_\C(\sigma^{K_{2m}^V}) = 
\left\{
\begin{aligned}
&0 \iif 2m < c(\phi_\sigma), \\
&1 \iif \text{$2m = c(\phi_\sigma)$ or $c(\phi_\sigma)+1$}.
\end{aligned}
\right. 
\]
Moreover, if $2m > c(\phi_\sigma)$, then $\sigma^{K_{2m}^V} \not= 0$.
\end{enumerate}
\end{thm}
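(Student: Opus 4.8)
\emph{Proof proposal.}
Theorem \ref{odd} is proved in \cite[Theorem 1.1]{AOY} and \cite[Theorem 1.2]{Cheng}; since the present paper reuses the method, let me sketch the strategy I would follow. The plan is to play three inputs against one another: the Rankin--Selberg zeta integrals for $\U_{2n+1}\times\GL_n(E)$ (equivalently, the doubling integrals for $\U_{2n+1}$), whose functional equations carry the factor $\ep(s,\phi_\sigma,\psi_E)$; the multiplicity-one theorem for the Gan--Gross--Prasad pair $(\U_{2n+1},\U_{2n})$, which makes the relevant zeta integral essentially unique; and an explicit test vector at the critical level. I note at the outset that the $K_{2m}^V$ form a decreasing chain $K_0^V\supseteq K_2^V\supseteq\cdots$, so once non-vanishing at the critical level is known, the ``Moreover'' clause is automatic, and in (2) the value $1$ (rather than just $\leq 1$) follows from the existence statement together with the vanishing for $2m<c(\phi_\sigma)$.

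For part (1), suppose $0\neq v\in\sigma^{K_{2m}^V}$; I want to deduce that $\sigma$ is generic. First I would restrict $\sigma$ along the GGP pair $(\U_{2n+1},\U_{2n})$ and use that, for tempered $\sigma$, the associated local period integral (a Bessel, equivalently Fourier--Jacobi, period) converges absolutely. Plugging $v$ into such a period and running the argument of Gan--Savin \cite[Lemma 12.5]{GS}, exactly as in \cite[Theorem 4.5]{AOY}, extracts a nonzero Whittaker functional on $\sigma$, so $\sigma$ is $\psi_E$-generic. A more hands-on alternative is to check directly that the Rankin--Selberg integral for $\sigma\times\tau$, with $\tau$ an unramified twist of the trivial representation of $\GL_n(E)$, is not identically zero on a $K_{2m}^V$-fixed vector, which again forces a Whittaker model; the Moy--Prasad type lemma that Tsai used for $\mathrm{SO}_{2n+1}$ is one route I would rather avoid here, as the introduction explains.

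For the vanishing and the upper bound in part (2), assume $\sigma$ generic and work in $\WW(\sigma,\psi_E)$. For $W\in\WW(\sigma,\psi_E)^{K_{2m}^V}$ I would form the local zeta integral $Z(s,W)$ and run a support computation on the relevant flag variety, the analogue of Lemma \ref{degree}, to see that $Z(s,W)$ lies in a fractional ideal of $\C[q^s,q^{-s}]$ whose degree is bounded in terms of $2m$. Feeding this into the functional equation $Z(1-s,\widetilde W)=\gamma(s,\phi_\sigma,\psi_E)\,Z(s,W)$ with $\gamma(s,\phi_\sigma,\psi_E)=\ep(s,\phi_\sigma,\psi_E)\,L(1-s,\phi_\sigma^\vee)/L(s,\phi_\sigma)$, and using the temperedness of $\phi_\sigma$ to control the $L$-factors, I would conclude $2m\geq c(\phi_\sigma)$, and that for $2m\in\{c(\phi_\sigma),c(\phi_\sigma)+1\}$ the integral $Z(s,W)$ is pinned down up to a scalar that depends linearly on $W$. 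Multiplicity one for the GGP pair then forces the space of such scalar-valued functionals to be at most one-dimensional, whence $\dim_\C\sigma^{K_{2m}^V}\leq 1$ in those two cases.

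The hard part, and where the two cited papers diverge in method, is exhibiting a nonzero vector in $\sigma^{K_{2m}^V}$ at the critical level $2m=c(\phi_\sigma)$. Here I would first reduce to supercuspidal $\sigma$: write $\sigma$ as a constituent of a representation parabolically induced from essentially square-integrable data, track the conductor through multiplicativity of the $\ep$- and $\gamma$-factors, and analyze the interaction of $K_{2m}^V$ with induction from the Siegel parabolic via the corresponding Jacquet module. For supercuspidal $\sigma$ one then either builds the vector directly from the Whittaker function and bounds its support, in the style of \cite{Ts}, or uses the doubling method together with the theory of the distinguished vector in a degenerate principal series, as in \cite{Cheng}; a local--global argument with a controlled globalization and the classical $\GL$ newform theory is a further option. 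Locating the support of the candidate vector and checking its $K_{2m}^V$-invariance is the delicate step — the same kind of bookkeeping that Lemma \ref{supp} carries out in the $\U_{2n}$ setting of this paper — and it is the step I would expect to absorb most of the work.
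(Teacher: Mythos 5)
You correctly observe that Theorem~\ref{odd} is not proved in this paper at all: it is quoted verbatim from \cite[Theorem 1.1]{AOY} and \cite[Theorem 1.2]{Cheng}, and the text preceding the theorem statement says exactly that. So there is no ``paper's own proof'' to compare against; acknowledging the citation, as you do in your first sentence, is the essential point, and your further remarks are an (uninvited but not unreasonable) reconstruction of what the cited proofs do.

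A few inaccuracies in that reconstruction are worth flagging so they do not propagate. First, the GGP pair $(\U_{2n+1},\U_{2n})$ is a \emph{Bessel} pair; the Fourier--Jacobi pair for unitary groups is of the shape $(\U_{2n},\U_{2n-2})$ (the one used in this paper via the Jacobi group), and the two are not ``equivalent'' --- your parenthetical conflates them. Second, the Rankin--Selberg integrals for $\U_{2n+1}\times\GL_n(E)$ used by Cheng are not the same thing as doubling integrals for $\U_{2n+1}$, and Cheng's construction of the newform at the critical level is carried out with the Rankin--Selberg machinery (his title is explicit about this), not with a distinguished vector in a degenerate principal series from the doubling picture. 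Third, the existence argument in \cite{AOY} rests on a Fundamental Lemma computation (again advertised in the title), which is a rather different mechanism from the support-estimate or globalization routes you list. Your observation that the $K_{2m}^V$ form a decreasing chain, so that the ``Moreover'' clause follows from nonvanishing at the critical level, is correct and is a genuine simplification worth keeping.
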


In this paper, we will prove an analogue of this theorem for $\U_{2n}$ as follows.
\begin{thm}\label{even}
Let $\pi$ be an irreducible tempered representation of $\U_{2n}$
with the $L$-parameter $\phi_\pi$ and the central character $\omega_\pi$. 

\begin{enumerate}
\item
If $\pi$ is not $\psi_E$-generic, 
then $\pi_{\psi}^{K_{2m}^W} = 0$ for any $2m \geq 0$. 
Conversely, if $\pi$ is $\psi_E$-generic, then there exists $2m \geq 0$ such that $\pi_{\psi}^{K_{2m}^W} \not= 0$. 

\item
Suppose that $\pi$ is $\psi_E$-generic. 
If $2m < c(\phi_\pi)$, then $\pi_{\psi}^{K_{2m}^W} = 0$. 
If $2m = c(\phi_\pi)$ or $2m = c(\phi_\pi)+1$, then 
\[
\dim_\C(\pi_{\psi}^{K_{2m}^W}) \leq 1.
\]

\item
Set $2m = c(\phi_\pi)$ or $2m = c(\phi_\pi)+1$.
Suppose that $\pi$ is $\psi_E$-generic and that $\omega_\pi$ is trivial on $E^1 \cap (1+\pp_E^m)$.
Then $\pi_\psi^{K_{2m}^W} \not= 0$. 
\end{enumerate}
\end{thm}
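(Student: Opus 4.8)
The plan is to prove the three assertions via three largely independent inputs: the local Gan--Gross--Prasad period integrals for (1), the Rankin--Selberg integrals for $\U_{2n}\times\GL_{n-1}(E)$ for (2), and the local theta correspondence with $\U_{2n+1}$ for (3). For (1), when $\pi$ is not $\psi_E$-generic the Moy--Prasad-type lemma used by Tsai is unavailable because the Jacobi group $J_{n-1}=H_{n-1}\rtimes\U_{2n-2}$ is not reductive; instead I would use the absolute convergence of the local Fourier--Jacobi period integrals attached to the refined GGP conjecture for $(\U_{2n},\U_{2n-2})$ and feed it into the argument of Gan--Savin \cite[Lemma 12.5]{GS} (as in \cite[Theorem 4.5]{AOY}) to force the image in $\pi_\psi$ of any $K_{2m}^W$-fixed vector to vanish, so $\pi_\psi^{K_{2m}^W}=0$ for all $2m\geq0$; the converse, for $\psi_E$-generic $\pi$, follows from the existence argument in (3), applied with $m$ large enough that $\omega_\pi$ is trivial on $E^1\cap(1+\pp_E^m)$. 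For (2), with $\pi$ now $\psi_E$-generic, I would attach to each $K_{2m}^W$-fixed vector a formal power series built from the zeta integrals for $\U_{2n}\times\GL_{n-1}(E)$ of Ben-Artzi--Soudry \cite{BS} and Morimoto \cite{Mori} (Theorem \ref{RS}), use the multiplicativity of the gamma factor \cite[Theorem 3.1]{Mori} together with the degree computation of Lemma \ref{degree} to bound the degree of the numerator, and then invoke the functional equation: it forces the series to vanish for $2m<c(\phi_\pi)$ and to span a space of dimension $\leq1$ for $2m=c(\phi_\pi)$ or $c(\phi_\pi)+1$. Because this zeta integral factors through $\pi\twoheadrightarrow\pi_\psi$, the resulting bound is a bound on $\dim_\C\pi_\psi^{K_{2m}^W}$ --- which is precisely the claim --- and does not control $\dim_\C\pi^{K_{2m}^W}$ itself.

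For (3), the new part, I would pass to the theta lift $\sigma=\theta_\psi(\pi)$ to $\U(V)\cong\U_{2n+1}$. By compatibility of the local theta correspondence with Mok's local Langlands correspondence \cite{Mok} and with $\ep$-factors, $\sigma$ is nonzero, irreducible, tempered and generic, with $c(\phi_\sigma)=c(\phi_\pi)$ and $\omega_\sigma=\omega_\pi$; thus the central-character hypothesis transports to $\sigma$, and Theorem \ref{odd} gives $\sigma^{J_{2m}^V}\neq0$ for $2m=c(\phi_\sigma)$ or $c(\phi_\sigma)+1$, where $J_{2m}^V$ is generated by a conjugate of $K_{2m}^V$ and by $E^1\cap(1+\pp_E^m)$. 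Fixing the canonical surjection $\omega_\psi\twoheadrightarrow\sigma\boxtimes\pi$ of the Weil representation, I would first show, in a lattice model and using Waldspurger's computation of congruence-subgroup invariants (Proposition \ref{WK}), that $\omega_\psi^{J_{2m}^V}$ is generated as a $\U(W)$-module by $\omega_\psi^{J_{2m}^V\times K_{2m}^W}$; together with $\sigma^{J_{2m}^V}\neq0$ this already yields $\pi^{K_{2m}^W}\neq0$ (Proposition \ref{K-fixed}).

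The harder step is to upgrade this to $\pi_\psi^{K_{2m}^W}\neq0$. Fix a Whittaker functional $l_\sigma$ on $\sigma$ that is nonzero on the line $\sigma^{J_{2m}^V}$ --- possible by Cheng's explicit newform computation \cite[Theorem 1.4, Lemma 7.5]{Cheng} --- and form the composite $\omega_\psi\to\sigma\boxtimes\pi\xrightarrow{l_\sigma\otimes\id}\pi$. As in Mao--Rallis \cite[Proposition 2.3]{MR}, this factors through a twisted Jacquet module of $\omega_\psi$ along a maximal unipotent subgroup of $\U(V)$, which one identifies with a compact induction $\ind_{N_{2n}'}^{\U_{2n}}(\mu)$ for a generic character $\mu$ of a maximal unipotent $N_{2n}'$ of $\U_{2n}$, the induced $\U_{2n}$-map to $\pi$ being a normalization of evaluation at $1$ followed by $\pi\twoheadrightarrow\pi_\psi$. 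Choosing $\phi\in\omega_\psi^{J_{2m}^V\times K_{2m}^W}$ that is nonzero under $l_\sigma\otimes\id$, everything reduces to showing that the support of its image $f_\phi\in\ind_{N_{2n}'}^{\U_{2n}}(\mu)$ is small enough that $f_\phi(1)\neq0$; this is Lemma \ref{supp}, and it then produces a $K_{2m}^W$-fixed vector with nonzero image in $\pi_\psi$.

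The main obstacle will be Lemma \ref{supp}. Establishing the support estimate forces one to redo the Mao--Rallis description of the twisted Jacquet module of $\omega_\psi$ in a model adapted to $K_{2m}^W$, hence to pass between the lattice and a Schr\"odinger model and to track precisely how the congruence conditions defining $K_{2m}^V$ and $K_{2m}^W$ interact with the partial Fourier transforms relating these models, with the Heisenberg group $H_{n-1}$ sitting in the middle. It is exactly this non-reductivity of the Jacobi group and the Heisenberg-group bookkeeping that make the $\mathrm{SO}_{2n+1}$ and $\U_{2n+1}$ strategies of \cite{Ts,AOY} break down here, so the support lemma is where the genuinely new work lies.
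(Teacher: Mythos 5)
Your overall architecture matches the paper: Fourier--Jacobi periods plus the Gan--Savin argument for the vanishing half of (1), Rankin--Selberg zeta integrals with the degree estimate and functional equation for (2), and the theta correspondence to $\U_{2n+1}$ with the lattice-model computation (Waldspurger's Proposition, the Mao--Rallis description of the twisted Jacquet module, and the support lemma) for (3). You also correctly identify why the $\mathrm{SO}_{2n+1}$ and $\U_{2n+1}$ strategies break: the Jacobi group is not reductive, so one cannot use the Moy--Prasad lemma and must instead route the period argument through absolute convergence. Parts (2) and (3) of your outline agree with the paper's proof in both structure and in the identification of the load-bearing lemmas.

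However, there is a genuine gap in how you handle the converse direction of (1). You propose to deduce the existence of some $2m$ with $\pi_\psi^{K_{2m}^W}\neq 0$ from (3), "applied with $m$ large enough that $\omega_\pi$ is trivial on $E^1\cap(1+\pp_E^m)$." But (3) --- and its proof via the theta lift --- is pinned to the two values $2m=c(\phi_\pi)$ or $c(\phi_\pi)+1$: the factorization through the projection $\Sc(A)_{M_{2m}}\twoheadrightarrow\Sc(A)_{M_{2m}\setminus M_{2m-2}}$ in Proposition \ref{K-fixed} uses $\sigma^{K_{2m-2}^V}=0$, which fails once $2m-2\geq c(\phi_\sigma)$, and Cheng's nonvanishing of the Whittaker functional on $\sigma^{J_{2m}^V}$ is a statement about the one-dimensional newform line. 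So (3) cannot simply be fed a larger $m$. If the central character $\omega_\pi$ happens not to be trivial on $E^1\cap(1+\pp_E^m)$ for $m=\lceil c(\phi_\pi)/2\rceil$, your chain of implications produces nothing --- and that triviality is a genuine hypothesis, not automatic (it is exactly the obstruction recorded in the Remark after Theorem \ref{main}). The paper sidesteps this entirely: the converse of (1) is proved directly from the local GGP conjecture, by picking a nonzero $\LL\in\Hom_{J_{n-1}}(\pi\otimes\pi'\otimes\overline{\omega_\psi},\C)$ for a suitable unramified tempered $\pi'$, choosing $\varphi\in\pi$ fixed by $K^J=K_{2m}^W\cap J_{n-1}$ with $\LL(\varphi\otimes\varphi_0'\otimes\overline{\phi_0})\neq0$, and then invoking smoothness to get $\varphi\in\pi^{K_{2m}^W}$ for $2m\gg 0$. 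This argument makes no appeal to (3) and imposes no condition on $\omega_\pi$. You should replace your deduction-from-(3) by this direct smoothness argument, or else prove an extension of (3) to all sufficiently large even $2m$ --- which is a nontrivial additional task, not a corollary of the statement as given.
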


When $2m = c(\phi_\pi)$, 
we shall call an element in $\pi^{K_{2m}^W}$ whose image in $\pi_\psi$ is nonzero
a \emph{local newform} of $\pi$.

\section{Local Fourier--Jacobi periods}\label{sec.FJ}
In this section, we will prove Theorem \ref{even} (1). 
To do this, we use the local Gan--Gross--Prasad conjecture for $(\U_{2n}, \U_{2n-2})$.

\subsection{Weil representation}
Let $W_0$ be the subspace of $W$ 
generated by $\{f_{n-1},\dots,f_1, f_{-1}, \dots, f_{-n+1}\}$.
We write $G_n = \U(W)$ and $G_{n-1} = \U(W_0)$ in this section.
Hence the Jacobi group $J_{n-1}$ is written as $J_{n-1} = H_{n-1} \rtimes G_{n-1}$. 
\par

Recall that we have a compact subgroup $K_{2m}^W$ of $G_n = \U(W)$. 
Note that the intersections
\[
K^J = K_{2m}^{W} \cap J_{n-1},
\quad
K^H = K_{2m}^{W} \cap H_{n-1},
\quad
K^{W_0} = K_{2m}^{W} \cap \U(W_0)
\]
are independent of $2m$. 
Moreover, $K^{W_0}$ is a hyperspecial maximal compact subgroup of $G_{n-1} = \U(W_0)$.
\par

We consider the Weil representation $\omega_\psi$ of $J_{n-1}$ associated to $\psi$ and $\chi$.
It is realized on the Schwartz space $\Sc(E^{n-1})$ as follows. 
For $\phi \in \Sc(E^{n-1})$ and $\xi \in E^{n-1}$, 
\begin{align*}
&\omega_\psi(\bv(x,0;0))\phi(\xi) = \phi(\xi+x), \quad x \in E^{n-1}, \\
&\omega_\psi(\bv(0,y;0))\phi(\xi) = \psi_E(2\xi w_{n-1} {}^t\overline{y})\phi(\xi), \quad y \in E^{n-1}, \\
&\omega_\psi(\bv(0,0;z))\phi(\xi) = \psi(z) \phi(\xi), \quad z \in F, \\
&\omega_\psi(\bm(a))\phi(\xi) = \chi(\det(a)) |\det(a)|^{\half{1}} \phi(\xi a), \quad a \in \GL_{n-1}(E), \\
&\omega_\psi(\bn(b))\phi(\xi) = \psi_E\left(\overline{\xi} \overline{b} w_{n-1} {}^t \xi \right)\phi(\xi), 
\quad b \in \M_{n-1}(E), {}^t(w_{n-1}\overline{b}) = w_{n-1}b, \\
&\omega_\psi (J_{2n-2})\phi(\xi) 
= 
\int_{E^{n-1}} \phi(x) \psi_E(2\overline{x} \cdot {}^t\xi) dx,
\end{align*}
where we set 
\[
\bm(a) = \begin{pmatrix}
a & 0 \\ 
0 & w_{n-1}{}^t\overline{a}^{-1}w_{n-1}^{-1}
\end{pmatrix}, 
\quad
\bn(b) = \begin{pmatrix}
1 & b \\ 0 & 1
\end{pmatrix} \in G_{n-1}, 
\]
and the measure $dx$ on $E^{n-1}$ is the self-dual Haar measure with respect to $\psi_E$. 
The Weil representation $\omega_\psi$ is unitary with respect to 
the pairing 
\[
(\phi_1,\phi_2) = \int_{E^{n-1}}\phi_1(\xi) \overline{\phi_2(\xi)} d\xi.
\]
\par

Set $\phi_0 \in \Sc(E^{n-1})$ to be the characteristic function on $\oo_E^{n-1}$. 
Note that $\phi$ is fixed by $\omega_{\psi}(K^J)$. 
Moreover, the subspace $\omega_{\psi}^{K^H}$ is one dimensional spanned by $\phi_0$.

\subsection{Proof of Theorem \ref{even} (1)}\label{sec.proof1}
Let $\pi \in \Irr_\temp(G_n)$ and $\pi' \in \Irr_\temp(G_{n-1})$. 
Fix a nonzero $G_n$-invariant (\resp $G_{n-1}$-invariant) bilinear pairing 
$(\cdot, \cdot)_{\pi} \colon \pi \times \pi^\vee \rightarrow \C$
(\resp $(\cdot, \cdot)_{\pi'} \colon \pi' \times \pi'^\vee \rightarrow \C$).
For $\varphi \in \pi, \varphi^\vee \in \pi^\vee, \varphi' \in \pi', \varphi'^\vee \in \pi'^\vee$ 
and $\phi, \phi^\vee \in \Sc(E^{n-1})$, 
we define the \emph{local Fourier--Jacobi period} by 
\begin{align*}
&\alpha(\varphi, \varphi^\vee, \varphi', \varphi'^\vee, \phi, \phi^\vee)
\\&= \int_{G_{n-1}}\int_{H_{n-1}} (\pi(hg)\varphi, \varphi^\vee)_{\pi} (\pi'(g)\varphi', \varphi'^\vee)_{\pi'}
\overline{(\omega_\psi(hg) \phi, \phi^\vee)} dhdg.
\end{align*} 

\begin{prop}\label{converge}
The integral $\alpha(\varphi, \varphi^\vee, \varphi', \varphi'^\vee, \phi, \phi^\vee)$
is absolutely convergent. 
\end{prop}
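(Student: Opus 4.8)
The goal is to show absolute convergence of the Fourier--Jacobi period integral over $G_{n-1} \times H_{n-1}$. The standard strategy, due to Ichino--Ikeda and adapted to the Fourier--Jacobi case by Xue, is to bound each of the three factors in the integrand using known asymptotics of matrix coefficients of tempered representations, and then to check that the product is integrable against the Haar measure. First I would reduce the Heisenberg integration: the center $Z \cong F$ of $H_{n-1}$ is compact modulo the support, because $\overline{(\omega_\psi(\bv(0,0;z)g)\phi, \phi^\vee)} = \psi(-z)\overline{(\omega_\psi(g)\phi,\phi^\vee)}$ while $(\pi(\bv(0,0;z)hg)\varphi, \varphi^\vee)_\pi$ is $\psi(z)$-equivariant in the $Z$-variable too, so the $z$-integral is over a character times a compactly supported function — actually the $Z$-parts cancel and one is left integrating over $H_{n-1}/Z \cong E^{2n-2}$ against $G_{n-1}$.

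**Main estimates.** On $H_{n-1}/Z \cong X_{n-1} \times Y_{n-1}$, the Weil-representation matrix coefficient $\overline{(\omega_\psi(\bv(x,y;0)g)\phi,\phi^\vee)}$ is a Schwartz function in $(x,y)$ (uniformly for $g$ in a compact set, and with controlled behavior as $g \to \infty$ by the $\GL_{n-1}$-action formula $\omega_\psi(\bm(a))\phi(\xi) = \chi(\det a)|\det a|^{1/2}\phi(\xi a)$), which gives rapid decay in the $H_{n-1}$-directions. For the $G_{n-1}$-integration, I would invoke the Harish-Chandra/Cartan decomposition $G_{n-1} = K^{W_0} A^+ K^{W_0}$ and the fact that for tempered $\pi'$ the matrix coefficient $(\pi'(g)\varphi',\varphi'^\vee)_{\pi'}$ is bounded by $\Xi^{G_{n-1}}(g)$ up to a constant, and likewise $(\pi(hg)\varphi,\varphi^\vee)_\pi$ is bounded (again by temperedness of $\pi$, restricting the Harish-Chandra function of $G_n$ to the image of $G_{n-1}$) by a constant multiple of $\Xi^{G_{n-1}}(g)$ times something rapidly decaying in the $H_{n-1}$-coordinates. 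Then the $g$-integral of $\Xi^{G_{n-1}}(g)^2$ against the modulus character diverges only logarithmically, but the extra rapid decay coming from the Weil representation — the key point that $\omega_\psi$ restricted to $G_{n-1}$ is \emph{not} tempered but the relevant coefficient decays faster — supplies the missing $\epsilon$. Concretely, on the split torus $A^+ = \{\diag(t_1,\dots)\}$ the Weil coefficient contributes decay like $\prod |t_i|^{-N}$ for all $N$ after integrating out the $X_{n-1}$-variable, which dominates the polynomial growth in $\log\|g\|$ coming from $\Xi^2$.

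**Organization.** So the steps are: (i) cancel the $Z$-integration and reduce to $H_{n-1}/Z \times G_{n-1}$; (ii) for $g$ in a fixed compact set, observe the $(x,y)$-integrand is Schwartz, so the inner integral is finite and, as a function of $g$, is smooth of rapid decay in the appropriate sense; (iii) bound $|(\pi(hg)\varphi,\varphi^\vee)_\pi|$ and $|(\pi'(g)\varphi',\varphi'^\vee)_{\pi'}|$ by Harish-Chandra $\Xi$-functions using temperedness, with an explicit dependence on the $h$-coordinates via the formulas for $\omega_\psi$; (iv) combine with the Weil-representation decay and integrate over $G_{n-1}$ using the Cartan decomposition and the standard integrability estimate $\int_{G_{n-1}} \Xi^{G_{n-1}}(g)^2 \sigma(g)^{-\ell}\,dg < \infty$ for $\ell$ large, where the required power of the length function $\sigma$ is furnished by the Schwartz decay of the theta factor. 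The main obstacle I anticipate is step (iv): controlling how the $H_{n-1}$-integration interacts with the $G_{n-1}$-growth, i.e.\ making the heuristic ``the Weil part decays fast enough to beat $\Xi^2$'' into a genuine uniform estimate. This is precisely the technical heart of the Fourier--Jacobi analogue of Ichino--Ikeda, and I would either cite the relevant convergence lemma from the literature on the refined GGP conjecture (Xue, or Gan--Ichino) or reproduce its short argument; the rest is routine.
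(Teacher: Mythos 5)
The paper disposes of this proposition in one line, by citing Xue's Proposition~2.2.1, which treats the symplectic--metaplectic Fourier--Jacobi case; the present unitary case is formally identical. Your outline is an attempt to reconstruct that argument rather than cite it, which is a reasonable thing to try, but step~(i) contains a genuine error that invalidates the reduction you base everything on.

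You assert that $(\pi(\bv(0,0;z)hg)\varphi,\varphi^\vee)_\pi$ is $\psi(z)$-equivariant in $z$, and conclude that the $Z$-contributions cancel so that the integration effectively happens over $H_{n-1}/Z \times G_{n-1}$. This is false. The subgroup $Z$ is a \emph{non-central unipotent} subgroup of $\U_{2n}$, and $\pi$ is a representation of the full group $\U_{2n}$; the operator $\pi(\bv(0,0;z))$ is not the scalar $\psi(z)$. (The quotient $\pi_\psi$ is defined precisely to force that relation, but $\pi$ itself does not satisfy it, and it is important for the whole newform theory that it does not.) Consequently the $z$-integral is a genuine integral over the non-compact group $F$, of the matrix coefficient of $\pi$ against the character $\psi(-z)$ coming from the conjugated Weil factor. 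The Weil-representation coefficient contributes \emph{no decay at all} in the $z$-direction --- on $Z$ it is exactly $\psi(z)$ times a constant, hence of constant modulus --- so the burden of making the $z$-integral converge falls entirely on the decay of the matrix coefficient of $\pi$ along $Z$, which must be extracted from temperedness. This is precisely the delicate point of the proposition, and your step~(i) dismisses it as a cancellation.

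Once you recognize that the entire $H_{n-1}$-integral, including the $Z$-direction, has to be controlled by a temperedness estimate on $\pi$ restricted to the Jacobi group, the remainder of your plan (Cartan decomposition of $G_{n-1}$, $\Xi^{G_{n-1}}$-bounds for the coefficients of $\pi'$, rapid decay of the theta coefficient in the $X_{n-1}\times Y_{n-1}$-directions, combination with the standard $L^{2+\epsilon}$ integrability of $\Xi^{G_{n-1}}$ against a power of the length function) is the correct overall shape of Xue's argument. The lemma you are implicitly reaching for is a bound of the form $\int_{H_{n-1}}\lvert(\pi(hg)\varphi,\varphi^\vee)_\pi\rvert\,dh \ll \Xi^{G_{n-1}}(g)\,\sigma(g)^{D}$ for some $D\geq 0$, which packages the $H_{n-1}$-integration (including $Z$) into a form the $G_{n-1}$-integral can absorb. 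You correctly flag step~(iv) as the technical heart; but as written, your step~(i) prevents the argument from getting off the ground, because it asserts the hard part has already vanished.
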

\begin{proof}
This is exactly the same as the symplectic-metaplectic case (\cite[Proposition 2.2.1]{X}). 
We omit the details. 
\end{proof}

Since the central character of $\omega_\psi$ is $\psi$, 
if $\alpha(\varphi, \varphi^\vee, \varphi', \varphi'^\vee, \phi, \phi^\vee) \not= 0$, 
then 
\[
\int_{F}(\pi(hg \cdot \bv(0,0;z)) \varphi, \varphi^\vee)_\pi \overline{\psi(z)} dz \not= 0
\]
for some $h \in H_{n-1}$ and $g \in G_{n-1}$.
This means that the image of $\varphi$ in $\pi_\psi$ is nonzero.
The converse holds in the following sense. 

\begin{lem}\label{vee}
Let $\varphi \in \pi$. 
Assume that the image of $\varphi$ in $\pi_\psi$ is nonzero.
Then there exists $\varphi^\vee \in \pi^\vee$ such that 
\[
\int_{F}(\pi(\bv(0,0;z)) \varphi, \varphi^\vee)_\pi \overline{\psi(z)} dz \not= 0.
\]
\end{lem}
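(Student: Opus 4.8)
The plan is to reduce the statement to the non-vanishing of the canonical pairing $(\cdot,\cdot)_\pi$ after passing to the twisted Jacquet module along $Z$. First I would recall that $\pi_\psi$ is the twisted Jacquet module $\pi_{Z,\psi}$, i.e. the maximal quotient on which $Z\cong F$ acts by $\psi$; concretely $\pi_\psi = \pi/\pi(Z,\psi)$ where $\pi(Z,\psi)$ is spanned by vectors $\pi(\bv(0,0;z))\varphi' - \psi(z)\varphi'$. Dually, $(\pi^\vee)_{Z,\psi^{-1}}$ is a quotient of $\pi^\vee$, and since $\pi^\vee$ is the smooth contragredient, the pairing $(\cdot,\cdot)_\pi$ descends to a pairing between $\pi_{Z,\psi}$ and $(\pi^\vee)_{Z,\psi^{-1}}$. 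The key point is that this descended pairing is nondegenerate: a vector in $\pi_{Z,\psi}$ pairing to zero against all of $(\pi^\vee)_{Z,\psi^{-1}}$ would, by the duality between Jacquet modules and their contragredients (for the abelian — hence unimodular — group $Z$, this is the standard fact that $(\pi_{Z,\psi})^\vee \cong (\pi^\vee)_{Z,\psi^{-1}}$, see e.g. \cite{MVW} or the elementary argument via smooth duals), have to be zero in $\pi_{Z,\psi}$.

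Granting this, the argument runs as follows. Let $\varphi\in\pi$ have nonzero image $\overline\varphi$ in $\pi_\psi = \pi_{Z,\psi}$. By the nondegeneracy just discussed, there is a functional on $\pi_{Z,\psi}$ not killing $\overline\varphi$, realized as pairing against some $\overline{\varphi^\vee}\in (\pi^\vee)_{Z,\psi^{-1}}$, which we lift to $\varphi^\vee\in\pi^\vee$. Unwinding the identification of the descended pairing, the value $\langle \overline\varphi, \overline{\varphi^\vee}\rangle$ is computed (up to a nonzero constant coming from the choice of Haar measure on $F$ and the fact that $\psi$ is nontrivial, so the relevant integral is an honest projection onto the $\psi$-isotypic piece) exactly by
\[
\int_F (\pi(\bv(0,0;z))\varphi,\varphi^\vee)_\pi\,\overline{\psi(z)}\,dz,
\]
where the integral is taken in the sense of the (finite, since $\varphi^\vee$ is smooth) averaging against $\overline{\psi(z)}$ over a large enough compact open subgroup of $F$, normalized so that it represents the Jacquet-module projection. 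Hence this integral is nonzero, which is the claim.

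The main obstacle I anticipate is purely bookkeeping rather than conceptual: one must be careful that the integral $\int_F(\pi(\bv(0,0;z))\varphi,\varphi^\vee)_\pi\overline{\psi(z)}dz$ genuinely converges and genuinely represents the pairing on the twisted Jacquet module. Because $\psi$ is a nontrivial character of $F$ and matrix coefficients of $\pi$ are locally constant, for fixed $\varphi,\varphi^\vee$ the function $z\mapsto (\pi(\bv(0,0;z))\varphi,\varphi^\vee)_\pi$ is smooth, and it suffices to show the average against $\overline{\psi(z)}$ over $\pp_F^{-N}$ stabilizes for large $N$ and equals the image pairing; this is where one invokes smoothness of $\varphi^\vee$ to get a compact open subgroup of $Z$ fixing it, combined with the description of $\pi(Z,\psi)$ as the kernel of this averaging operator in the limit. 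Once the normalization of $dz$ is pinned down so that the averaging operator is idempotent on the $\psi$-eigenspace, the non-vanishing is immediate from the choice of $\varphi^\vee$. I would state this convergence/idempotency as a short preliminary observation and then give the three-line deduction above.
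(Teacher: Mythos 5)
Your proposal shares the basic shape of the paper's argument — realize the claim as non-degeneracy of a pairing on the twisted Jacquet module — but two of the steps you treat as routine are exactly where the content of the paper's proof lies, and one of them is actually wrong as you state it.

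The concrete gap is the claimed ``stabilization.'' You say the integral over $F$ is ``finite, since $\varphi^\vee$ is smooth,'' i.e.\ that it reduces to an average over a compact open subgroup $\pp_F^{-N}$ and stabilizes for large $N$. Smoothness of $\varphi^\vee$ only gives that the matrix coefficient $z \mapsto (\pi(\bv(0,0;z))\varphi,\varphi^\vee)_\pi$ is locally constant on $Z\cong F$; it says nothing about its support. For $\pi$ tempered but not supercuspidal this coefficient need not be compactly supported on $Z$, so the truncated integrals $\int_{\pp_F^{-N}}$ do \emph{not} stabilize — they merely converge as $N\to\infty$, and establishing that convergence is precisely the role of Proposition~\ref{converge} (absolute convergence of the local Fourier--Jacobi period), which your proposal never invokes. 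The paper's proof opens with ``By Proposition~\ref{converge}, the map $\pi \to (\pi^\vee)^*$ [\dots] is well-defined,'' and this is load-bearing. Without it, your formula for the descended pairing is not even defined.

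Your second step — the ``standard fact'' $(\pi_{Z,\psi})^\vee \cong (\pi^\vee)_{Z,\psi^{-1}}$ with the pairing ``descending'' from $(\cdot,\cdot)_\pi$ — is also not as routine as you present it. The natural pairing on $\pi\times\pi^\vee$ does not descend to the Jacquet quotients; what could descend is an averaged/integrated pairing, and that again presupposes convergence. Moreover, $Z$ is a single abelian piece sitting inside the unipotent radical $H_{n-1}$ of the Heisenberg parabolic, not a full unipotent radical of a parabolic, so Casselman-type duality (which in any case involves an opposite unipotent and a genuine theorem, not ``the elementary argument via smooth duals'') does not apply off the shelf. The paper avoids having to identify $(\pi^\vee)_{Z,\psi^{-1}}$ at all: it maps $\pi$ into the \emph{full} linear dual $(\pi^\vee)^*$ via the (now well-defined) integral, observes this factors through $\pi_\psi$, and asserts the induced map $\pi_\psi\hookrightarrow(\pi^\vee)^*$ is injective. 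That is the right target and the cheaper route; your version both adds an unproved duality and omits the convergence input that makes any of it meaningful. To repair the proposal you would need to (i) replace the stabilization claim by an appeal to Proposition~\ref{converge}, and (ii) either justify the claimed Jacquet-module duality for $Z$ or, as the paper does, work directly with $(\pi^\vee)^*$.
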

\begin{proof}
Denote by $(\pi^\vee)^*$ the linear dual of $\pi^\vee$. 
Then $\pi$ is regarded as a subspace of $(\pi^\vee)^*$ via $(\cdot, \cdot)_\pi$.
By Proposition \ref{converge}, 
the map
\[
\pi \rightarrow (\pi^\vee)^*,\;
\varphi \mapsto 
\left[
\varphi^\vee \mapsto \int_{F}(\pi(\bv(0,0;z)) \varphi, \varphi^\vee)_\pi \overline{\psi(z)} dz
\right]
\]
is well-defined. 
It induces an injection $\pi_\psi \hookrightarrow (\pi^\vee)^*$.
Hence the assertion is proven.
\end{proof}

Now, we prove Theorem \ref{even} (1). 
\begin{proof}[Proof of Theorem \ref{even} (1)]
Let $\pi$ be an irreducible tempered representation of $G_n = \U_{2n}$. 
Suppose that $\pi_{\psi}^{K_{2m}^W} \not= 0$ for some $2m \geq 0$. 
We will show that $\pi$ must be $\psi_E$-generic.
\par

Fix $\varphi \in \pi^{K_{2m}^W}$ such that the image of $\pi_{\psi}$ is nonzero.
By Lemma \ref{vee}, one can find $\varphi^\vee \in \pi^\vee$ such that 
\[
\int_{F}(\pi(\bv(0,0;z)) \varphi, \varphi^\vee)_\pi \overline{\psi(z)} dz \not= 0.
\]
Since $Z$ is the center of $H_{n-1}$, 
we may assume that $\varphi^\vee$ is fixed by $K^H$.
Hence the matrix coefficient 
$H_{n-1} \ni h \mapsto (\pi(h) \varphi, \varphi^\vee)_\pi$ is bi-$K^H$-invariant. 
Since $\omega_{\psi}$ is the unique irreducible representation of $H_{n-1}$
whose central character is $\psi$, 
there are $\phi, \phi^\vee \in \Sc(E^{n-1})$ such that 
\[
\int_{H_{n-1}} (\pi(h) \varphi, \varphi^\vee)_\pi 
\overline{(\omega_{\psi}(h)\phi, \phi^\vee)} dh \not= 0.
\]
We may also assume that both $\phi$ and $\phi^\vee$ are fixed by $K^H$.
Since $\omega_{\psi}^{K^H} = \C \phi_0$, 
we can take $\phi = \phi^\vee = \phi_0$.
Hence
\[
\int_{H_{n-1}} (\pi(h) \varphi, \varphi^\vee)_\pi 
\overline{(\omega_{\psi}(h)\phi_0, \phi_0)} dh \not= 0.
\]
\par

Now by applying the same argument as \cite[Lemma 12.5]{GS} to the integral on $G_{n-1}$, 
one can find $\pi' \in \Irr_\temp(G_{n-1})$ and $(\varphi', \varphi'^\vee) \in \pi' \times \pi'^\vee$ such that 
\[
\alpha(\varphi, \varphi^\vee, \varphi', \varphi'^\vee, \phi_0, \phi_0) \not= 0.
\]
We may assume that $\varphi'$ is fixed by $K^{W_0}$ since so are $\varphi$ and $\phi_0$.
This means that $\pi'$ is unramified. 
By the local Gan--Gross--Prasad conjecture (\cite[Conjecture 17.3, Theorem 19.1]{GGP}), 
whose basic case is proven by Gan--Ichino \cite[Theorem 1.3]{GI}, 
we can deduce that $\pi$ is $\psi_E$-generic.
\par

Conversely, if $\pi$ is $\psi_E$-generic, 
by the local Gan--Gross--Prasad conjecture, 
one can find an irreducible tempered unramified representation $\pi'$ of $G_{n-1}$
such that $\Hom_{J_{n-1}}(\pi \otimes \pi' \otimes \overline{\omega_{\psi}}, \C) \not= 0$.
Since $\pi'$ and $\omega_\psi$ are irreducible as representations of $G_{n-1}$ and $H_{n-1}$, respectively,
for any nonzero unramified vector $\varphi'_0 \in \pi'$ and 
for any nonzero element $\LL \in \Hom_{J_{n-1}}(\pi \otimes \pi' \otimes \overline{\omega_{\psi}}, \C)$, 
one can take $\varphi \in \pi$ such that 
$\LL(\varphi \otimes \varphi_0' \otimes \overline{\phi_0}) \not= 0$.
We may assume that $\varphi$ is fixed by $K^{J}$. 
Since $\pi$ is smooth, $\varphi$ is fixed by $K_{2m}^{W}$ for $2m \gg 0$. 
In this case $\varphi$ gives a nonzero element in $\pi_\psi^{K_{2m}^W}$.
\par

This completes the proof of Theorem \ref{even} (1). 
\end{proof}

Recall in \cite[Corollary 16.3]{GGP} that 
for $\pi \in \Irr(G_n)$ and $\pi' \in \Irr(G_{n-1})$, we have
\[
\dim_\C \Hom_{J_{n-1}}(\pi \otimes \pi' \otimes \overline{\omega_\psi}, \C) \leq 1.
\]
It is worth to state the following result which was obtained by the above argument.

\begin{prop}\label{ggp}
Let $\pi$ be an irreducible tempered representation of $G_n$. 
Suppose that there is $\varphi \in \pi^{K_{2m}^W}$ whose image in $\pi_{\psi}$ is nonzero for some $2m \geq 0$.
Then there exist
\begin{itemize}
\item
an irreducible tempered unramified representation $\pi'$ of $G_{n-1}$; 
\item
an unramified vector $\varphi'_0 \in \pi'$; and 
\item
$\LL \in \Hom_{J_{n-1}}(\pi \otimes \pi' \otimes \overline{\omega_{\psi}}, \C)$
\end{itemize}
such that $\LL(\varphi \otimes \varphi'_0 \otimes \overline{\phi_0}) \not= 0$.
\end{prop}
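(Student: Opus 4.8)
The plan is to observe that Proposition \ref{ggp} is not a new statement but rather a packaging of the intermediate steps already carried out in the first half of the proof of Theorem \ref{even} (1). So the strategy is simply to extract and reorganize that argument, keeping careful track of the fixed-vector conditions. First I would start from the hypothesis: fix $\varphi \in \pi^{K_{2m}^W}$ whose image in $\pi_\psi$ is nonzero. Applying Lemma \ref{vee}, I obtain $\varphi^\vee \in \pi^\vee$ with $\int_F (\pi(\bv(0,0;z))\varphi,\varphi^\vee)_\pi \overline{\psi(z)}\, dz \neq 0$, and since $Z$ is central in $H_{n-1}$ I may average $\varphi^\vee$ over $K^H$ to assume it is $K^H$-fixed, so that $h \mapsto (\pi(h)\varphi,\varphi^\vee)_\pi$ is bi-$K^H$-invariant on $H_{n-1}$.

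Next I would transfer the nonvanishing to the Weil representation: since $\omega_\psi$ is the unique irreducible representation of $H_{n-1}$ with central character $\psi$, matrix coefficients of $\omega_\psi$ span all bi-$K^H$-invariant functions with the appropriate central character behavior, so there exist $\phi,\phi^\vee \in \Sc(E^{n-1})$ — which, after averaging and using $\omega_\psi^{K^H} = \C\phi_0$, can be taken equal to $\phi_0$ — with $\int_{H_{n-1}} (\pi(h)\varphi,\varphi^\vee)_\pi \overline{(\omega_\psi(h)\phi_0,\phi_0)}\, dh \neq 0$. Then I would apply the Gan--Savin argument of \cite[Lemma 12.5]{GS} to the remaining integral over $G_{n-1}$ in the definition of $\alpha$: the absolute convergence (Proposition \ref{converge}) lets one spectrally decompose, yielding some $\pi' \in \Irr_\temp(G_{n-1})$ and vectors $\varphi' \in \pi'$, $\varphi'^\vee \in \pi'^\vee$ with $\alpha(\varphi,\varphi^\vee,\varphi',\varphi'^\vee,\phi_0,\phi_0) \neq 0$. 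Since $\varphi$ and $\phi_0$ are both $K^{W_0}$-fixed, I can average $\varphi'$ over $K^{W_0}$ and keep the period nonzero, so $\pi'$ is unramified and $\varphi'_0 := \varphi'$ is an unramified vector.

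Finally I would unwind the local Fourier--Jacobi period into a Jacobi-invariant functional. The nonvanishing of $\alpha(\varphi,\varphi^\vee,\varphi'_0,\varphi'^\vee,\phi_0,\phi_0)$, read as an integral over $J_{n-1} = H_{n-1} \rtimes G_{n-1}$ of the product of matrix coefficients against $\overline{(\omega_\psi(hg)\phi_0,\phi_0)}$, produces a nonzero element $\LL \in \Hom_{J_{n-1}}(\pi \otimes \pi' \otimes \overline{\omega_\psi},\C)$ with $\LL(\varphi \otimes \varphi'_0 \otimes \overline{\phi_0}) \neq 0$; this is the standard construction of the GGP functional from the convergent period, and the uniqueness statement \cite[Corollary 16.3]{GGP} guarantees it is well-defined up to scalar. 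I expect the only genuinely delicate point to be the bookkeeping that the averaging operations over $K^H$ and $K^{W_0}$ preserve nonvanishing simultaneously with the chosen $\phi_0$; everything else is a direct transcription of the proof of Theorem \ref{even} (1), with the upshot repackaged as the existence of the triple $(\pi',\varphi'_0,\LL)$.
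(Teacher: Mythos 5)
Your proof is correct and follows precisely the same route the paper takes: the paper itself introduces Proposition \ref{ggp} with the remark that it "was obtained by the above argument," i.e.\ the proof of Theorem \ref{even} (1), which you have faithfully extracted. Your repackaging of the nonvanishing period $\alpha(\varphi,\varphi^\vee,\varphi'_0,\varphi'^\vee,\phi_0,\phi_0)$ as a $J_{n-1}$-invariant functional $\LL$, and your bookkeeping of the $K^H$- and $K^{W_0}$-averages (justified by right $K^{W_0}$-invariance of the inner $H_{n-1}$-integral), matches the intended argument exactly.
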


\section{Uniqueness}\label{sec.unique}
In this section, we will prove Theorem \ref{even} (2). 
As usual, this is an application of Rankin--Selberg integrals.
\par

\subsection{Rankin--Selberg integrals}\label{sec.RS}
Let $\tau$ be an irreducible generic representation of $\GL_{n-1}(E)$
which is realized on the Whittaker space $\WW(\tau,\psi_E^{-1})$ with respect to 
the inverse of $\psi_E$. 
For $s \in \C$, we consider the normalized parabolically induced representation 
\[
\Ind_{Q_{n-1}}^{G_{n-1}}\left(\tau |\det|^{s-\half{1}}\right)
\]
of $G_{n-1}$, 
where $Q_{n-1} = M_{n-1}U_{n-1}$ denotes the standard Siegel parabolic subgroup so that 
\begin{align*}
M_{n-1} &= \{\bm(a) \;|\; a \in \GL_{n-1}(E)\}, \\
U_{n-1} &= \{\bn(b) \;|\; b \in \M_{n-1}(E), {}^t(w_{n-1}\overline{b}) = w_{n-1}b\}. 
\end{align*}
We realize it on the space $V_{Q_{n-1}}^{G_{n-1}}(\WW(\tau,\psi_E^{-1}), s)$
of smooth functions $f_s \colon G_{n-1} \times \GL_{n-1}(E) \rightarrow \C$
such that 
\begin{itemize}
\item
$f_s(\bn(b) \bm(a) g,a') = |\det a|_E^{s+\half{n}-1} f_s(g,a'a)$
for $g \in G_{n-1}$, $a,a' \in \GL_{n-1}(E)$ and $\bn(b) \in U_{n-1}$; 
\item
the function $a \mapsto f_s(g,a)$ belongs to $\WW(\tau,\psi_E^{-1})$ for any $g \in G_{n-1}$. 
\end{itemize}
Define a new representation $\tau^*$ by $\tau^*(a) = \tau(a^*)$, 
where $a^* = w_{n-1}{}^t\overline{a}^{-1}w_{n-1}^{-1}$. 
Note that $\tau^* \cong \overline{\tau}^\vee$, 
where $\overline{\tau}(a) = \tau(\overline{a})$.
As in \cite[Section 2.3.1]{Mori}, one can define a normalized intertwining operator 
\[
M^*(\tau,s) \colon V_{Q_{n-1}}^{G_{n-1}}(\WW(\tau,\psi_E^{-1}), s)
\rightarrow V_{Q_{n-1}}^{G_{n-1}}(\WW(\tau^*,\psi_E^{-1}), 1-s).
\]
\par

Let $\pi$ be an irreducible $\psi_E$-generic representation of $G_n$
realized on the Whittaker space $\WW(\pi,\psi_E)$. 
For $W \in \WW(\pi,\psi_E)$, $f_s \in V_{Q_{n-1}}^{G_{n-1}}(\WW(\tau,\psi_E^{-1}), s)$ 
and $\phi \in \Sc(E^{n-1})$, 
we define the \emph{Rankin--Selberg integral} $\LL(W,f_s,\overline{\phi})$ by 
\[
\int_{N_{n-1} \bs G_{n-1}} \int_{E^{n-1}}
W(w_{1,n-1} \bv(x,0;0)g ) f_s(g,\1_{n-1}) \overline{\omega_\psi(g) \phi(x)} dx dg, 
\]
where we set 
\[
w_{1,n-1} = \left(\begin{array}{cc|cc}
&\1_{n-1}&&\\
1&&&\\
\hline
&&&1\\
&&\1_{n-1}&
\end{array}\right)
\in G_n.
\]

\begin{rem}\label{W(w)}
Note that 
\[
W(w_{1,n-1} \bv(x,0;0)g \cdot \bv(0,0;z)) = \psi(z)W(w_{1,n-1}\bv(x,0;0)g)
\]
for $W \in \WW(\pi, \psi_E)$. 
Hence the restriction map $W \mapsto W(w_{1,n-1} \bv(x,0;0)g)$
factors through $\pi \twoheadrightarrow \pi_\psi$. 
In particular, if $\pi$ is $\psi_E$-generic, then $\pi_\psi$ is nonzero.
\end{rem}

\begin{thm}\label{RS}
Keep the notations.
\begin{enumerate}
\item
The integral $\LL(W,f_s,\overline{\phi})$ converges absolutely for $\re(s) \gg 0$.
It is a rational function in $q^{-s}$ 
so that it admits a meromorphic continuation to the whole $s$-plane.

\item 
Let $I(\pi \times \tau \times \chi)$ be the fractional ideal of $\C[q^{-s}, q^{s}]$
generated by $\LL(W,f_s,\overline{\phi})$ 
for $W \in \WW(\pi,\psi_E)$, $f_s \in V_{Q_{n-1}}^{G_{n-1}}(\WW(\tau,\psi_E^{-1}), s)$ 
and $\phi \in \Sc(E^{n-1})$. 
Then there is a unique polynomial $P(X) \in \C[X]$ with $P(0)=1$
such that $I(\pi \times \tau \times \chi) = (P(q^{-s})^{-1})$. 
We define the \emph{$L$-function} attached to $\pi \times \tau$ and $\chi$ by 
\[
L(s, \pi \times \tau, \chi) = P(q^{-s})^{-1}.
\]

\item
There is a meromorphic function $\Gamma(s, \pi \times \tau, \psi)$
such that 
\[
\LL(W, M^*(\tau, s)f_s, \overline{\phi}) = 
\omega_\pi(-1)^{n-1} \omega_\tau(-1)^n
\Gamma(s, \pi \times \tau, \chi, \psi)
\LL(W,f_s,\overline{\phi})
\]
for any $W \in \WW(\pi,\psi_E)$, $f_s \in V_{Q_{n-1}}^{G_{n-1}}(\WW(\tau,\psi_E^{-1}), s)$ 
and $\phi \in \Sc(E^{n-1})$.
We call $\Gamma(s, \pi \times \tau, \chi, \psi)$ the \emph{gamma factor}
attached to $\pi \times \tau$, $\chi$ and $\psi$. 

\item
The gamma factor $\Gamma(s, \pi \times \tau, \chi, \psi)$ satisfies several properties 
(including the multiplicativity), 
which determine $\Gamma(s, \pi \times \tau, \chi, \psi)$ uniquely. 

\item
Define the $\ep$-factor attached to $\pi \times \tau$, $\chi$ and $\psi$ 
by 
\[
\ep(s, \pi \times \tau, \chi, \psi) 
= \Gamma(s, \pi \times \tau, \chi, \psi) 
\frac{L(s, \pi \times \tau, \chi)}{L(1-s, \pi^\vee \times \tau^\vee, \chi)}.
\]
Then it satisfies that 
\[
\ep(1-s, \pi \times \tau^*, \chi, \psi)\ep(s, \pi \times \tau, \chi, \psi) = 1.
\]
In particular, $\ep(s,\pi \times \tau, \chi, \psi) \in \C^\times (q^{-s})^\Z$.

\end{enumerate}
\end{thm}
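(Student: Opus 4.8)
The plan is to prove Theorem~\ref{RS} by assembling the pieces already developed in the literature, since all five assertions are essentially a repackaging of the Rankin--Selberg theory for $\U_{2n} \times \GL_{n-1}(E)$ due to Ben-Artzi--Soudry \cite{BS} and Morimoto \cite{Mori}. First I would treat (1): the absolute convergence for $\re(s) \gg 0$ follows from the standard gauge estimates for Whittaker functions on $G_n$ together with the rapid decay of $\phi \in \Sc(E^{n-1})$; the rationality in $q^{-s}$ is obtained by the usual Bernstein argument (the integral, viewed as a function valued in the smooth dual, satisfies a system of equations over $\C[q^{-s},q^s]$ whose solutions are rational), exactly as in \cite[\S2]{BS} and \cite[\S2.3]{Mori}. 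For (2), the existence and uniqueness of the polynomial $P(X)$ with $P(0)=1$ generating the fractional ideal $I(\pi \times \tau \times \chi)$ is a formal consequence of (1): the ideal is nonzero (one exhibits a single choice of data making the integral a nonzero constant, e.g.\ by a standard ``Schwartz function supported near the identity'' computation) and every fractional ideal of $\C[q^{-s},q^s]$ is principal, generated by an element of the form $P(q^{-s})^{-1}$ with $P(0)=1$ after normalization.

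Next I would address (3) and (5), the functional equation and the $\ep$-factor. The functional equation is proved by applying the normalized intertwining operator $M^*(\tau,s)$ inside the integral and using the uniqueness of the relevant $G_n \times G_{n-1}$-equivariant functional (a Hom-space of dimension $\leq 1$, which underlies the whole Rankin--Selberg machinery) to conclude that $\LL(W, M^*(\tau,s)f_s, \overline{\phi})$ and $\LL(W,f_s,\overline{\phi})$ differ by a scalar meromorphic function of $s$; the normalizing constant $\omega_\pi(-1)^{n-1}\omega_\tau(-1)^n$ is computed by tracking the action of $-\1$ through the unfolded integral, as in \cite[Theorem~3.1]{Mori}. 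This defines $\Gamma(s,\pi \times \tau,\chi,\psi)$. Then (5) follows by applying (3) twice: once with $(\tau,s)$ and once with $(\tau^*,1-s)$, using $M^*(\tau^*,1-s) \circ M^*(\tau,s) = \id$ (the normalization of the intertwining operators is chosen precisely so that this holds), which forces $\ep(1-s,\pi\times\tau^*,\chi,\psi)\,\ep(s,\pi\times\tau,\chi,\psi)=1$; that the $\ep$-factor is a unit monomial in $q^{-s}$ then follows since it is a rational function of $q^{-s}$ that is its own inverse up to the substitution $s \mapsto 1-s$, hence has no zeros or poles.

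For (4), I would simply cite the fact that the gamma factor defined here agrees with Morimoto's, and that \cite[Theorem~3.1]{Mori} establishes the full list of characterizing properties — in particular the multiplicativity in both variables, the compatibility with the local Langlands correspondence via Mok's classification, and stability under highly ramified twists — which together pin down $\Gamma(s,\pi\times\tau,\chi,\psi)$ uniquely. I expect the main obstacle to be purely bookkeeping: reconciling the precise choices of measures, the model $V_{Q_{n-1}}^{G_{n-1}}(\WW(\tau,\psi_E^{-1}),s)$, the shift by $|\det|^{s-1/2}$, and the twisting character $\chi$ used here with the (slightly different) conventions in \cite{BS} and \cite{Mori}, so that the constant in the functional equation comes out exactly as stated; the substantive analytic and representation-theoretic content is already in those references and requires no new argument. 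No new ideas beyond careful transcription are needed, so I would keep the proof short and defer the technical verifications to \cite{BS,Mori}.
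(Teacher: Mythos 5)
Your proposal follows essentially the same route as the paper: (1) and (2) are imported from Ben-Artzi--Soudry (the paper cites their Propositions 6.4 and 6.5), (3) is deduced from the multiplicity one result of Gan--Gross--Prasad [Corollary 16.3], (4) is cited from Morimoto [Theorem 3.1], and (5) is the same short computation using $M^*(\tau^*,1-s)\circ M^*(\tau,s)=\id$ together with $\omega_{\tau^*}(-1)=\omega_\tau(-1)$. One small caution on your phrasing of (2): it is not true that every nonzero fractional ideal of $\C[q^{-s},q^s]$ has a generator of the form $P(q^{-s})^{-1}$ with $P$ a polynomial (e.g.\ the ideal $(q^{-s}-2)$ does not); what is actually needed is that $1\in I(\pi\times\tau\times\chi)$, so that a generator has polynomial inverse. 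Your parenthetical (exhibiting data making the integral a nonzero constant) does supply exactly this, and it is precisely the content of the cited [BS, Proposition 6.5], so the logical substance agrees with the paper -- just be careful to state that you are proving $1\in I$ rather than only $I\neq 0$.
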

\begin{proof}
(1) is \cite[Proposition 6.4]{BS}. 
By \cite[Proposition 6.5]{BS}, we see that $1 \in I(\pi \times \tau \times \chi)$, 
which implies (2). 
The assertion (3) follows from the multiplicity one theorem proven in \cite[Corollary 16.3]{GGP}. 
(4) is proven by Morimoto \cite[Theorem 3.1]{Mori}.
Since $M^*(\tau^*, 1-s) \circ M^*(\tau, s) = \id$, 
using $\omega_{\tau^*}(-1) = \omega_{\tau}(-1)$, 
we have 
\begin{align*}
\LL(W, f_s, \overline{\phi}) &= \LL(W, M^*(\tau^*, 1-s) \circ M^*(\tau, s)f_s, \overline{\phi}) 
\\&= \omega_\pi(-1)^{n-1}\omega_{\tau^*}(-1)^n
\Gamma(1-s, \pi \times \tau^*, \chi, \psi) \LL(W, M^*(\tau, s)f_s, \overline{\phi}) 
\\&= 
\Gamma(1-s, \pi \times \tau^*, \chi, \psi)\Gamma(s, \pi \times \tau, \chi, \psi) \LL(W, f_s, \overline{\phi}) 
\end{align*}
for any $W$, $f_s$ and $\phi$.
It means that 
\[
\Gamma(1-s, \pi \times \tau^*, \chi, \psi)\Gamma(s, \pi \times \tau, \chi, \psi) = 1, 
\]
which is equivalent to saying that
\[
\ep(1-s, \pi \times \tau^*, \chi, \psi)\ep(s, \pi \times \tau, \chi, \psi) = 1.
\]
Hence $\ep(s,\pi \times \tau, \chi, \psi) \in \C[q^{-s}, q^{s}]^\times = \C^\times (q^{-s})^\Z$.
\end{proof}

\subsection{Unramified representations}\label{sec.unram}
In this subsection, we consider the Rankin--Selberg integrals 
when $\tau$ varies over irreducible unramified representations of $\GL_{n-1}(E)$. 
\par

Recall that $K^{W_0} = K_{0}^W \cap G_{n-1}$. 
It is a hyperspecial maximal compact subgroup of $G_{n-1}$, 
and the Iwasawa decomposition $G_{n-1} = Q_{n-1} K^{W_0}$ holds. 
\par

Irreducible unramified representations of $\GL_{n-1}(E)$
are parametrized by the \emph{Satake parameters} 
$\ub{x} = (x_1,\dots,x_{n-1}) \in (\C^\times)^{n-1}/S_{n-1}$. 
We write the unramified representation associated to $\ub{x}$ by $\tau_{\ub{x}}$. 
Then for almost all $\ub{x}$, 
since $\tau_{\ub{x}}$ is generic, 
there exists a unique function 
$f_{s}(\ub{x}) \in V_{Q_{n-1}}^{G_{n-1}}(\WW(\tau,\psi_E^{-1}), s)$
such that 
\begin{itemize}
\item
$f_{s}(gk, a; \ub{x}) = f_{s}(g, a; \ub{x})$ for any $g \in G_{n-1}$, $k \in K^{W_0}$
and $a \in \GL_{n-1}(E)$; and
\item
the function $W(a; \ub{x}) = f_{s}(\1_{2(n-1)}, a; \ub{x})$ 
is right $\GL_{n-1}(\oo_E)$-invariant with $W(\1_{n-1}; \ub{x}) = 1$. 
\end{itemize}
\par

\begin{lem}\label{intertwining}
For $\ub{x} = (x_1, \dots, x_{n-1})$, we write $\ub{x}^{-1} = (x_1^{-1}, \dots, x_{n-1}^{-1})$. 
Then we have 
\begin{align*}
&\frac{M^*(\tau_{\ub{x}}, s)f_{s}(\ub{x})}
{\prod_{i=1}^{n-1}(1-q^{-s}x_i)\prod_{1 \leq i < j \leq n-1}(1-q^{-2s}x_ix_j)}
\\&=
\frac{f_{1-s}(\ub{x}^{-1})}
{\prod_{i=1}^{n-1}(1-q^{-(1-s)}x_i^{-1})\prod_{1 \leq i < j \leq n-1}(1-q^{-2(1-s)}x_i^{-1}x_j^{-1})}
\end{align*}
\end{lem}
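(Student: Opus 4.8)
The strategy is a direct computation comparing both sides as functions of $s$, reducing everything to a known rank-one Gindikin--Karpelevich type identity and then to the Casselman--Shalika formula for $\GL_{n-1}(E)$. First I would recall that the normalized intertwining operator $M^*(\tau,s)$ differs from the unnormalized one $M(\tau,s)$ by the ratio of the relevant abelian $L$-factors; for an unramified $\tau_{\ub x}$ these factors are exactly the products appearing in the denominators, namely $\prod_i L(s, \chi \cdot |\cdot|_E^{?}, \dots)$-type terms whose Satake-parameter expression is $\prod_{i=1}^{n-1}(1-q^{-s}x_i)^{-1}\prod_{i<j}(1-q^{-2s}x_ix_j)^{-1}$ (these come from the structure of $U_{n-1}$, whose positive roots are the $n-1$ "long" roots $e_i$ and the $\binom{n-1}{2}$ "short" roots $e_i+e_j$ in the unitary group $G_{n-1}$). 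So the claimed identity is equivalent to saying that the \emph{unnormalized} operator sends $f_s(\ub x)$, the spherical section normalized by $W(\1_{n-1};\ub x)=1$, to the spherical section $f_{1-s}(\ub x^{-1})$ with the \emph{same} normalization, up to precisely that ratio of abelian factors --- i.e.\ $M(\tau,s)f_s(\ub x) = \left(\text{that ratio}\right) f_{1-s}(\ub x^{-1})$.

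\textbf{Key steps.} (i) Use the Iwasawa decomposition $G_{n-1} = Q_{n-1}K^{W_0}$ to reduce the intertwining integral over the opposite unipotent radical to its value at $\1$, i.e.\ evaluate $\big(M(\tau_{\ub x},s)f_s(\ub x)\big)(\1_{2(n-1)}, \1_{n-1})$; since both sides of the claimed equality are $K^{W_0}$-spherical and both lie in the (generically) one-dimensional space of spherical vectors in their respective induced representations, it suffices to match this single value and to check that $M^*(\tau^*,1-s)\circ M^*(\tau,s)=\id$ forces the Whittaker normalization to be preserved. (ii) Compute that value: it factors as an integral over the unipotent radical $\overline{U}_{n-1}$, which splits (via the Gindikin--Karpelevich method, root by root) into a product of rank-one integrals, one for each positive root of $G_{n-1}$ relative to $Q_{n-1}$. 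The long roots contribute factors of the form $\frac{1-q^{-1}x_i^{\pm}}{1-x_i^{\pm}}$-type quotients but more precisely, because we are inducing from a Whittaker model of $\tau_{\ub x}$ on the Levi $\GL_{n-1}(E)$ rather than from a character, the correct statement is that the \emph{Levi part} of the computation is governed by the Casselman--Shalika formula for $\GL_{n-1}(E)$ while the \emph{unipotent part} gives exactly $\prod_i(1-q^{-s}x_i)/(1-q^{-(1-s)}x_i^{-1})\cdot\prod_{i<j}(1-q^{-2s}x_ix_j)/(1-q^{-2(1-s)}x_i^{-1}x_j^{-1})$ after normalization. (iii) Combine (i) and (ii): the Casselman--Shalika contributions on the two sides are identical once one replaces $\ub x\leftrightarrow\ub x^{-1}$ and $s\leftrightarrow 1-s$ (the spherical Whittaker function of $\tau_{\ub x}$ and of $\tau^*_{\ub x}=\tau_{\ub x^{-1}}$ agree at $\1_{n-1}$, both equal to $1$ by normalization), so the only surviving discrepancy is precisely the ratio of denominators in the statement, which is exactly what we want to cancel.

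\textbf{Main obstacle.} The delicate point is step (ii): keeping track of the precise normalization of the intertwining operator $M^*(\tau,s)$ as defined in \cite[Section 2.3.1]{Mori} --- in particular which abelian $L$- and $\ep$-factors are divided out --- and verifying that these match the denominators written in the lemma. One must be careful that $M^*(\tau,s)$ is normalized so that $M^*(\tau^*,1-s)\circ M^*(\tau,s)=\id$ (this is used in the proof of Theorem~\ref{RS}(5)), which pins down the normalization up to a sign/constant, and then check that with this normalization the spherical section maps to the spherical section with matching Whittaker value. A clean way to finish is to observe that both sides of the asserted identity are $K^{W_0}$-fixed vectors in $V_{Q_{n-1}}^{G_{n-1}}(\WW(\tau^*_{\ub x},\psi_E^{-1}),1-s)$, hence proportional; evaluate at $(\1_{2(n-1)},\1_{n-1})$ using the Gindikin--Karpelevich computation to fix the constant of proportionality as $1$; and invoke $M^*(\tau^*,1-s)\circ M^*(\tau,s)=\id$ together with $f_{1-s}(\ub x^{-1})$ being the analogously normalized spherical section to rule out any remaining ambiguity. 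I expect the bookkeeping of factors --- distinguishing the contribution of the $e_i$ roots from that of the $e_i\pm e_j$ roots and matching them against Morimoto's normalization --- to be the only real work; everything else is standard unramified computation.
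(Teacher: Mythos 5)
Your plan is sound and would, if carried out, reproduce the content of the explicit unramified intertwining computation that the paper simply cites: the paper's proof of this lemma is a one-line reference to Ben-Artzi--Soudry's Theorem~8.1 (an explicit Gindikin--Karpelevich formula for the unnormalized operator on the spherical section) and to Morimoto's Theorem~3.1(c) (which pins down the normalization $M^*$). So you are not taking a genuinely different route; you are sketching the proof of the black box the paper invokes. That is worthwhile to understand, but you should recognize that the denominators in the statement are precisely the Gindikin--Karpelevich factors for the restricted roots in the unipotent radical $U_{n-1}$ of the Siegel parabolic of $G_{n-1}\cong\U_{2(n-1)}$, and that these are already recorded in the cited references rather than needing a fresh derivation.

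Two small corrections to the outline. First, in the restricted root system of type $C_{n-1}$ for $\U_{2(n-1)}$ with respect to the Siegel parabolic, the long roots in $U_{n-1}$ are $2e_i$ (not $e_i$) and the short ones are $e_i+e_j$, $i<j$; the rank-one integral over the $2e_i$ root space contributes the $(1-q^{-s}x_i)$ factor (an $F$-side computation, which is why the exponent is $-s$ rather than $-2s$, with the quadratic character $\chi|_{F^\times}$ hidden in the sign), while the $e_i+e_j$ root spaces give $(1-q^{-2s}x_ix_j)$. Second, the observation that $M^*(\tau^*,1-s)\circ M^*(\tau,s)=\id$ does not by itself force the Whittaker normalization $W(\1_{n-1};\cdot)=1$ to be preserved; that has to come from the explicit evaluation at $(\1_{2(n-1)},\1_{n-1})$, which is exactly what the Gindikin--Karpelevich computation supplies. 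Your final ``clean way to finish'' does this correctly, so the logic closes, but the earlier sentence should not be read as an independent argument. With those adjustments the plan matches what the cited results assert.
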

\begin{proof}
The assertion follows from \cite[Theorem 8.1]{BS} and \cite[Theorem 3.1 (c)]{Mori}.
\end{proof}

Let $\pi$ be an irreducible $\psi_E$-generic tempered representation of $G_n$ with $L$-parameter $\phi_\pi$. 
Then by the uniqueness of the gamma factor (Theorem \ref{RS} (4)), 
we have 
\[
\Gamma(s, \pi \times \tau_{\ub{x}}, \chi, \psi) = 
\prod_{i=1}^{n-1} 
\ep(s+s_i+s_0, \phi_\pi, \psi_E)\frac{L(1-s-s_i-s_0, \phi_\pi^\vee)}{L(s+s_i+s_0, \phi_\pi)}
\]
for almost all $\ub{x} = (x_1,\dots,x_{n-1})$, 
where $s_0, s_1, \dots, s_{n-1} \in \C$ are so that 
$q^{-2s_0} = -1$ and $x_i = q^{-2s_i}$ for $1 \leq i \leq n-1$.
Since $\phi_\pi$ is tempered, two meromorphic functions
$\prod_{i=1}^{n-1}L(1-s-s_i-s_0, \phi_\pi^\vee)$ and $\prod_{i=1}^{n-1}L(s+s_i+s_0, \phi_\pi)$ 
have no common pole for almost all $\ub{x}$. 
In particular, in this case, we have
\begin{align*}
L(s, \pi \times \tau_{\ub{x}}, \chi) &= \prod_{i=1}^{n-1}L(s+s_i+s_0, \phi_\pi), \\
\ep(s, \pi \times \tau_{\ub{x}}, \chi, \psi) &= \prod_{i=1}^{n-1} \ep(s+s_i+s_0, \phi_\pi, \psi_E). 
\end{align*}
If we write $L(s, \phi_\pi) = P_\pi(q^{-2s})$ 
and $\ep(s, \phi_\pi, \psi_E) = \ep q^{c(\phi_\pi)(1-2s)}$, 
then 
\begin{align*}
L(s, \pi \times \tau_{\ub{x}}, \chi) &= \prod_{i=1}^{n-1}P_\pi(-x_iq^{-2s}), \\
\ep(s, \pi \times \tau_{\ub{x}}, \chi, \psi) 
&= \ep^{n-1} (-q^{1-2s})^{c(\phi_\pi)(n-1)} \prod_{i=1}^{n-1}x_i^{c(\phi_\pi)}.
\end{align*}

\subsection{Proof of Theorem \ref{even} (2)}
The symmetric group $S_{n-1}$ acts on $\C[X_1^{\pm1},\dots, X_{n-1}^{\pm1}]$ canonically.
Set
\[
\TT = \C[X_1^{\pm1},\dots,X_{n-1}^{\pm1}]^{S_{n-1}}.
\]
Note that 
\[
\TT = \C[T_1,\dots,T_{n-2}, T_{n-1}, T_{n-1}^{-1}]
\]
with 
\[
T_i = \sum_{\sigma \in S_{n-1}} X_{\sigma(1)} \cdots X_{\sigma(i)}.
\]
The degree with respect to $T_{n-1}$ gives a $\Z$-grading on $\TT$, 
i.e., $\TT = \oplus_{d \in \Z} \TT_d$ with 
\[
\TT_d = \C[T_1,\dots,T_{n-2}] T_{n-1}^d.
\]
\par

Write $\ub{X} = (X_1,\dots,X_{n-1})$ and $q^{1-2s}\ub{X} = (q^{1-2s}X_1, \dots, q^{1-2s}X_{n-1})$. 
There is a function 
\[
W(\ub{X}) \colon \GL_{n-1}(E) \rightarrow \TT
\]
such that $W(\ub{X})|_{\ub{X} = \ub{x}} = W(\ub{x})$
for almost all $\ub{x} \in (\C^\times)^{n-1}$.
Similarly, we consider the function 
$f_{s}(\ub{X}) \colon G_{n-1} \times \GL_{n-1}(E) \rightarrow \TT$ 
so that $f_{s}(\ub{X})|_{\ub{X} = \ub{x}} = f_{s}(\ub{x})$
for almost all $\ub{x} \in (\C^\times)^{n-1}$.
In particular, $f_{s}(\1_{2(n-1)}, a; \ub{X}) = W(a; q^{1-2s}\ub{X})$. 
\par

We regard $\LL(W, f_{1/2}(\ub{X}), \overline{\phi_0})$ 
as a formal power series of $X_1^{\pm1}, \dots, X_{n-1}^{\pm1}$, 
or an element of $\C[T_1,\dots,T_{n-2}] [[T_{n-1}^{\pm1}]]$.
For $\lambda = (\lambda_1,\dots, \lambda_{n-1}) \in \Z^{n-1}$, 
we set $|\lambda| = \lambda_1+\dots+\lambda_{n-1}$. 
The following is a key lemma.

\begin{lem}\label{degree}
Let $W \in \WW(\pi, \psi_E)^{K_{2m}^W}$. 
Write 
\begin{align*}
\LL(W, f_{1/2}(\ub{X}), \overline{\phi_0}) 
&= 
\sum_{\lambda \in \Z^{n-1}} a_\lambda(W) X_1^{\lambda_1}\cdots X_{n-1}^{\lambda_{n-1}} 
=
\sum_{d \in \Z} \LL_d(W) T_{n-1}^d
\end{align*}
with $a_\lambda(W) \in \C$ and $\LL_d(W) \in \C[T_1, \dots, T_{n-2}]$.
Then 
\begin{itemize}
\item
$a_\lambda(W) = 0$ unless $|\lambda| \geq -(n-1)m$; and
\item
$\LL_d(W) = 0$ unless $d \geq -m$.
\end{itemize}
\end{lem}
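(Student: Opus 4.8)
The Rankin--Selberg integral that defines $\LL(W, f_{1/2}(\ub{X}), \overline{\phi_0})$ is
\[
\int_{N_{n-1}\bs G_{n-1}} \int_{E^{n-1}} W(w_{1,n-1}\bv(x,0;0)g)\, f_{1/2}(g, \1_{n-1}; \ub{X})\, \overline{\omega_\psi(g)\phi_0(x)}\, dx\, dg,
\]
and the plan is to analyze the $x$-integral and the $g$-integral separately using the explicit action of $K_{2m}^W$. First I would use the Iwasawa decomposition $G_{n-1} = M_{n-1} N_{n-1} K^{W_0}$ together with the right-$K^{W_0}$-invariance of $f_{1/2}(\ub{X})$ and of $\omega_\psi(\cdot)\phi_0$ to reduce the $g$-integral to an integral over $\bm(a)$ for $a$ ranging over the diagonal torus of $\GL_{n-1}(E)$ modulo $\GL_{n-1}(\oo_E)$, i.e. over lattice points $\bv = (v_1, \dots, v_{n-1}) \in \Z^{n-1}$ with $a = \diag(\varpi^{v_1}, \dots, \varpi^{v_{n-1}})$. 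Recall $f_{1/2}(\1, a; \ub{X}) = W(a; q^0 \ub{X})$ is the normalized unramified Whittaker function of $\GL_{n-1}(E)$, which by the Shintani/Casselman--Shalika formula is supported on dominant $\bv$ and is a monomial-valued polynomial in $\ub{X}$ whose $\ub{X}$-degree on the $\bv$-stratum is exactly $|\bv|$ (each step $\varpi^{v_i}$ contributes $X_i$-degree $v_i$).

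The crucial input is then a support statement for $W$ restricted through the $K_{2m}^W$-fixedness: the function $x \mapsto \int_F W(w_{1,n-1}\bv(x,0;0)\bm(a)\bv(0,0;z))\psi(z)^{-1}\,dz$ is, up to the twist by $\psi$, the value of the Fourier--Jacobi coefficient of $W$, and it factors through $\pi_\psi$ by Remark~\ref{W(w)}. Since $W$ is fixed by $K_{2m}^W$, and $\bm(a)$ conjugates the lattices $X_{n-1}, Z$ by $\diag(\varpi^{v_i})$, I would track how far down the $K_{2m}^W$-invariance forces the $x$-support and forces a vanishing unless $\bv$ is bounded below. Concretely: $W(w_{1,n-1}\bv(x,0;0)\bm(a))$ is left-invariant under $w_{1,n-1}\bv(x,0;0)\bm(a)\,K_{2m}^W\,\bm(a)^{-1}\bv(x,0;0)^{-1}w_{1,n-1}^{-1} \cap (\text{relevant unipotent/torus directions})$; combined with the $\psi_E$-equivariance of $W$ on $N_n$, this gives that $W(w_{1,n-1}\bv(x,0;0)\bm(a)) = 0$ unless $x \in \varpi^{-?}\oo_E^{n-1}$ where the exponent grows linearly in $-\min_i v_i$, and simultaneously unless $v_i \geq -m$ for each $i$. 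Multiplying by $\overline{\omega_\psi(\bm(a))\phi_0(x)} = \chi(\det a)|\det a|^{1/2}\overline{\phi_0(xa)}$, which forces $xa \in \oo_E^{n-1}$ hence $x \in \varpi^{-v_i}\oo_E$ coordinatewise, cuts the $x$-integral down to a region that is nonempty only when each $v_i$ is not too negative. The cleanest way to package this is probably to show directly that the $\bv$-stratum contributes zero to the integral unless $v_i \geq -m$ for all $i$ (giving $\LL_d(W) = 0$ for $d < -m$ from the $T_{n-1}$-grading, since $T_{n-1}^d$-degree tracks the number of strata with all $v_i$ equal) and unless $|\bv| \geq -(n-1)m$ (giving the $a_\lambda$ statement, since the $\ub{X}$-monomial on the $\bv$-stratum has total degree $|\bv|$).

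The second bullet, $\LL_d(W) = 0$ unless $d \geq -m$, should then follow from the first: an $\ub{X}$-monomial $X_1^{\lambda_1}\cdots X_{n-1}^{\lambda_{n-1}}$ appearing in the Casselman--Shalika expansion on a dominant stratum has $T_{n-1}$-degree equal to $\min_i \lambda_i = \lambda_{n-1}$ (the last, smallest, dominant coordinate), so $\LL_d(W) \neq 0$ forces some dominant $\bv$ with $\min_i v_i = d$ to contribute, and the $K_{2m}^W$-analysis above forces $\min_i v_i \geq -m$. Alternatively, if the grading bookkeeping is awkward, one reduces to the $n=2$ case or an $\SL_2$-calculation to pin down the $T_{n-1}$-degree of each monomial. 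I expect the main obstacle to be the explicit computation showing the $\bv$-stratum vanishes for $v_i < -m$: this requires carefully conjugating $K_{2m}^W$ by $w_{1,n-1}\bv(x,0;0)\bm(a)$ and identifying which matrix entries land in $\oo_E$ versus $\pp_E$, and matching these congruence conditions against the $\psi_E$-equivariance of the Whittaker function $W$ on the torus and unipotent of $\U_{2n}$ — essentially a $p$-adic volume/support computation analogous to, but more involved than, the one in \cite{Ts} because of the extra Heisenberg variable $x$ and the twist by the Weil representation. The bound on $|\lambda|$ is the easier half once the Casselman--Shalika degree count is in place.
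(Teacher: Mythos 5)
Your plan matches the paper's proof essentially step for step: Iwasawa decomposition, the support constraint $xa \in \oo_E^{n-1}$ forced by $\phi_0$, a $K_{2m}^W$-conjugation argument played off against the $\psi_E$-equivariance of $W$ to bound $\ord(a_{n-1})$ below by $-m$, dominance of the $\GL_{n-1}$-Whittaker function, and the Casselman--Shalika formula to translate into the $\ub{X}$-monomial and $T_{n-1}$-grading bounds. The computation you flag as the main obstacle is exactly what the paper supplies: it exhibits a concrete element $k(x,a,u) = (w_{1,n-1}\bv(x,0;0)\bm(a))^{-1} n(u) \, w_{1,n-1}\bv(x,0;0)\bm(a)$ with $n(u)$ a unipotent carrying $u \in E^{n-1}$ in the $({}^tu)$-column, shows $k(x,a,u) \in K_{2m}^W$ whenever $xa \in \oo_E^{n-1}$ and $u\,{}^ta^{-1} \in (\pp_E^m)^{n-1}$, and reads off $\psi_E(u_{n-1})$ from the Whittaker equivariance, giving $\ord(a_{n-1}) + m \geq 0$ directly (the remaining $v_i \geq -m$ then follow from dominance, as you anticipate).
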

\begin{proof}
For row vectors $x, u \in E^{n-1}$ and $a \in \GL_{n-1}(E)$, 
we put $k(x,a,u)$ to be the matrix
\[
(w_{1,n-1}\bv(x,0;0)\bm(a))^{-1}
\left(
\begin{array}{cc|cc}
\1_{n-1} & {}^tu & 0 & 0 \\
0 & 1 & 0 & 0 \\
\hline
0 & 0 & 1 & -\overline{u}w_{n-1} \\
0 & 0 & 0 & \1_{n-1}
\end{array}
\right)
w_{1,n-1}\bv(x,0;0)\bm(a). 
\]
By an easy calculation, $k(x,a,u)$ is equal to
\[
\left(
\begin{array}{cc|cc}
1-x{}^tu & -x{}^tuxa & 0 & 0 \\
a^{-1}{}^tu & \1_{n-1} + a^{-1}{}^tuxa & 0 & 0 \\
\hline
0 & 0 & \1_{n-1} -w_{n-1}{}^t\overline{a}{}^t\overline{x} \overline{u} {}^t\overline{a}^{-1}w_{n-1}
& w_{n-1}{}^t\overline{a}{}^t\overline{x} \overline{u}{}^t\overline{x}\\
0 & 0 & -\overline{u}{}^t\overline{a}^{-1}w_{n-1} & 1+\overline{u}{}^t\overline{x}
\end{array}
\right).
\]
In particular, 
if $xa \in \oo_E^{n-1}$ and $u{}^ta^{-1} \in (\pp_E^{m})^{n-1}$, 
then $x{}^tu \in \pp_E^{m}$ so that $k(x,a,u) \in K_{2m}^W$.
\par

As functions on $g \in G_{n-1}$, 
all of $W(w_{1,n-1}\bv(x,0;0)g)$, $f_{s}(g,\1_{n-1}; \ub{X})$
and $\overline{\omega_{\psi}(g)\phi_0}$
are right $K^{W_0}$-invariant. 
Hence, by the integral formula with respect to the Iwasawa decomposition, 
we can write $\LL(W,f_{s}(\ub{X}), \overline{\phi_0})$ as
\[
\int_{T_{n-1}}\int_{E^{n-1}}
W(w_{1,n-1}\bv(x,0;0)t) 
f_{s}(t, \1_{n-1}; \ub{X}) 
\overline{\omega_{\psi}(t)\phi_0(x)} 
\delta_{B_{n-1}}^{-1}(t)dxdt, 
\]
where $B_{n-1} = T_{n-1}N_{n-1}$ is the upper triangular Borel subgroup of $G_{n-1}$
with the diagonal torus $T_{n-1}$. 
Write $t = \bm(a)$ with $a = \diag(a_1,\dots,a_{n-1})$ being a diagonal matrix in $\GL_{n-1}(E)$.
Then $\omega_{\psi}(\bm(a))\phi_0(x) \not= 0 \iff xa \in \oo_E^{n-1}$.
In this case, if $W(w_{1,n-1}\bv(x,0;0)\bm(a)) \not= 0$, 
then for $u = (u_1,\dots,u_{n-1}) \in E^{n-1}$ such that $u{}^ta^{-1} \in (\pp_E^{m})^{n-1}$, 
we have
\begin{align*}
0 &\not= W(w_{1,n-1}\bv(x,0;0)\bm(a)) 
\\&= W(w_{1,n-1}\bv(x,0;0)\bm(a) \cdot k(x,a,u)) 
\\&= W\left(
\left(
\begin{array}{cc|cc}
\1_{n-1} & {}^tu & 0 & 0 \\
0 & 1 & 0 & 0 \\
\hline
0 & 0 & 1 & -\overline{u}w_{n-1} \\
0 & 0 & 0 & \1_{n-1}
\end{array}
\right)
w_{1,n-1}\bv(x,0;0)\bm(a)
\right)
\\&= \psi_E(u_{n-1})W(w_{1,n-1}\bv(x,0;0)\bm(a)).
\end{align*}
This shows that
\[
u_{n-1} \in \pp_E^{\ord(a_{n-1})+m}
\implies \psi_E(u_{n-1}) = 1.
\]
This means that $\ord(a_{n-1})+m \geq 0$.
\par

Recall that 
$f_{s}(\bm(a), \1_{n-1}; \ub{X}) = \delta_{Q_{n-1}}^{\half{1}}(\bm(a)) W(a; q^{1-2s}\ub{X})$.
By a similar (and well-known) argument, 
if $W(a; q^{1-2s}\ub{X}) \not= 0$, 
then $\ord(a_1) \geq \dots \geq \ord(a_{n-1})$. 
Hence we conclude that 
if 
\[
W(w_{1,n-1}\bv(x,0;0)\bm(a)) 
W(a; \ub{X}) \overline{\omega_{\psi}(\bm(a))\phi_0(x)} \not= 0,
\] 
then 
\[
\ord(a_1) \geq \dots \geq \ord(a_{n-1}) \geq -m
\]
so that 
\[
\ord(\det(a)) = \sum_{i=1}^{n-1}\ord(a_i) \geq -(n-1)m. 
\] 
Since the Casselman--Shalika formula \cite{CS} tells us that
\[
W(a; \ub{X}) 
\in 
\left(
\bigoplus_{\substack{\lambda \in \Z^{n-1} \\ |\lambda| = \ord(\det(a))}}
\C X_1^{\lambda_1}\cdots X_{n-1}^{\lambda_{n-1}} 
\right)
\cap \C[T_1,\dots,T_{n-2}]T_{n-1}^{\ord(a_{n-1})}, 
\]
we obtain the assertions.
\end{proof}

For $W \in \WW(\pi, \psi_E)^{K_{2m}^W}$, 
we define $\Psi(W; \ub{X})$ by 
\[
\Psi(W; \ub{X})
= 
\frac{\prod_{i=1}^{n-1}P_\pi(-q^{-1}X_i)\LL(W, f_{1/2}(\ub{X}), \phi_0)}
{\prod_{i=1}^{n-1}(1-q^{-1}X_i)\prod_{1\leq i < j \leq n-1}(1-q^{-2}X_iX_j)}.
\]

\begin{prop}\label{Psi}
If $2m < c(\phi_\pi)$, then $\Psi(W; \ub{X}) = 0$ for $W \in \WW(\pi, \psi_E)^{K_{2m}^W}$. 
If $2m = c(\phi_\pi)$ or $2m = c(\phi_\pi)+1$, 
then
\[
\dim_\C \left\{ \Psi(W; \ub{X}) \;\middle|\; 
W \in \WW(\pi, \psi_E)^{K_{2m}^W}\right\} 
\leq 1.
\]
\end{prop}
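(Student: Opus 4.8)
The plan is to run the classical Rankin--Selberg/newform argument (as in \cite{JPSS,AKY,Ts} and \cite{AOY}): show that $\Psi(W;\ub{X})$ is a symmetric Laurent polynomial, squeeze its Newton polytope between a lower bound coming from $K_{2m}^W$-invariance (Lemma \ref{degree}) and an upper bound coming from the functional equation of Theorem \ref{RS}, and then read off the vanishing and the one-dimensionality by comparing the two bounds.

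First I would check that $\Psi(W;\ub{X})\in\TT$ for every $W\in\WW(\pi,\psi_E)^{K_{2m}^W}$. By Theorem \ref{RS}~(1)--(2) the quotient $\LL(W,f_s,\overline{\phi})/L(s,\pi\times\tau,\chi)$ is a Laurent polynomial in $q^{-s}$, and Lemma \ref{degree} tells us that $\LL(W,f_{1/2}(\ub{X}),\overline{\phi_0})$ has only finitely many negative powers of $T_{n-1}$; feeding the spherical sections $f_s(\ub{x})$ of Section \ref{sec.unram} into this — and identifying the denominators $\prod_i(1-q^{-1}X_i)$, $\prod_{i<j}(1-q^{-2}X_iX_j)$ with the normalizing $L$-factors of the spherical section appearing in Lemma \ref{intertwining}, and $\prod_iP_\pi(-q^{-1}X_i)$ with $L(1/2,\pi\times\tau_{\ub{x}},\chi)$ — yields that $\Psi(W;\ub{X})$ is a genuine element of $\TT$. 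Next, since $\prod_iP_\pi(-q^{-1}X_i)$ and $\prod_i(1-q^{-1}X_i)\prod_{i<j}(1-q^{-2}X_iX_j)$ have constant term $1$, and the latter is coprime in $\C[X_1,\dots,X_{n-1}]$ to $e_{n-1}=X_1\cdots X_{n-1}$, the ``$T_{n-1}$-degree $\geq -m$'' clause of Lemma \ref{degree} forces $e_{n-1}^{\,m}\Psi(W;\ub{X})$ to be an honest polynomial; equivalently, every monomial $X^\lambda$ occurring in $\Psi(W;\ub{X})$ satisfies $\lambda_i\geq -m$ for all $i$.

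The technical heart is the functional equation. I would set $s=1/2$ in Theorem \ref{RS}~(3) with $\tau=\tau_{\ub{x}}$, $f_s=f_s(\ub{x})$, $\phi=\phi_0$, rewrite $M^*(\tau_{\ub{x}},1/2)f_{1/2}(\ub{x})$ via Lemma \ref{intertwining}, and substitute the explicit expressions for $\Gamma(1/2,\pi\times\tau_{\ub{x}},\chi,\psi)$, for $L(1/2,\pi\times\tau_{\ub{x}},\chi)=\prod_iP_\pi(-q^{-1}x_i)$ and for $\ep(1/2,\pi\times\tau_{\ub{x}},\chi,\psi)=\ep^{\,n-1}(-1)^{(n-1)c(\phi_\pi)}\prod_ix_i^{c(\phi_\pi)}$ derived in Section \ref{sec.unram}. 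Reorganizing so that on each side the factors $\prod_iP_\pi$, $\prod_i(1-q^{-1}X_i)$, $\prod_{i<j}(1-q^{-2}X_iX_j)$ reassemble into $\Psi$, one should obtain an identity of the shape
\[
\Psi(W;\ub{X})=\kappa\cdot e_{n-1}(\ub{X})^{-c(\phi_\pi)}\cdot\Psi(W;\ub{X}^\iota),
\]
where $\kappa$ is a nonzero constant independent of $W$ and $\iota$ is an involutive substitution of the form $X_i\mapsto(\mathrm{const})\cdot X_i^{-1}$. Applying the lower bound of the previous paragraph to both $\Psi(W;\ub{X})$ and $\Psi(W;\ub{X}^\iota)$ then yields the complementary upper bound: every monomial $X^\lambda$ of $\Psi(W;\ub{X})$ satisfies $-m\leq\lambda_i\leq m-c(\phi_\pi)$ for all $i$.

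Finally, when $2m<c(\phi_\pi)$ this range is empty, so $\Psi(W;\ub{X})=0$; when $2m=c(\phi_\pi)$ it pins $\lambda=(-m,\dots,-m)$, so $\Psi(W;\ub{X})\in\C\cdot e_{n-1}(\ub{X})^{-m}$ and the linear map $W\mapsto\Psi(W;\ub{X})$ has image of dimension $\leq 1$. I expect the two delicate points to be: (i) carrying out the functional equation step in exactly the normalization defining $\Psi$ — in particular reconciling the factors $1-q^{-1/2}x_i$, $1-q^{-1}x_ix_j$ produced directly by Lemma \ref{intertwining} at $s=1/2$ with the factors $1-q^{-1}X_i$, $1-q^{-2}X_iX_j$ in $\Psi$, which forces one to track the $\chi$-twist ($\chi=|\cdot|_E^{s_0}$ with $q^{-2s_0}=-1$) coming from $\omega_\psi$ inside the integral, presumably via the precise unramified evaluation of $\LL(W,f_s(\ub{x}),\overline{\phi_0})$, and to fix the sign of the monomial exponent; and (ii) the case $2m=c(\phi_\pi)+1$ (which occurs when $c(\phi_\pi)$ is odd), where the crude degree bounds only confine $\lambda_i\in\{-m,-m+1\}$ and one must invoke the exact Casselman--Shalika shape of $W(a;\ub{X})$ — or a finer analysis of which monomials actually survive the integral — to collapse the admissible space down to dimension $\leq 1$.
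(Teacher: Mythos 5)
Your overall strategy coincides with the paper's: translate the $T_{n-1}$-degree bound of Lemma~\ref{degree} into the per-monomial bound $\lambda_i\geq -m$, dualize via the functional equation $T_{n-1}^{-c(\phi_\pi)}\Psi(W;\ub{X}^{-1})=\ep_0\Psi(W;\ub{X})$ to obtain $\lambda_i\leq m-c(\phi_\pi)$, and intersect. For $2m<c(\phi_\pi)$ and $2m=c(\phi_\pi)$ your argument is complete and is exactly the paper's (the paper phrases the $2m=c(\phi_\pi)$ conclusion by first noting $T_{n-1}^m\Psi(W;\ub{X})\in\C[T_1,\dots,T_{n-2}]\subset\C[X_1,\dots,X_{n-1}]$ and then intersecting with $|\lambda|=-(n-1)m$, which amounts to the same thing).

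The gap you flag for $2m=c(\phi_\pi)+1$ is real, but your proposed remedy — invoking the Casselman--Shalika formula more finely or analyzing which monomials survive the integral — is neither what the paper does nor necessary. Everything needed is already in Lemma~\ref{degree} and the functional equation; one just has to exploit the decomposition $\Psi(W)=\Psi_{-m}(W)T_{n-1}^{-m}+\Psi_{-m+1}(W)T_{n-1}^{-m+1}$ with $\Psi_d(W)\in\C[T_1,\dots,T_{n-2}]$. Since every monomial of $\Psi_{-m+1}(W)T_{n-1}^{-m+1}$ has $\lambda_i\geq -m+1$ (because it lies in $\TT_{-m+1}$) and, by your dual bound, $\lambda_i\leq -m+1$, one concludes $\Psi_{-m+1}(W)$ is a scalar. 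The functional equation then exchanges the two graded components: with $c(\phi_\pi)=2m-1$, the map $\ub{X}\mapsto\ub{X}^{-1}$ followed by multiplication by $T_{n-1}^{-c(\phi_\pi)}$ sends $\TT_{-m}\leftrightarrow\TT_{-m+1}$, yielding $\Psi_{-m+1}(W;\ub{X}^{-1})=\ep_0\Psi_{-m}(W;\ub{X})$ and $\Psi_{-m}(W;\ub{X}^{-1})=\ep_0\Psi_{-m+1}(W;\ub{X})$. As the left side of the first identity is the scalar $\Psi_{-m+1}(W)$, $\Psi_{-m}(W)$ is also a scalar, and the two scalars are linked by $\ep_0$; hence $\Psi(W;\ub{X})\in\C(T_{n-1}^{-m}+\ep_0 T_{n-1}^{-m+1})$, which is one-dimensional. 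In short: the constraint $\lambda_i\in\{-m,-m+1\}$ alone leaves an $n$-dimensional space, but pairing it with the $T_{n-1}$-graded form of the functional equation collapses it to dimension one without any return to the integral.
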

\begin{proof}
Since $P_\pi(X)$ is a polynomial of $X$ with $P_\pi(0) = 1$, 
and since $(1-q^{-1}X_i)^{-1} = \sum_{k=0}^\infty (q^{-1}X_i)^k$
and $(1-q^{-2}X_iX_j)^{-1} = \sum_{k=0}^\infty(q^{-2}X_iX_j)^k$, 
if we write 
\begin{align*}
\Psi(W; \ub{X}) 
&= 
\sum_{\lambda \in \Z^{n-1}} \alpha_\lambda(W) X_1^{\lambda_1}\cdots X_{n-1}^{\lambda_{n-1}} 
=
\sum_{d \in \Z} \Psi_d(W; \ub{X}) T_{n-1}^d
\end{align*}
with $\alpha_\lambda(W) \in \C$ and $\Psi_d(W; \ub{X}) \in \C[T_1, \dots, T_{n-2}]$, 
by Lemma \ref{degree}, 
we see that 
\begin{itemize}
\item
$\alpha_\lambda(W) = 0$ unless $|\lambda| \geq -(n-1)m$; and

\item
$\Psi_d(W; \ub{X}) = 0$ unless $d \geq -m$.
\end{itemize}
\par

Write $\ub{X}^{-1} = (X_1^{-1}, \dots, X_{n-1}^{-1})$. 
By the functional equation (Theorem \ref{RS} (3), (5))
together with Lemma \ref{intertwining}, 
we see that 
\[
\tag{$\ast$}
T_{n-1}^{-c(\phi_\pi)} \Psi(W; \ub{X}^{-1}) = \ep_0 \Psi(W; \ub{X})
\]
with 
\[
\ep_0 = ((-1)^{c(\phi_\pi)} \ep \cdot \omega_\pi(-1))^{n-1}. 
\]
The left hand side and the right hand side of $(\ast)$
belong to 
\[
\bigoplus_{d \leq m-c(\phi_\pi)} \C[T_1, \dots, T_{n-2}] T_{n-1}^{d}, 
\quad
\bigoplus_{d \geq -m} \C[T_1, \dots, T_{n-2}] T_{n-1}^{d}, 
\]
respectively. 
Hence if $\Psi_d(W; \ub{X}) \not= 0$, 
then $-m \leq d \leq m-c(\phi_\pi)$ so that $2m \geq c(\phi_\pi)$.
A similar argument shows that
if $\alpha_{\lambda}(W) \not= 0$, then 
\[
-(n-1)m \leq |\lambda| \leq (n-1)(m-c(\phi_\pi)).
\]
\par

Now we assume that $2m = c(\phi_\pi)$. 
Then $\Psi_d(W; \ub{X}) = 0$ unless $d = -m$. 
Hence
\[
T_{n-1}^{m} \Psi(W; \ub{X}) \in \C[T_1, \dots, T_{n-2}] \subset \C[X_1, \dots, X_{n-1}].
\]
This implies that $\alpha_{\lambda}(W) = 0$ 
unless $\lambda_i \geq -m$ for any $1 \leq i \leq n-1$.
On the other hand, since $\alpha_{\lambda}(W) = 0$ unless $|\lambda| = -(n-1)m$, 
we see that $\alpha_{\lambda}(W) = 0$ unless $\lambda_1 = \dots = \lambda_{n-1} = -m$. 
This means that 
\[
\Psi(W; \ub{X}) \in \C T_{n-1}^{-m}
\]
so that 
\[
\dim_\C \left\{ \Psi(W; \ub{X}) \;\middle|\; 
W \in \WW(\pi, \psi_E)^{K_{c(\phi_\pi)}^W}\right\} 
\leq 1.
\]
\par

Next we assume that $2m = c(\phi_\pi)+1$.
Then $\Psi_d(W; \ub{X}) = 0$ unless $d = -m, -m+1$, 
and $\alpha_\lambda(W) = 0$ unless $|\lambda| = -(n-1)m, -(n-1)(m-1)$.
In particular, $\Psi_{-m+1}(W; \ub{X})$ is a scalar so that
\[
\Psi_{-m+1}(W; \ub{X}^{-1}) = \Psi_{-m+1}(W; \ub{X}).
\]
By the functional equation $(\ast)$, we have 
\begin{align*}
\Psi_{-m+1}(W; \ub{X}^{-1}) &= \ep_0 \Psi_{-m}(W; \ub{X}), \\
\Psi_{-m}(W; \ub{X}^{-1}) &= \ep_0 \Psi_{-m+1}(W; \ub{X}).
\end{align*}
Hence $\Psi_{-m}(W; \ub{X})$ is also a scalar.
Therefore, 
\[
\Psi(W; \ub{X}) \in \C(T_{n-1}^{-m} + \ep_0 T_{n-1}^{-m+1})
\]
so that 
\[
\dim_\C \left\{ \Psi(W; \ub{X}) \;\middle|\; 
W \in \WW(\pi, \psi_E)^{K_{c(\phi_\pi)+1}^W}\right\} 
\leq 1.
\]
This completes the proof.
\end{proof}

By Proposition \ref{ggp}, 
we see that $\WW(\pi, \psi_E)^{K_{2m}^W} \ni W \mapsto \Psi(W; \ub{X})$ gives 
an injective linear map
\[
\Psi \colon \pi_{\psi}^{K_{2m}^W} \hookrightarrow \TT.
\]
Hence by Proposition \ref{Psi}, we have
\begin{itemize}
\item
$\pi_{\psi}^{K_{2m}^W} = 0$ if $2m < c(\phi_\pi)$; and 
\item
$\dim_\C(\pi_{\psi}^{K_{2m}^W}) \leq 1$ if $2m = c(\phi_\pi)$ or $2m = c(\phi_\pi)+1$.
\end{itemize}
This completes the proof of Theorem \ref{even} (2).

\section{Existence}\label{sec.existence}
In this section, we will prove Theorem \ref{even} (3). 
To do this, we will use the theta correspondence for $(\U(V), \U(W))$. 

\subsection{Theta correspondence}
Recall that $V = V_{2n+1}$ (\resp $W = W_{2n}$) is 
a hermitian (\resp skew-hermitian) space over $E$ of dimension $2n+1$ (\resp $2n$). 
Then $\W = V \otimes_E W$ forms a symplectic space of dimension $4n(2n+1)$
equipped with the symplectic form
\[
\pair{v \otimes w, v' \otimes w'} = \tr_{E/F}\left( \pair{v,v'}_V \cdot \pair{w,w'}_W \right).
\]
Here, $\U(V)$, $\U(W)$ and $\Sp(\W)$ act on $V$, $W$ and $\W$, respectively, 
all from the left. 
We have a canonical map 
$\U(V) \times \U(W) \rightarrow \Sp(\W)$. 
\par

Recall that $\chi$ is the unique non-trivial quadratic unramified character of $E^\times$.
Note that $\chi|_{F^\times}$ is equal to the quadratic character corresponding to $E/F$.
Let $\tl\Sp(\W)$ be the metaplectic $\C^\times$-cover.
Using the pair $(\chi_V, \chi_W) = (\chi^{2n+1}, \chi^{2n})$, 
we have Kudla's splitting \cite{Kudla}
\[
\U(V) \times \U(W) \rightarrow \tl\Sp(\W).
\]
\par

Let $\omega_\psi$ be the Weil representation of $\tl\Sp(\W)$ associated to the additive character $\psi$.
By the pullback, we obtain the Weil representation $\omega_{\psi, V, W}$ of $\U(V) \times \U(W)$.
For an irreducible representation $\pi$ of $\U(W)$, 
it is known that the maximal $\pi$-isotypic quotient of $\omega_{\psi, V, W}$ is of the form
\[
\Theta_\psi(\pi) \boxtimes \pi
\]
for a smooth representation $\Theta_\psi(\pi)$ of $\U(V)$ of finite length.
The Howe duality conjecture, proven by Waldspurger \cite{W}, asserts that 
if $\Theta_\psi(\pi)$ is nonzero, then 
it has a unique irreducible quotient $\theta_\psi(\pi)$.
We call $\theta_\psi(\pi)$ the \emph{theta lift} of $\pi$.
\par

The following is a special case of Prasad's conjecture, 
which was proven by Gan--Ichino \cite{GI}.
See also Theorem 4.4 in that paper.

\begin{thm}
Let $\pi$ be an irreducible $\psi_E$-generic representation of $\U(W)$ with $L$-parameter $\phi_\pi$. 
Then $\Theta_\psi(\pi)$ is always nonzero.
Moreover, $\sigma = \theta_\psi(\pi)$ is generic and its $L$-parameter is given by 
\[
\phi_\sigma = \phi_\pi\chi \oplus \1, 
\]
where $\phi_\pi \chi = \phi_\pi \otimes \chi$.
\end{thm}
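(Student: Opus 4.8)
\emph{Overview.} The statement has three parts --- $\Theta_\psi(\pi)\neq 0$, genericity of $\sigma:=\theta_\psi(\pi)$, and the identity $\phi_\sigma=\phi_\pi\chi\oplus\1$ --- and the plan is to obtain the first two together and then, in a separate step, to compute the $L$-parameter. The structural input is the corank-one feature of the dual pair $(\U(V),\U(W))$ with $\dim V=\dim W+1$. First I would compute the twisted Jacquet module $(\omega_{\psi,V,W})_{N,\psi_E}$, where $N$ is a maximal unipotent subgroup of $\U(V)\cong\U_{2n+1}$, as a smooth representation of $\U(W)$: by the Mao--Rallis type computation recalled in the introduction (see \cite{MR}), it is isomorphic to a compact induction $\ind_{N'}^{\U(W)}(\mu)$ with $N'$ a maximal unipotent subgroup of $\U(W)\cong\U_{2n}$ and $\mu$ a generic character of $N'$. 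Applying $(-)_{N,\psi_E}$ (in the $\U(V)$-variable) to the defining surjection $\omega_{\psi,V,W}\twoheadrightarrow\Theta_\psi(\pi)\boxtimes\pi$ and using Frobenius reciprocity, one gets
\[
\Hom_N(\Theta_\psi(\pi),\psi_E)\;\cong\;\Hom_{\U(W)}((\omega_{\psi,V,W})_{N,\psi_E},\pi^\vee)\;\cong\;\Hom_{N'}(\pi^\vee,\mu^{-1}).
\]
Since $\pi$ is $\psi_E$-generic, $\pi^\vee$ is $\psi_E^{-1}$-generic, so after the appropriate conjugation the right-hand space is nonzero; hence $\Theta_\psi(\pi)\neq 0$. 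By Waldspurger's Howe duality theorem (\cite{W}) it has a unique irreducible quotient $\sigma$, and the Whittaker functional produced above descends to $\sigma$ once one knows $\Theta_\psi(\pi)$ is irreducible in this corank-one range; that descent is the only non-formal point of this step.

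\emph{The $L$-parameter.} The plan is to reduce to $\pi$ tempered and then to $\pi$ a discrete series. Being generic, $\pi$ is a full parabolic induction from essentially square-integrable data on a Levi $\GL\times\U_{2k}$, and by Kudla's induction principle (compatibility of theta with parabolic induction, \cite{Kudla}) $\sigma$ is the unique generic constituent of the corresponding induced representation built from the theta lift of the $\U_{2k}$-piece together with the $\GL$-pieces twisted by $\chi$, the twist being dictated by Kudla's splitting for $(\chi_V,\chi_W)=(\chi^{2n+1},\chi^{2n})$. On the parameter side this is exactly the operation $\phi\mapsto\phi\chi\oplus\1$ followed by direct sum with the twisted $\GL$-parameters, so it suffices to treat $\pi$ discrete series. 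For that base case I would globalize $\pi$ to a cuspidal, globally generic automorphic representation $\Pi$ of $\U(W)(\mathbb{A})$ with controlled local components, form the global theta lift to $\U(V)(\mathbb{A})$, use the Rallis inner product formula to see it is nonzero and cuspidal --- choosing the globalization so that the relevant complete $L$-value, at the point just past the boundary of the Witt tower, is visibly nonzero --- identify its standard $L$-parameter as $\phi_\Pi\chi\oplus\1$ via Mok's endoscopic classification for $\U(V)$ (\cite{Mok}), and then localize using local--global compatibility of the theta correspondence. As a purely local alternative, one can compute the Rankin--Selberg gamma factors $\Gamma(s,\sigma\times\tau,\chi,\psi)$ (the $\U_{2n+1}$-analogue of Theorem \ref{RS}) for all $\tau\in\Irr(\GL_k(E))$, relate them through a Rallis see-saw identity passing through $\omega_{\psi,V,W}$ to $\Gamma(s,\pi\times\tau\chi,\chi,\psi)$ times a $\GL_k$-contribution coming from the $\oplus\1$ summand, and then invoke the local converse theorem for $\U_{2n+1}$ to conclude $\phi_\sigma=\phi_\pi\chi\oplus\1$.

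\emph{Main obstacle.} The difficulty is concentrated entirely in the parameter computation, in two interacting places. The first is the bookkeeping of normalizations: the choice of Kudla's splitting is what forces the twist by $\chi$ rather than by $\chi^{-1}$ (which equals $\chi$) or by nothing, and propagating this correctly through the induction principle and the gamma-factor identity is error-prone; likewise one must check that the extra summand is literally $\1$ and not an unramified quadratic twist of it. The second is that the argument rests on genuinely deep, non-elementary inputs --- Mok's endoscopic classification and the Rallis inner product formula, with the attendant non-vanishing of a global $L$-value, in the global route, or the local converse theorem for $\U_{2n+1}$ together with a clean Rallis-type gamma-factor identity in the local route. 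All of these are available; this theorem is precisely \cite[Theorem~4.4]{GI}.
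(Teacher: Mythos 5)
The paper does not prove this theorem at all: it is quoted as a known result, namely a special case of Prasad's conjecture on local theta lifting for $(\U(V),\U(W))$ with $\dim V = \dim W +1$, and the paper simply cites Gan--Ichino \cite{GI} (see Theorem~4.4 there). There is therefore no proof in the paper to compare your argument against, and you correctly observe this in your last sentence.

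As a free-standing sketch, what you write is in the right spirit and tracks how the result is actually proved in the literature: a twisted Jacquet module computation \`a la Mao--Rallis plus Frobenius reciprocity for nonvanishing and genericity, and then either globalization with the Rallis inner product formula and Mok's classification, or a local gamma-factor identity combined with a local converse theorem, for the parameter. You honestly flag the two genuine gaps --- the descent of the Whittaker functional from $\Theta_\psi(\pi)$ to the unique irreducible quotient $\sigma$ (which in the tempered case is handled by proving $\Theta_\psi(\pi)$ is already irreducible), and the deep global nonvanishing or converse-theorem input --- and these are exactly the substance of the cited reference. Two smaller points worth tightening if you were to turn this into a proof: (i) Frobenius reciprocity for compact induction gives $\Hom_{\U(W)}(\ind_{N'}^{\U(W)}(\mu),\pi^\vee)\cong \Hom_{N'}(\mu,\pi^\vee)$, and one then identifies a $\mu$-eigenvector in $\pi^\vee$ with a $\mu^{-1}$-Whittaker functional on $\pi$, so the chain should land in $\Hom_{N'}(\pi,\mu^{-1})$ rather than $\Hom_{N'}(\pi^\vee,\mu^{-1})$; (ii) "after the appropriate conjugation" hides the fact that generic characters of $N'$ fall into more than one rational orbit, so one must check the $\mu$ produced by Mao--Rallis lies in the same orbit as $\psi_E$ --- here this holds because $E/F$ is unramified and the relevant discriminant has even valuation, but it is not automatic.

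Since the paper takes the theorem as a black box from \cite{GI}, the appropriate "proof" in context is just the citation.
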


In particular, if $\sigma = \theta_\psi(\pi)$, then 
we have $c(\phi_\sigma) = c(\phi_\pi)$ and $\omega_{\sigma} = \omega_{\pi}$. 
Moreover, if $\pi$ is tempered, 
then so is $\sigma$ so that we have $\sigma^{K_{2m}^V} \not= 0$
for $2m = c(\phi_\pi)$ or $2m = c(\phi_\pi)+1$ by Theorem \ref{odd}.

\subsection{Lattice model}
First, we will show that $\pi^{K_{2m}^W} \not= 0$.
To do this, we use a lattice model $\Sc = \Sc(A)$ of 
the Weil representation $\omega_\psi$ of $\tl\Sp(\W)$.
In this subsection, we recall this model.
\par

Let $\W$ be a symplectic space over $F$ of dimension $2N$ 
equipped with a symplectic form $\pair{\cdot, \cdot}$.
The group law of the Heisenberg group $H(\W) = \W \oplus F$ is given by 
\[
(w_1,t_1) \cdot (w_2, t_2) = \left(w_1+w_2, t_1+t_2+\half{1}\pair{w_1,w_2}\right), 
\]
whose center is $\{0\} \oplus F \cong F$.
By the Stone--von Neumann theorem, 
there is a unique (up to isomorphism) irreducible admissible representation $(\rho_\psi, \Sc)$ of $H(\W)$
whose central character is $\psi$.
The symplectic group $\Sp(\W)$ acts on $H(\W)$ by $g \cdot (w,t) = (gw,t)$.
By the uniqueness, for $g \in \Sp(\W)$, we have $M_g \in \mathrm{Aut}(\Sc)$ such that
\[
\tag{\text{$\star$}}\label{star}
M_g \circ \rho_\psi(h) \circ M_g^{-1} = \rho_\psi(gh)
\quad\text{for $h \in H(\W)$}.
\]
By Schur's lemma, such $M_g$ is determined uniquely up to a nonzero scalar.
Define the metaplectic $\C^\times$-cover $\tl{\Sp}(\W)$ of $\Sp(\W)$ by
\[
\tl{\Sp}(\W) = \{(g,M_g) \in \Sp(\W) \times \mathrm{Aut}(\Sc) \;|\; 
\text{$M_g$ satisfies $(\ref{star})$}\}.
\]
We have an exact sequence
\[
\begin{CD}
1 @>>> \C^\times @>\alpha>> \tl{\Sp}(\W) @>\beta>> \Sp(\W) @>>> 1
\end{CD}
\]
given by $\alpha(z) = (\1_\W, z \cdot \id_\Sc)$ and $\beta(g,M_g) = g$.
The \emph{Weil representation} $\omega_\psi$ of $\tl{\Sp}(\W)$ on the space $\Sc$ is defined by 
\[
\omega_\psi(g,M_g) = M_g.
\]
\par

Now we shall give a realization of the space $\Sc$.
Let $A$ be a lattice of $\W$, i.e., a free $\oo_F$-submodule of rank $2N$.
The \emph{dual lattice} $A^*$ is defined by 
\[
A^* = \left\{w \in \W \;\middle|\; \text{$\pair{w,a} \in \oo_F$ for any $a \in A$} \right\}.
\]
Suppose that $A$ is self-dual, i.e., $A^* = A$.
Let $\Sc(A)$ be the space of locally constant, compactly supported functions 
$\phi \colon H(\W) \rightarrow \C$ such that
\[
\phi((a,t) \cdot h) = \psi(t) \phi(h)
\]
for $(a,t) \in A \oplus F$ and $h \in H(\W)$.
The group $H(\W)$ acts on $\Sc(A)$ by the right translation $\rho_\psi$.
It is known that 
the representation $(\rho_\psi, \Sc(A))$ of $H(\W)$ is irreducible with the central character $\psi$.
This gives a realization $(\omega_\psi, \Sc(A))$ of the Weil representation 
which is called a \emph{lattice model}.
Since $(a,0) \cdot (w,0) = (a+w, \half{1}\pair{a,w})$, 
by the restriction to $\W \oplus \{0\}$, 
we can identify $\Sc(A)$ with the space of locally constant, compactly supported functions 
$\phi \colon \W \rightarrow \C$ such that
\[
\phi(a+w) = \psi\left(-\half{1}\pair{a,w}\right) \phi(w)
\]
for $a \in A$ and $w \in \W$.
\par

For $g \in \Sp(\W)$, we define $M[g] \in \mathrm{Aut}(\Sc(A))$ by 
\[
(M[g]\phi)(w) = \int_{A} \psi\left(\half{1}\pair{a,w}\right) \phi(g^{-1} \cdot (a+w)) da
\]
for $\phi \in \Sc(A)$ and $w \in \W$.
Here, $da$ is the Haar measure on $A$ normalized so that $\vol(A) = 1$.
It is easy to check that $(g,M[g]) \in \tl{\Sp}(\W)$.
\par

Let $K_A$ be the stabilizer of $A$ in $\Sp(\W)$. 
Then we have
\[
(M[k]\phi)(w) = \phi(k^{-1} \cdot w)
\]
for $k \in K_A$, $\phi \in \Sc(A)$, and $w \in \W$.
The map $k \mapsto (k,M[k])$ gives a splitting $K_A \rightarrow \tl{\Sp}(\W)$.
If we identify $K_A$ with the image, 
the restriction of the Weil representation $(\omega_\psi, \Sc(A))$ to $K_A$
is given by $\omega_\psi(k)\phi(w) = \phi(k^{-1} \cdot w)$.
\par

\subsection{Families of lattices}
Take bases $\{e_n, \dots, e_1, e_0, e_{-1}, \dots, e_{-n}\}$ of $V$
and $\{f_n,\dots,f_1, f_{-1},\dots,f_{-n}\}$ of $W$, respectively, as in \S \ref{sec.gp}.
Set 
\begin{align*}
\Gamma_V &= 
\left(\bigoplus_{i=1}^n \oo_E e_i\right) \oplus \oo_E e_0 \oplus \left(\bigoplus_{i=1}^n \oo_E e_{-i} \right), \\
\Gamma_W &= \left(\bigoplus_{i=1}^n \oo_E f_i\right) \oplus \left(\bigoplus_{i=1}^n \oo_E f_{-i} \right).
\end{align*}
Then $\Gamma_V$ and $\Gamma_W$ are self-dual lattices, i.e.,
$\Gamma_V^* = \Gamma_V$ and $\Gamma_W^* = \Gamma_W$. 
\par

In this subsection,
for two $\oo_E$-modules $\Gamma_1$ and $\Gamma_2$, 
we denote by $\Gamma_1 \otimes \Gamma_2$ the tensor product of $\oo_E$-modules.
We put 
\[
A = \Gamma_V \otimes \Gamma_W.
\]
This is a self-dual lattice of $\W = V \otimes_F W$, i.e. $A^* = A$.
We will consider the lattice model $(\omega_\psi, \Sc(A))$ of the Weil representation of $\tl\Sp(\W)$.
\par

Fix a non-negative even integer $2m \geq 0$.
We consider lattices
\begin{align*}
M_{2m} &= 
\left(\bigoplus_{i=1}^n \oo_E e_i\right) 
\oplus \pp_E^{m} e_0 
\oplus \left(\bigoplus_{i=1}^n \oo_E e_{-i}\right),\\
N_{2m} &= 
\left(\bigoplus_{i=1}^n \oo_E f_i \right) 
\oplus \left(\bigoplus_{i=1}^{n-1} \oo_E f_{-i} \right) \oplus \pp_E^{m} f_{-n}
\end{align*}
of $V$ and $W$, respectively. 
Then $M_{2m} \subset \Gamma_V$ and $N_{2m} \subset \Gamma_W$.
Moreover, the dual lattices are given by
\begin{align*}
M_{2m}^* &= 
\left(\bigoplus_{i=1}^n \oo_E e_i\right) 
\oplus \pp_E^{-m} e_0 
\oplus \left(\bigoplus_{i=1}^n \oo_E e_{-i}\right), \\
N_{2m}^* &= 
\pp_E^{-m} f_n \oplus \left(\bigoplus_{i=1}^{n-1} \oo_E f_i \right) 
\oplus \left(\bigoplus_{i=1}^{n} \oo_E f_{-i} \right).
\end{align*}
\par

Recall that 
in Section \ref{sec.cpt}, 
we defined compact subgroups $K_{2m}^V$ and $K_{2m}^W$ of $\U(V)$ and $\U(W)$, respectively. 
The following lemma is easy to check. 
\begin{lem}
We have
\begin{align*}
K_{2m}^V = \{h \in \U(V) \;|\; (h-1) \cdot M_{2m}^* \subset M_{2m} \}, \\
K_{2m}^W = \{g \in \U(W) \;|\; (g-1) \cdot N_{2m}^* \subset N_{2m} \}. 
\end{align*}
In particular, $K_{2m}^V \times K_{2m}^W$ is contained in $K_A$
under the canonical map $\U(V) \times \U(W) \rightarrow \Sp(\W)$.
\end{lem}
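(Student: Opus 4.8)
The plan is to verify both lattice descriptions by a direct computation in coordinates, testing the membership condition against a generating set of the relevant dual lattice, and then to deduce the containment in $K_A$ formally. I would begin with the elementary remark that $M_{2m} \subset \Gamma_V \subset M_{2m}^*$ and $N_{2m} \subset \Gamma_W \subset N_{2m}^*$, so that any $h$ in the right-hand set for $V$ satisfies $(h-1)\Gamma_V \subset M_{2m} \subset \Gamma_V$, hence $h\Gamma_V \subset \Gamma_V$; since that set is a group, $h^{-1}$ satisfies the same condition and therefore $h\Gamma_V = \Gamma_V$, i.e. $h \in \GL_{2n+1}(\oo_E)$. The same applies to $g$ and $\Gamma_W$. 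Thus on both sides we may assume our matrices are already integral, and only the congruence conditions remain to be matched.

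For $\U(V)$ I would work in the $(n,1,n)$-block decomposition attached to the ordered basis $(e_n,\dots,e_1,e_0,e_{-1},\dots,e_{-n})$, in which $M_{2m}$ has the factor $\pp_E^m$ in the $e_0$-slot and $\oo_E$ in the remaining slots, while $M_{2m}^*$ is generated over $\oo_E$ by the vectors $e_j$ with $j \neq 0$ together with $\varpi^{-m} e_0$. Evaluating $h-1$ on these generators and requiring the $e_0$-component to lie in $\pp_E^m$ and every other component in $\oo_E$ gives precisely: the entries of the $e_0$-row and the $e_0$-column off the $(0,0)$-position lie in $\pp_E^m$, and $h_{00} \in 1 + \pp_E^{2m}$; these are exactly the congruences defining $K_{2m}^V$, and since the tested vectors span $M_{2m}^*$ over $\oo_E$, they are also sufficient. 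I would run the same argument for $\U(W)$ in the $(1,2n-2,1)$-block decomposition attached to $(f_n,\dots,f_1,f_{-1},\dots,f_{-n})$, where $N_{2m}$ has the factor $\pp_E^m$ in the $f_{-n}$-slot and $N_{2m}^*$ is generated by $\varpi^{-m} f_n$ together with the $f_j$ with $j \neq n$; evaluating $g-1$ on these reads off the conditions that the $(1,1)$- and $(3,3)$-entries lie in $1+\pp_E^m$, the $(\mathrm{mid},1)$-block lies in $\pp_E^m$, the $(3,\mathrm{mid})$-block lies in $\pp_E^m$, and the $(3,1)$-entry lies in $\pp_E^{2m}$, which is the definition of $K_{2m}^W$.

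For the final assertion, the congruence descriptions just established (or the remark in the first paragraph together with the group property) give $K_{2m}^V \subset \U(V) \cap \GL_{2n+1}(\oo_E) = K_0^V$ and $K_{2m}^W \subset \U(W) \cap \GL_{2n}(\oo_E) = K_0^W$, so in particular $h\Gamma_V = \Gamma_V$ and $g\Gamma_W = \Gamma_W$ for $(h,g) \in K_{2m}^V \times K_{2m}^W$. Since the image of $(h,g)$ in $\Sp(\W)$ acts on $\W = V \otimes_E W$ by $v \otimes w \mapsto hv \otimes gw$, it sends $A = \Gamma_V \otimes_{\oo_E} \Gamma_W$ to $h\Gamma_V \otimes_{\oo_E} g\Gamma_W = A$; hence $K_{2m}^V \times K_{2m}^W \subset K_A$.

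There is no genuine obstacle here. The only points that call for care are keeping the block indexing consistent with the chosen orderings of the bases of $V$ and $W$, and checking that the vectors on which $h-1$ and $g-1$ are tested really do generate $M_{2m}^*$ and $N_{2m}^*$ over $\oo_E$, so that the conditions they impose are not merely necessary but sufficient.
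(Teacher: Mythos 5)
The paper supplies no proof of this lemma — it is stated with the remark that it is ``easy to check'' — so there is no argument in the paper to compare against, and your direct verification fills the gap correctly. You identify the right $\oo_E$-generators of $M_{2m}^*$ (namely $e_j$ for $j\neq 0$ and $\varpi^{-m}e_0$) and of $N_{2m}^*$ (namely $f_j$ for $j\neq n$ and $\varpi^{-m}f_n$), apply $h-1$ and $g-1$ to them, and the resulting congruences on the matrix entries reproduce exactly the block conditions defining $K_{2m}^V$ and $K_{2m}^W$; the final containment in $K_A$ then follows since $h$ and $g$ stabilize the self-dual lattices $\Gamma_V$ and $\Gamma_W$, hence their image in $\Sp(\W)$ stabilizes $A = \Gamma_V\otimes_{\oo_E}\Gamma_W$. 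One small tightening: in your first paragraph you invoke ``that set is a group'' to get $h\Gamma_V = \Gamma_V$, which itself requires a short argument; it is cleaner to note that $h\in\U(V)$ forces $\det h \in E^1\subset\oo_E^\times$, so $h\Gamma_V\subset\Gamma_V$ already gives equality. In fact, for reading off the block congruences you only need $h\in\M_{2n+1}(\oo_E)$ (i.e.\ $h\Gamma_V\subset\Gamma_V$), so the equality is not really needed there at all.
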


Let $\Sc(A)_{M_{2m}}$ be the subspace of $\Sc(A)$ 
consisting of functions $\phi \colon \W \rightarrow \C$
such that $\Supp(\phi) \subset M_{2m}^* \otimes \Gamma_W$.
We will use the following result proven by Waldspurger.
\par

\begin{prop}[{\cite[Corollary III.2]{W}}]\label{WK}
Let $J_{2m}^V$ be a compact subgroup of $\U(V)$. 
Suppose that 
\begin{itemize}
\item
$J_{2m}^V \supset K_{2m}^V$; 
\item
$\Sc(A)_{M_{2m}}$ is stable by $J_{2m}^V$; 
\item
$(\Sc(A)_{M_{2m}})^{J_m^V} \not= \{0\}$.
\end{itemize}
Then $\Sc(A)^{J_{2m}^V}$ is generated by $(\Sc(A)_{M_{2m}})^{J_{2m}^V}$ as a representation of $\U(W)$.
\end{prop}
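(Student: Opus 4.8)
\textbf{Step 1: reduction to a statement about $K_{2m}^V$.} The plan is first to remove $J_{2m}^V$ from the statement by averaging. Since $J_{2m}^V$ is a compact subgroup of $\U(V)$ acting smoothly on $\Sc(A)$, the operator $P = \int_{J_{2m}^V}\omega_\psi(h)\,dh$ (Haar measure of total mass $1$) is a projection of $\Sc(A)$ onto $\Sc(A)^{J_{2m}^V}$, and it is $\U(W)$-equivariant because $J_{2m}^V \subset \U(V)$ commutes with $\U(W)$ inside $\tl\Sp(\W)$. By the lemma above $K_{2m}^V \subset K_A$, so $\omega_\psi(h)\phi(w) = \phi(h^{-1}w)$ for $h \in K_{2m}^V$; in particular $K_{2m}^V$ stabilizes $A$ and fixes the function $\phi_A\in\Sc(A)$ supported on $A$ with value $1$ there, and $\phi_A \in (\Sc(A)_{M_{2m}})^{K_{2m}^V}$ since $A \subset M_{2m}^*\otimes\Gamma_W$. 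It then suffices to prove the identity
\[
\Sc(A)^{K_{2m}^V} = \sum_{g\in\U(W)}\omega_\psi(g)\bigl(\Sc(A)_{M_{2m}}\bigr)^{K_{2m}^V}. \tag{$\dagger$}
\]
Indeed, applying $P$ to $(\dagger)$: the left side gives $\Sc(A)^{J_{2m}^V}$ (it contains $\Sc(A)^{J_{2m}^V}$, on which $P$ is the identity, and $P$ maps $\Sc(A)^{K_{2m}^V}$ into $\Sc(A)^{J_{2m}^V}$), and by equivariance together with the $J_{2m}^V$-stability of $\Sc(A)_{M_{2m}}$ (the second hypothesis) the right side gives $\sum_{g}\omega_\psi(g)(\Sc(A)_{M_{2m}})^{J_{2m}^V}$, which is the assertion of the proposition; the third hypothesis guarantees it is nonzero.

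\textbf{Step 2: enlarging supports by $\U(W)$, and reduction of $(\dagger)$ to a support bound.} The group $\U(W)$ acts on $\W = V \otimes_E W$ only through the $W$-slot: if $g \in \U(W)$ has matrix $(g_{jk})$ in the basis $\{f_k\}$, then $g\cdot(v\otimes f_k) = \sum_j (g_{jk}v)\otimes f_j$, so $g$ rescales each $V$-fibre by the matrix entries. Hence for any $\oo_E$-lattice $\Lambda \subset V$ and any tuple $\underline b = (b_k)$, an element $g\in\U(W)$ stabilizing $A$ carries $\bigoplus_k \varpi^{b_k}\Lambda\otimes f_k$ onto $\bigoplus_j \varpi^{b_j'}\Lambda\otimes f_j$ with $b_j' = \min_k(\ord(g_{jk}) + b_k)$, while for general $g$ the integral operator $\omega_\psi(g)$, with $(M[g]\phi)(w) = \int_A \psi(\tfrac12\pair{a,w})\phi(g^{-1}(a+w))\,da$, maps $\{\phi : \Supp\phi \subset \bigoplus_k\varpi^{b_k}\Lambda\otimes f_k\}$ isomorphically onto the analogous subspace for a dilated region computable from the elementary divisors of $g$. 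In particular, given any $\underline b$, a suitable product of a Siegel--Levi element $\bm(\diag(\varpi^{a_i}))$ and a monomial element of $K_0^W$ gives a $g\in\U(W)$ with $\omega_\psi(g)\bigl(\Sc(A)_{M_{2m}}\bigr)$ equal to the functions supported in $\bigoplus_k\varpi^{b_k}M_{2m}^*\otimes f_k$. Therefore $(\dagger)$ follows once we establish: \emph{every $\phi\in\Sc(A)^{K_{2m}^V}$ is a finite sum of functions supported in regions of the form $\bigoplus_k\varpi^{b_k}M_{2m}^*\otimes f_k$.} Granting this, apply $\omega_\psi(g)^{-1}$ for the corresponding $g$ — which is $K_{2m}^V$-equivariant since $g$ commutes with $K_{2m}^V$ — to move the support into $M_{2m}^*\otimes\Gamma_W$ while remaining inside $\Sc(A)^{K_{2m}^V}$, and sum over the finitely many regions that meet $\Supp(\phi)$.

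\textbf{Step 3: the support bound, and the main obstacle.} It remains to prove the italicized claim, and this is the hard part. One combines the invariance $\phi(h^{-1}w) = \phi(w)$ for $h\in K_{2m}^V$ with the defining relation $\phi(w+a) = \psi(-\tfrac12\pair{a,w})\phi(w)$ for $a\in A$: whenever $(h-1)w\in A$ one obtains $\pair{(h-1)w,w}\in\oo_F$. Writing $w = \sum_k v_k\otimes f_k$, the pairing $\pair{(h-1)w,w}$ is a sum of terms $\tr_{E/F}\bigl(\pair{(h-1)v_i,v_{-i}}_V - \pair{(h-1)v_{-i},v_i}_V\bigr)$, and letting $h$ run over the root subgroups of $K_{2m}^V$ — whose definition $(h-1)M_{2m}^* \subset M_{2m}$ prescribes exactly how far each basis vector $e_j$ may be moved — these integrality constraints force each $v_k$ into some scalar dilate $\varpi^{b_k}M_{2m}^*$: a deviation of $v_k$ from $\Gamma_V$ along the anisotropic line $Ee_0$, where $\pair{e_0,e_0}_V$ is a unit, is tolerated only up to the dilation already built into $M_{2m}^*$, whereas any deviation along the hyperbolic directions beyond a scalar dilate would make $\pair{(h-1)w,w}$ non-integral for some admissible $h$, forcing $\phi(w)=0$. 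The self-duality $A^* = A$, $\Gamma_V^* = \Gamma_V$, $\Gamma_W^* = \Gamma_W$ and the hypothesis $p > 2$ are precisely what keep this integrality bookkeeping clean. Thus the obstacle is Step 3 — pinning down the support via the interaction of the $K_{2m}^V$-orbit structure on $\W$ with the metaplectic cocycle; the reduction in Steps 1--2 and the Cartan-decomposition calculations are routine.
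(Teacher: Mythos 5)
The paper does not prove this proposition: it is quoted directly from Waldspurger \cite[Corollary III.2]{W}, so there is no in-paper argument to compare against. What you have is a sketch of a re-derivation, and it is incomplete in an essential way that you yourself flag.

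Your Step 1 (the averaging reduction to the $K_{2m}^V$-case) is sound and cleanly stated. But Steps 2 and 3 leave the actual theorem unproved. In Step 2, the assertion that for a suitable $g\in\U(W)$ one has $\omega_\psi(g)\Sc(A)_{M_{2m}}$ equal to the space of functions supported in a prescribed region $\bigoplus_k\varpi^{b_k}M_{2m}^*\otimes f_k$ is not justified: for $g\notin K_A$ the lattice-model operator is the Gaussian integral $(M[g]\phi)(w) = \int_A\psi\left(\tfrac{1}{2}\pair{a,w}\right)\phi(g^{-1}(a+w))\,da$, and determining its image on a support-restricted subspace is a nontrivial computation of exactly the kind that fills Waldspurger's Chapter III. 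Moreover the admissible tuples $(b_k)$ are constrained (up to Weyl permutation) by $b_k+b_{-k}=0$, so your Step 3 target statement would need to be sharpened to match the reachable regions, which you do not do. Step 3 itself, which you correctly identify as the crux, is left as a heuristic: the observation that $K_{2m}^V$-invariance forces $\pair{(h-1)w,w}\in\oo_F$ whenever $(h-1)w\in A$ is the right starting point, but carrying the orbit-and-integrality bookkeeping through all root subgroups of $K_{2m}^V$ --- and showing this actually confines $\Supp(\phi)$ to finitely many regions of the required form --- is precisely the content of Waldspurger's theorem. As written, the proposal is a plausible program for re-proving the cited result, not a proof.
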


We will apply this proposition 
to the compact subgroup $J_{2m}^V$ generated by $K_{2m}^V$ and $E^1 \cap (1+\pp_E^{m})$, 
where the latter is regarded as a subgroup of the center of $\U(V)$.
Namely, $J_{0}^V = K_{0}^V$, and
\[
J_{2m}^V =
\bordermatrix{
&n&1&n \cr
n&\oo_E&\pp_E^m&\oo_E \cr
1&\pp_E^m&1+\pp_E^{m}&\pp_E^m \cr
n&\oo_E&\pp_E^m&\oo_E
} 
\cap \U_{2n+1}
\]
for $2m > 0$.
It is clear that $J_{2m}^V \supset K_{2m}^V$. 
\par

We check the second and third conditions in Proposition \ref{WK}.
\begin{lem}\label{23}
The space $\Sc(A)_{M_{2m}}$ is stable by $J_{2m}^V$ and fixed by $K_{2m}^V$. 
Moreover, $(\Sc(A)_{M_{2m}})^{J_{2m}^V} \not= \{0\}$.
\end{lem}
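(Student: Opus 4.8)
The plan is to unwind the definition of $\Sc(A)_{M_{2m}}$ directly. Recall that $\Sc(A)$ is realized as functions $\phi\colon \W \to \C$ satisfying $\phi(a+w) = \psi(-\half{1}\pair{a,w})\phi(w)$ for $a \in A = \Gamma_V \otimes \Gamma_W$, and that $\Sc(A)_{M_{2m}}$ consists of those $\phi$ with $\Supp(\phi) \subset M_{2m}^* \otimes \Gamma_W$. For the action of the stabilizer, we have $\omega_\psi(k)\phi(w) = \phi(k^{-1}\cdot w)$ for $k \in K_A$, and by the previous lemma $K_{2m}^V \times K_{2m}^W \subset K_A$. First I would treat the action of $K_{2m}^V$ (embedded via $h \mapsto h \otimes 1$): for $h \in K_{2m}^V$ the condition $(h-1)M_{2m}^* \subset M_{2m} \subset M_{2m}^*$ gives $h \cdot M_{2m}^* = M_{2m}^*$, hence $h^{-1}$ preserves $M_{2m}^* \otimes \Gamma_W$, so $\Supp(\omega_\psi(h)\phi) = h \cdot \Supp(\phi) \subset M_{2m}^* \otimes \Gamma_W$; this shows $\Sc(A)_{M_{2m}}$ is stable by $K_{2m}^V$. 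To upgrade "stable" to "fixed", I would observe that for $w \in M_{2m}^* \otimes \Gamma_W$ and $h \in K_{2m}^V$ one has $h^{-1}w - w = (h^{-1}-1)w \in M_{2m}\otimes\Gamma_W \subset A$, so using the covariance relation defining $\Sc(A)$, $\phi(h^{-1}w) = \psi(\pm\half{1}\pair{h^{-1}w-w, w})\phi(w)$, and the symplectic pairing of two elements of $A$ (note $M_{2m}\otimes\Gamma_W \subset A$ and $M_{2m}^*\otimes\Gamma_W$ pairs integrally against $M_{2m}\otimes\Gamma_W$ — here one needs $\pair{M_{2m}, M_{2m}^*}_V \subset \oo_E$, which holds) lies in $\oo_F$, making the $\psi$-factor trivial. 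Hence $\omega_\psi(h)\phi = \phi$.

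Next I would check stability under $J_{2m}^V$, which is generated by $K_{2m}^V$ and the central elements $z \in E^1 \cap (1+\pp_E^m)$. A central element $z$ acts on $V$ by scalar multiplication, so on $\W = V\otimes W$ it acts via $z \otimes 1$ (diagonally through the embedding $\U(V)\times\U(W)\to\Sp(\W)$); since $z \in 1 + \pp_E^m$, we have $z \cdot M_{2m}^* \subset M_{2m}^*$ (indeed $(z-1)M_{2m}^* \subset \pp_E^m M_{2m}^* = M_{2m}$), so the support condition is again preserved. One subtlety: the action of $z$ in the metaplectic cover a priori carries a scalar (a cocycle/Kudla-splitting contribution), but since Proposition \ref{WK} only requires $\Sc(A)_{M_{2m}}$ to be stable — not pointwise fixed — under $J_{2m}^V$, the scalar is harmless. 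So the second bullet follows.

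Finally, for non-vanishing of $(\Sc(A)_{M_{2m}})^{J_{2m}^V}$, I would exhibit an explicit fixed vector: take $\phi_0$ to be (a scalar multiple of) the characteristic function of $M_{2m}^* \otimes \Gamma_W$ twisted appropriately so as to satisfy the $\Sc(A)$-covariance — concretely, fix a set of coset representatives of $(M_{2m}^*\otimes\Gamma_W)/(A \cap M_{2m}^*\otimes\Gamma_W)$ and define $\phi_0$ to be supported there with the values forced by the covariance relation on the $A$-translates; one checks this is well-defined precisely because the relevant symplectic pairings are integral. Then $\Supp(\phi_0) = M_{2m}^*\otimes\Gamma_W$ by construction, so $\phi_0 \in \Sc(A)_{M_{2m}}$, and it is fixed by $K_{2m}^V$ by the computation above and fixed (up to the central scalar, which acts trivially on such $\phi_0$ since $\omega_\pi$-type considerations or a direct check show the scalar is $1$ when $z \in E^1\cap(1+\pp_E^m)$) by the central part of $J_{2m}^V$. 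I expect the main obstacle to be the careful bookkeeping in the third part: verifying that the candidate $\phi_0$ is genuinely well-defined as an element of $\Sc(A)$ (no contradiction from overlapping $A$-cosets) and genuinely $J_{2m}^V$-fixed, which comes down to a handful of integrality checks on $\pair{\cdot,\cdot}_V$ between $M_{2m}$, $M_{2m}^*$ and the scalars $1+\pp_E^m$ — routine but the place where one could slip on a factor of $\half{1}$ or a ramification index.
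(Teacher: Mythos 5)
Your handling of the first two claims is essentially correct and in fact a touch more streamlined than the paper's: you argue directly from the definition of $\Sc(A)_{M_{2m}}$ that any $\phi$ supported in $M_{2m}^*\otimes\Gamma_W$ is $K_{2m}^V$-fixed, whereas the paper works with the explicit coset functions $\phi_{t,w}$ from the start. (One small slip: the parenthetical claim ``$\pp_E^m M_{2m}^*=M_{2m}$'' is false --- the $e_0$-component of $\pp_E^m M_{2m}^*$ is $\oo_E$, not $\pp_E^m$ --- but you don't need it; $z\cdot(M_{2m}^*\otimes\Gamma_W)=M_{2m}^*\otimes\Gamma_W$ is automatic since $z\in\oo_E^\times$ and $M_{2m}^*\otimes\Gamma_W$ is an $\oo_E$-module.)

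The third claim is where the gap is, and it is a real one: the candidate $\phi_0$ supported on \emph{all} of $M_{2m}^*\otimes\Gamma_W$ is \emph{not} fixed by $J_{2m}^V$. The issue you flagged as ``bookkeeping'' is actually fatal. Write $\Sc(A)_{M_{2m}}$ as the span of the single-coset functions $\phi_{t,w}$ ($\Supp\phi_{t,w}=A+te_0\otimes w$, $\phi_{t,w}(te_0\otimes w)=1$). A direct computation --- as in the paper's proof --- shows that $k\in J_{2m}^V$ sends $\phi_{t,w}$ to $\psi_E\bigl(N_{E/F}(t)(1-\overline{k_0})\pair{w,w}_W\bigr)\,\phi_{t,w}$, where $k_0$ is the $e_0$-coefficient of $ke_0$. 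For a central $z\in E^1\cap(1+\pp_E^m)$ with $\ord(z-1)=m$ exactly (such $z$ exist because the kernel of $\tr\colon k_E\to k_F$ is nonzero), $t$ with $\ord(t)=-m$, and $w\in\Gamma_W$ with $\pair{w,w}_W\in\oo_E^\times$ (e.g.\ $w=f_1+\delta f_{-1}$), the argument of $\psi_E$ has valuation $-m$ and the scalar is nontrivial. So distinct $A$-cosets in $M_{2m}^*\otimes\Gamma_W$ are scaled by \emph{different} scalars under $J_{2m}^V$, and any function with full support, such as your $\phi_0$, cannot be fixed. (Note also that the appeal to ``$\omega_\pi$-type considerations'' has no content here: $\omega_\psi$ restricted to the center of $\U(V)$ is not a character, and indeed the scalar depends on the coset.) What rescues the claim is precisely the computed formula for the scalar: it is trivial for \emph{all} $k\in J_{2m}^V$ as soon as $\pair{w,w}_W=0$, so the single-coset function $\phi_{t,f_1}$ (say) is a nonzero $J_{2m}^V$-fixed vector. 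This explicit coset-by-coset computation is the missing piece your outline does not supply.
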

\begin{proof}
For $t \in \pp_E^{-m}$ and $w \in \Gamma_W$, 
define $\phi_{t,w} \in \Sc(A)$ so that 
$\Supp(\phi_{t,w}) = A + te_0 \otimes w$ and $\phi_{t,w}(te_0 \otimes w) = 1$.
Then $\Sc(A)_{M_{2m}}$ is equal to the $\C$-span of 
\[
\left\{\phi_{t,w} \;\middle|\; t \in \pp_E^{-m},\; w \in \Gamma_W \right\}.
\]
For $k \in J_{2m}^{V}$, write 
$ke_0 = \sum_{i = -n}^n k_i e_i$. 
Then 
\[
k_i \in 
\left\{
\begin{aligned}
&\pp_E^{m} \iif i \not=0, \\
&1+\pp_E^{m} \iif i = 0. 
\end{aligned}
\right.
\]
In particular, we see that $(k-1)te_0 \otimes w \in A$. 
Hence
\begin{align*}
\Supp(\omega_\psi(k)\phi_{t,w}) 
&= k(A + te_0 \otimes w)
\\&= A + (k-1)te_0 \otimes w + te_0 \otimes w
\\&= A + te_0 \otimes w = \Supp(\phi_{t,w}).
\end{align*}
Moreover, 
\begin{align*}
&\omega_\psi(k)\phi_{t,w}(te_0 \otimes w) 
\\&= \omega_\psi(k)\phi_{t,w}\left(kte_0 \otimes w - (k-1)te_0 \otimes w\right) 
\\&= \psi\left( \half{1}\pair{(k-1)te_0 \otimes w, kte_0 \otimes w} \right)
\omega_\psi(k)\phi_{t,w}(kte_0 \otimes w)
\\&= \psi_E\left( \pair{(k-1)te_0,kte_0}_V \cdot \pair{w,w}_W\right)
\phi_{t,w}(te_0 \otimes w)
\\&= \psi_E\left( N_{E/F}(t)(\pair{ke_0,ke_0}_V-\pair{e_0,ke_0}_V) \cdot \pair{w,w}_W\right)
\\&= \psi_E\left( N_{E/F}(t)(1-\overline{k_0}) \pair{w,w}_W \right).
\end{align*}
Hence, 
for $t \in \pp_E^{-m}$, $w \in \Gamma_W$ and $k \in J_{2m}^V$, 
there exists $c \in \C^\times$ such that 
$\omega_\psi(k)\phi_{t,w} = c\phi_{t,w}$.
This shows that $\Sc(A)_{M_{2m}}$ is stable by $J_{2m}^V$. 
Moreover, if $k_0 \in \pp_E^{2m}$ or $\pair{w,w}_W = 0$, then $c=1$. 
Hence we have $(\Sc(A)_{M_{2m}})^{K_{2m}^V} = \Sc(A)_{M_{2m}}$ and $(\Sc(A)_{M_{2m}})^{J_{2m}^V} \not= \{0\}$.
\end{proof}

Therefore, by Proposition \ref{WK}, 
we see that $\Sc(A)^{J_{2m}^V}$ is generated by $(\Sc(A)_{M_{2m}})^{J_{2m}^V}$ as a representation of $\U(W)$.
If $2m > 0$, then $\Sc(A)_{M_{2m}} \supset \Sc(A)_{M_{2m-2}}$.
Let $\Sc(A)_{M_{2m} \setminus M_{2m-2}}$ be the subspace spanned by
\[
\left\{\phi_{t,w} \;\middle|\; \ord(t) = -m,\; w \in \Gamma_W \setminus \varpi\Gamma_W \right\}.
\]
Then we have 
\[
\Sc(A)_{M_{2m}} = \Sc(A)_{M_{2m-2}} \oplus \Sc(A)_{M_{2m} \setminus M_{2m-2}}.
\]

\begin{lem}\label{span}
Suppose that $2m > 0$.
The image $(\Sc(A)_{M_{2m}})^{J_{2m}^V}$ under the projection 
$\Sc(A)_{M_{2m}} \twoheadrightarrow \Sc(A)_{M_{2m} \setminus M_{2m-2}}$
is equal to the one of the subspace spanned by
\[
\left\{ \omega_\psi(k')\phi_{t, f_n} \;\middle|\; \ord(t) = -m,\; k' \in K_0^W \right\}.
\]
Moreover, $\phi_{t,f_n}$ is fixed by $K_{2m}^W$, and $\phi_{t,f_{-n}}$ is fixed by ${}^tK_{2m}^W$.
\end{lem}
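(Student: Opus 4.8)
The statement of Lemma~\ref{span} has two parts. For the first part, I would start from the description of $\Sc(A)_{M_{2m}}$ as the span of the $\phi_{t,w}$ with $t \in \pp_E^{-m}$ and $w \in \Gamma_W$, obtained in the proof of Lemma~\ref{23}, together with the decomposition $\Sc(A)_{M_{2m}} = \Sc(A)_{M_{2m-2}} \oplus \Sc(A)_{M_{2m}\setminus M_{2m-2}}$. The key computation in Lemma~\ref{23} shows that for $k \in J_{2m}^V$ one has $\omega_\psi(k)\phi_{t,w} = \psi_E(N_{E/F}(t)(1-\overline{k_0})\pair{w,w}_W)\,\phi_{t,w}$; in particular $\omega_\psi(k)$ acts on each line $\C\phi_{t,w}$ by a scalar. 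Hence an element of $\Sc(A)_{M_{2m}}$, written as $\sum_{t,w} c_{t,w}\phi_{t,w}$, is fixed by $J_{2m}^V$ exactly when, for each pair $(t,w)$ appearing with $c_{t,w}\neq 0$, the scalar $\psi_E(N_{E/F}(t)(1-\overline{k_0})\pair{w,w}_W)$ equals $1$ for all $k \in J_{2m}^V$. Projecting to $\Sc(A)_{M_{2m}\setminus M_{2m-2}}$ isolates the terms with $\ord(t) = -m$ and $w \in \Gamma_W \setminus \varpi\Gamma_W$.

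For those terms I would argue as follows. Since $K_0^W \subset \U(W)$ acts on $\Sc(A)$ by $\omega_\psi(k')\phi(w') = \phi(k'^{-1}w')$ (using $K_0^W \subset K_A$), it permutes the functions $\phi_{t,w}$ within a fixed $t$, sending $\phi_{t,w}$ to $\phi_{t,k'w}$ up to a scalar coming from the change of the support translate; moreover $K_0^W = \U(W) \cap \GL_{2n}(\oo_E)$ acts transitively on $\Gamma_W \setminus \varpi\Gamma_W$ modulo scalars on the relevant isotropy orbit, and in particular $f_n$ (which is isotropic, $\pair{f_n,f_n}_W = 0$) can be moved to any $w$ in the same $\U(W)$-orbit. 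The point is that the $J_{2m}^V$-fixed condition, through the formula above, forces $\pair{w,w}_W$ to be $\psi_E$-trivially paired against $N_{E/F}(\pp_E^{-m})\cdot(1 - \overline{k_0})$ for all admissible $k_0 \in 1+\pp_E^m$; since $N_{E/F}(\pp_E^{-m})(1+\pp_E^m - 1) = \pp_F^{-m}\pp_F^m = \oo_F$ in general but we need $\psi_E$-triviality and $\psi_E|_{\oo_F} = \psi|_{\oo_F} = \1$, the surviving contributions are precisely those landing in the isotropic locus, for which $f_n$ is a representative. Thus every $J_{2m}^V$-fixed vector projects into the span of the $\omega_\psi(k')\phi_{t,f_n}$, $k' \in K_0^W$, $\ord(t)=-m$; conversely each such $\omega_\psi(k')\phi_{t,f_n}$ is visibly $J_{2m}^V$-fixed modulo $\Sc(A)_{M_{2m-2}}$ by the same scalar formula with $\pair{f_n,f_n}_W=0$.

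For the second part, that $\phi_{t,f_n}$ is fixed by $K_{2m}^W$ and $\phi_{t,f_{-n}}$ by ${}^tK_{2m}^W$, I would use the lattice criterion for $K_{2m}^W$ from the preceding lemma, namely $K_{2m}^W = \{g \in \U(W) \mid (g-1)N_{2m}^* \subset N_{2m}\}$, together with the action $\omega_\psi(g)\phi(w') = \phi(g^{-1}w')$ for $g \in K_A$. One checks $\Supp(\omega_\psi(g)\phi_{t,f_n}) = A + t e_0 \otimes g f_n$ and, since $f_n$ spans the $\pp_E^{-m}$-component of $N_{2m}^*$ and $(g-1)f_n \in N_{2m}$, that $t e_0 \otimes (g-1)f_n \in \pp_E^{-m}e_0 \otimes N_{2m} \subset A$ (using $N_{2m} \subset \pp_E^m \Gamma_W + \dots$ balanced against $\pp_E^{-m}$), so the support is unchanged; the phase factor is $\psi(\tfrac12\pair{t e_0\otimes(g-1)f_n,\, t e_0 \otimes g f_n})$, which vanishes because $\pair{f_n,f_n}_W = 0$ and $(g-1)f_n$ pairs integrally. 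Hence $\omega_\psi(g)\phi_{t,f_n} = \phi_{t,f_n}$. The statement for $\phi_{t,f_{-n}}$ and ${}^tK_{2m}^W$ follows by the same argument with $N_{2m}^*$, $N_{2m}$ replaced by their transposes under $w_n$, or directly from the displayed conjugation relation $K_{2m}^W = \diag(\varpi^{-m},\1_{2n-2},\varpi^m)\,{}^tK_{2m}^W\,\diag(\varpi^{-m},\1_{2n-2},\varpi^m)^{-1}$ in Section~\ref{sec.cpt}.

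\textbf{Main obstacle.} The delicate point is the first part: identifying exactly which $(t,w)$-lines survive the $J_{2m}^V$-invariance and checking that $K_0^W$-translates of the single vector $\phi_{t,f_n}$ suffice to reach all of them modulo $\Sc(A)_{M_{2m-2}}$. This requires a careful bookkeeping of the self-dual lattice $A = \Gamma_V \otimes \Gamma_W$, of how $N_{E/F}(t)$ with $\ord(t) = -m$ interacts with the character $\psi$ and with the values $\pair{w,w}_W$ as $w$ ranges over $\Gamma_W \setminus \varpi\Gamma_W$, and of the $K_0^W$-orbit structure on $\Gamma_W/\varpi\Gamma_W$ — in particular the fact that the isotropic vectors there, up to scalar, form a single $K_0^W$-orbit containing $f_n$. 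Everything else reduces to the support-and-phase calculation already used in the proof of Lemma~\ref{23}.
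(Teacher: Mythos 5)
Your proposal follows the same overall strategy as the paper: use the scalar formula from Lemma~\ref{23} to see which lines $\C\phi_{t,w}$ survive $J_{2m}^V$-invariance, then invoke transitivity of $K_0^W$, and finally redo the support-and-phase computation for the fixedness under $K_{2m}^W$. However, there are genuine gaps at two places.

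For the first part, the surviving condition is \emph{not} that $w$ is isotropic; the character $k\mapsto\psi_E(N_{E/F}(t)(1-\overline{k_0})\pair{w,w}_W)$ is trivial precisely when $\pair{w,w}_W\in\pp_E^m$ (here $\ord_E(N_{E/F}(t))=-2m$ and $1-\overline{k_0}\in\pp_E^m$, and $\psi_E|_{\oo_E}=\1$). Your phrase ``landing in the isotropic locus'' misstates this weaker, modular condition. More importantly, you never actually carry out the key step that converts this condition into membership in the span of $\omega_\psi(k')\phi_{t,f_n}$. The paper does this in two moves: (i) $\pair{w,w}_W\in\pp_E^m$ and primitivity imply, by transitivity of the hyperspecial $K_0^W$ on primitive vectors of a given norm, that there is $k'\in K_0^W$ with $w\equiv k'f_n\bmod\varpi^m\Gamma_W$; (ii) since $\ord(t)=-m$, this congruence gives $te_0\otimes w - te_0\otimes k'f_n\in A$, whence $\phi_{t,w}=c\,\phi_{t,k'f_n}=c\,\omega_\psi(k')\phi_{t,f_n}$. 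Your argument gestures at transitivity of $K_0^W$ but never establishes (ii), which is exactly what turns the norm condition modulo $\pp_E^m$ into the displayed spanning set. Also, the intermediate computation $N_{E/F}(\pp_E^{-m})=\pp_F^{-m}$ is off by a factor of two; the norm of $\varpi^{-m}$ is $\varpi^{-2m}$.

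For the second part, the phase step is underpowered. Because $N_{E/F}(t)$ has valuation $-2m$, triviality of the phase requires $\pair{(k'-1)f_n,\,k'f_n}_W\in\pp_E^{2m}$, not merely integrality. The bound $(k'-1)f_n\in\varpi^m\Gamma_W$ that you invoke only gives $\pp_E^m$, which is not enough. One needs the extra identity $\pair{(k'-1)f_n,\,k'f_n}_W=-\pair{f_n,k'f_n}_W$ (using unitarity of $k'$ and $\pair{f_n,f_n}_W=0$), together with the $(2n,1)$-entry condition in the definition of $K_{2m}^W$, to place this quantity in $\pp_E^{2m}$. As written, your phase argument has a gap of order $m$.
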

\begin{proof}
As we have seen in the proof of Lemma \ref{23}, 
$k \in J_{2m}^V$ acts on $\phi_{t,w}$ by the character 
\begin{align*}
\xymatrix{
J_{2m}^V \ar@{->>}[r] & 1+\pp_E^m \ar@{->}[r] & \C^\times, \\
k \ar@{|->}[r] & k_0 \ar@{|->}[r] & \psi_E\left( N_{E/F}(t)(1-\overline{k_0}) \pair{w,w}_W \right).
}
\end{align*}
Hence the image in question is equal to the one of  the subspace spanned by $\phi_{t,w}$ with 
$\ord(t) = -m$ and $w \in \Gamma_W \setminus \varpi\Gamma_W$ such that $\pair{w,w}_W \in \pp_E^m$.
It means that 
\[
\pair{w,w}_W \equiv \pair{f_n, f_n}_W \bmod \pp_E^{m}.
\]
Note that
\[
K_0^W = \{g \in \U(V) \;|\; g \Gamma_W = \Gamma_W\}
\]
is a hyperspecial maximal compact subgroup of $\U(W)$. 
Hence there exists $k' \in K_0^W$ such that 
$w \equiv k' \cdot f_n \bmod \varpi^{m}\Gamma_W$.
In particular, we have 
\[
te_0 \otimes w - te_0 \otimes k' \cdot f_n \in A.
\]
Hence we can find $c \in \C^\times$ such that $\phi_{t,w} = c\phi_{t, k' \cdot f_n} = c \cdot \omega_\psi(k') \phi_{t,f_n}$.
This shows the first assertion.
\par

Fix $k' \in K_{2m}^W$.
Since $(k'-1)f_n \in \varpi^{m} \Gamma_W$, we have $(k'-1)(te_0 \otimes f_n) \in A$ for $t \in \pp_E^{-m}$. 
Hence $\Supp(\omega_\psi(k') \phi_{t,f_n}) = \Supp(\phi_{t,f_n})$.
Moreover, 
\begin{align*}
&\omega_\psi(k') \phi_{t,f_n}(te_0 \otimes f_n)
\\&= \omega_\psi(k') \phi_{t,f_n}(k'(te_0 \otimes f_n) - (k'-1)(te_0 \otimes f_n))
\\&= 
\psi\left(
\half{1}\pair{ (k'-1)(te_0 \otimes f_n),  k'(te_0 \otimes f_n)}
\right)
\omega_\psi(k') \phi_{t,f_n}(k'(te_0 \otimes f_n))
\\&= 
\psi_E\left(
N_{E/F}(t) \pair{(k'-1)f_n,k'f_n}_W
\right).
\end{align*}
Since $\pair{(k'-1)f_n,k'f_n}_W = \pair{-f_n,k'f_n}_W \in \pp_E^{2m}$, 
we have $\omega_\psi(k') \phi_{t,f_n}(te_0 \otimes f_n) = 1$. 
Therefore, we conclude that $\omega_\psi(k') \phi_{t,f_n} = \phi_{t,f_n}$ for $k' \in K_{2m}^W$.
By a similar calculation, one can prove that 
$\omega_\psi(k') \phi_{t,f_{-n}} = \phi_{t,f_{-n}}$ for $k' \in {}^tK_{2m}^W$.
This completes the proof.
\end{proof}

\subsection{Existence of $K_{2m}^W$-fixed vectors}
Let $\pi$ be an irreducible $\psi_E$-generic tempered representation of $\U(W)$
with the $L$-parameter $\phi_\pi$ and the central character $\omega_\pi$. 
Consider its theta lift $\sigma = \theta_\psi(\pi)$. 
It is an irreducible generic tempered representation of $\U(V)$
with $L$-parameter $\phi_\sigma = \phi_\pi \chi \oplus \1$.
In particular, $c(\phi_\sigma) = c(\phi_\pi)$ so that 
$\sigma^{K_{2m}^V} \not= 0$ for $2m \geq c(\phi_\pi)$ by Theorem \ref{odd}. 
Since $\omega_\sigma = \omega_\pi$, 
we see that $\sigma^{J_{2m}^V} \not= 0$ if $2m \geq c(\phi_\pi)$ and $\omega_\pi|_{1+\pp_E^m} = \1$. 
\par

Set $\omega_\psi = \omega_{\psi,V,W}$.
By the definition of theta lifts, 
we have a $\U(V) \times \U(W)$-equivariant surjective map
\[
\Phi \colon \omega_\psi \twoheadrightarrow \sigma \boxtimes \pi.
\]

\begin{prop}\label{K-fixed}
Set $2m = c(\phi_\pi)$ or $2m = c(\phi_\pi)+1$. 
Suppose that $2m >0$ and that $\omega_\pi$ is trivial on $1+\pp_E^m$.
For any sign $\epsilon \in \{\pm1\}$, there exists $t \in \pp_E^{-m}$ 
such that $\Phi(\phi_{t,f_{\epsilon n}}) \not= 0$. 
In particular, $\pi^{K_{2m}^W} \not= 0$. 
\end{prop}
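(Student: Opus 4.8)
The plan is to transfer the statement from $\U(V)\cong\U_{2n+1}$ to $\U(W)\cong\U_{2n}$ through the theta map $\Phi$, using Theorem~\ref{odd} as the source of non-vanishing and Waldspurger's Proposition~\ref{WK} together with Lemma~\ref{span} to control $J_{2m}^V$-fixed vectors in the lattice model. First I would record that $\sigma=\theta_\psi(\pi)$ is irreducible tempered generic with $c(\phi_\sigma)=c(\phi_\pi)$ and $\omega_\sigma=\omega_\pi$, so by Theorem~\ref{odd} we have $\dim_\C\sigma^{K_{2m}^V}=1$; since $\omega_\sigma$ is trivial on $E^1\cap(1+\pp_E^m)$, the extra central generator of $J_{2m}^V$ acts trivially on $\sigma^{K_{2m}^V}$, whence $\sigma^{J_{2m}^V}=\sigma^{K_{2m}^V}$ is one-dimensional. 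Because $J_{2m}^V$ is compact, taking $J_{2m}^V$-invariants is exact, so $\Phi$ induces a surjective $\U(W)$-equivariant map $\omega_\psi^{J_{2m}^V}\twoheadrightarrow\sigma^{J_{2m}^V}\boxtimes\pi\cong\pi$; in particular $\Phi$ does not vanish on $\omega_\psi^{J_{2m}^V}=\Sc(A)^{J_{2m}^V}$.

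The main point is the claim that $\Sc(A)^{J_{2m}^V}$ is generated as a $\U(W)$-module by the vectors $\phi_{t,f_n}$ ($t\in\pp_E^{-m}$)---equivalently, since these are $K_{2m}^W$-fixed by Lemma~\ref{span}, by $\omega_\psi^{J_{2m}^V\times K_{2m}^W}$---and likewise with $f_{-n}$ and ${}^tK_{2m}^W$, the two cases being interchanged by an element of $K_0^W$ carrying $f_n$ to $f_{-n}$. To prove this I would apply Proposition~\ref{WK}, whose hypotheses follow from Lemma~\ref{23} (the space $\Sc(A)_{M_{2m}}$ is $J_{2m}^V$-stable, is fixed by $K_{2m}^V$, and contains the nonzero $J_{2m}^V$-fixed vector $\phi_{0,f_n}$), reducing to the inclusion $(\Sc(A)_{M_{2m}})^{J_{2m}^V}\subseteq\U(W)\cdot\{\phi_{t,f_n}\}$. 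This I would establish by descending along the $J_{2m}^V$-stable filtration $\Sc(A)_{M_0}\subset\Sc(A)_{M_2}\subset\cdots\subset\Sc(A)_{M_{2m}}$: Lemma~\ref{span} already identifies the image of $(\Sc(A)_{M_{2m}})^{J_{2m}^V}$ in the top graded piece with the span of the $\omega_\psi(k')\phi_{t,f_n}$, $k'\in K_0^W$, and an analogous computation on each lower piece---where $J_{2m}^V$-invariance forces the skew-hermitian norm $\langle w,w\rangle_W$ of the relevant primitive vectors $w$ to be small enough that $w$ is $K_0^W$-congruent to $f_n$, so that $\phi_{t,w}\in K_0^W\cdot\{\phi_{t',f_n}\}$---would take care of the remaining layers. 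I expect this graded descent, and the accompanying bookkeeping of exactly which $\phi_{t,w}$ survive in $(\Sc(A)_{M_{2k}})^{J_{2m}^V}$, to be the main obstacle.

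Granting the claim, the rest is formal. If $\Phi(\phi_{t,f_n})=0$ for all $t\in\pp_E^{-m}$, then by $\U(W)$-equivariance $\Phi$ vanishes on the subrepresentation these vectors generate, i.e. on $\Sc(A)^{J_{2m}^V}$, contradicting the first paragraph; hence $\Phi(\phi_{t,f_n})\neq0$ for some $t$, and symmetrically for $f_{-n}$. Finally, since $\phi_{t,f_n}$ is $K_{2m}^W$-fixed and $\Phi$ is $\U(W)$-equivariant, $\Phi(\phi_{t,f_n})$ is a nonzero vector in $(\sigma\boxtimes\pi)^{1\times K_{2m}^W}=\sigma\otimes\pi^{K_{2m}^W}$, so $\pi^{K_{2m}^W}\neq0$.
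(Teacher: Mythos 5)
Your opening paragraph is essentially the paper's: reduce to showing that $\Phi$, restricted to $\Sc(A)^{J_{2m}^V}$, does not vanish on a suitable set of $K_{2m}^W$-fixed vectors, using Theorem~\ref{odd} to get $\sigma^{J_{2m}^V}\neq 0$ and exactness of $J_{2m}^V$-invariants. The difficulty is entirely in your second paragraph, and there you have a genuine gap.

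You assert that the graded descent along $\Sc(A)_{M_0}\subset\cdots\subset\Sc(A)_{M_{2m}}$ works because ``$J_{2m}^V$-invariance forces the skew-hermitian norm $\pair{w,w}_W$ of the relevant primitive vectors $w$ to be small enough that $w$ is $K_0^W$-congruent to $f_n$.'' This is false on the lower layers. The computation in Lemma~\ref{23} shows that $k\in J_{2m}^V$ scales $\phi_{t,w}$ by $\psi_E\bigl(N_{E/F}(t)(1-\overline{k_0})\pair{w,w}_W\bigr)$ with $1-\overline{k_0}\in\pp_E^m$. For $\phi_{t,w}$ in the $k$-th layer, i.e.\ $\ord(t)=-k$ with $w$ primitive, $J_{2m}^V$-invariance therefore only imposes $\ord(\pair{w,w}_W)\geq 2k-m$. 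On the other hand, $\phi_{t,w}$ is a scalar multiple of some $\omega_\psi(k')\phi_{t,f_n}$ (with $k'\in K_0^W$) precisely when $w\equiv k'f_n\bmod\varpi^k\Gamma_W$, which forces $\ord(\pair{w,w}_W)\geq k$. Since $2k-m<k$ whenever $k<m$, the invariance condition on lower layers is strictly weaker than what your descent requires, and for $k<m$ there are $J_{2m}^V$-invariant vectors $\phi_{t,w}$ (e.g.\ with $\pair{w,w}_W$ of intermediate valuation) that are not scalar multiples of any $\omega_\psi(k')\phi_{t',f_n}$. Your claimed containment $(\Sc(A)_{M_{2m}})^{J_{2m}^V}\subseteq\U(W)\cdot\{\phi_{t,f_n}\}$ is therefore not established, and I doubt it is provable this way.

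The paper avoids this entirely. Instead of trying to describe $(\Sc(A)_{M_{2m}})^{J_{2m}^V}$, it observes that the lower layers die in $\sigma$: by Lemma~\ref{23}, $\Sc(A)_{M_{2m-2}}$ is pointwise fixed by $K_{2m-2}^V$, so its image under $\Phi$ lands in $\sigma^{K_{2m-2}^V}\boxtimes\pi$, and $\sigma^{K_{2m-2}^V}=0$ because $2m-2<c(\phi_\sigma)$ (Theorem~\ref{odd}). Hence $\Phi\bigl|_{(\Sc(A)_{M_{2m}})^{J_{2m}^V}}$ factors through the projection onto $\Sc(A)_{M_{2m}\setminus M_{2m-2}}$, and only on that \emph{top} layer is Lemma~\ref{span} invoked --- where the invariance condition really does force $\pair{w,w}_W\in\pp_E^m$ and hence the $K_0^W$-congruence to $f_n$. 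This is the key idea missing from your proposal: you do not need to control the invariants in the lower layers, because the representation $\sigma$ itself kills them. I would suggest replacing your second paragraph with that observation.
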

\begin{proof}
We realize $\omega_\psi$ on the lattice model $\Sc(A)$. 
Since $\Sigma \mapsto \Sigma^{J_{2m}^V}$ is an exact functor on 
the category of smooth representations $\Sigma$ of $\U(V)$, 
we obtain a $\U(W)$-equivariant surjective map
\[
\Phi \colon \Sc(A)^{J_{2m}^V} \twoheadrightarrow \sigma^{J_{2m}^V} \boxtimes \pi.
\]
By Proposition \ref{WK} together with Lemma \ref{23}, 
its restriction to $(\Sc(A)_{M_{2m}})^{J_{2m}^V}$ is still nonzero.
Since $\sigma^{K_{2m-2}^V} = 0$, this map factors through the restriction of the projection 
$\Sc(A)_{M_{2m}} \twoheadrightarrow \Sc(A)_{M_{2m} \setminus M_{2m-2}}$.
Hence by Lemma \ref{span}, 
there exists $t \in E^\times$ with $\ord(t) = -m$ such that $\Phi(\phi_{t,f_n}) \not= 0$. 
Since $\phi_{t,f_n}$ is fixed by $J_{2m}^V \times K_{2m}^W$, 
we have $\Phi(\phi_{t,f_n}) \in \sigma^{J_{2m}^V} \boxtimes \pi^{K_{2m}^W}$ 
so that $\pi^{K_{2m}^W} \not= 0$. 
By the same argument, one can show that $\Phi(\phi_{t,f_{-n}}) \not= 0$ for some $t \in \pp_E^{-m}$. 
\end{proof}

\subsection{Proof of Theorem \ref{even} (3)}\label{sec.proof3}
The goal of the rest of this section is to show that $\pi_\psi^{K_{2m}^W} \not= 0$ 
if $2m = c(\phi_\pi)$ or $2m = c(\phi_\pi)+1$ and if $\omega_\pi$ is trivial on $1+\pp_E^m$. 
If $2m = c(\phi_\pi) = 0$, 
then $\pi$ is unramified (with respect to the hyperspecial maximal compact subgroup $K_0^W$), 
and the Casselman--Shalika formula \cite{CS} shows that $\pi_\psi^{K_{0}^W} \not= 0$.
See Remark \ref{W(w)}.
Hence we may assume that $c(\phi_\pi) > 0$ so that $2m > 0$. 
\par

We need further notations.
Set 
\[
X = \bigoplus_{i=1}^n Ee_i,\quad V_0 = E e_0, \quad X^* = \bigoplus_{i=1}^n Ee_{-i}.
\]
Hence $V = X \oplus V_0 \oplus X^*$. 
For $a \in \GL(X)$, $b \in \Hom(V_0,X)$ and $c \in \Hom(X^*,X)$, 
we define $a^* \in \GL(X^*)$, $b^* \in \Hom(X^*,V_0)$ and $c^* \in \Hom(X^*,X)$ so that
\[
\pair{ax, x'}_V = \pair{x,a^*x'}_V, 
\quad 
\pair{be_0, x'}_V = \pair{e_0,b^*x'}_V, \\
\quad
\pair{cx', x''}_V = \pair{x', cx''}_V
\]
for $x \in X$ and $x',x'' \in X^*$. 
For $a \in \GL(X)$, $b \in \Hom(V_0,X)$ and 
\[
c \in \Herm(X^*,X) = \{c \in \Hom(X^*,X) \;|\; c^*=-c\}, 
\]
we put
\begin{align*}
\bm_X(a) &= \begin{pmatrix}
a && \\
&\1_{V_0}& \\
&& (a^*)^{-1}
\end{pmatrix}, \\
\quad 
\bn_1(b) &= 
\begin{pmatrix}
\1_X & b & -\half{1}bb^*\\
&\1_{V_0}& b^* \\
&& \1_{X^*}
\end{pmatrix}, \\
\bn_2(c) &= 
\begin{pmatrix}
\1_X &  & c \\
&\1_{V_0}& \\
&& \1_{X^*}
\end{pmatrix}.
\end{align*}
These are elements in $\U(V)$.
\par

Similarly, set
\[
Y = \bigoplus_{i=1}^n Ef_i, \quad Y^* = \bigoplus_{i=1}^n Ef_{-i}
\]
so that $W = Y \oplus Y^*$. 
For $a \in \GL(Y)$ and $c \in \Hom(Y^*,Y)$, 
we define $a^* \in \GL(X^*)$ and $c^* \in \Hom(Y^*,Y)$ so that
\[
\pair{ay, y'}_W = \pair{y,a^*y'}_W, 
\quad 
\pair{cy', y''}_W = \pair{y', cy''}_W
\]
for $y \in Y$ and $y',y'' \in Y^*$. 
For $a \in \GL(Y)$ and 
\[
c \in \Herm(Y^*,Y) = \{c \in \Hom(Y^*,Y) \;|\; c^*=-c\}, 
\]
we put
\[
\bm_Y(a) = \begin{pmatrix}
a & \\
& (a^*)^{-1}
\end{pmatrix}, 
\quad 
\bn(c) = 
\begin{pmatrix}
\1_Y & c \\
& \1_{Y^*}
\end{pmatrix}.
\]
These are elements in $\U(W)$. 
\par

Define $a_\delta \in \GL(X)$ by 
\[
a_\delta \colon e_i \mapsto \delta^{-i} e_i
\]
for $-n \leq i \leq n$.
If we fix a nonzero Whittaker functional $l_\sigma \in \Hom_{N_{2n+1}}(\sigma, \psi_E)$ for $\sigma$, 
then $l_{\sigma}' = l_\sigma \circ \sigma(\bm_X(a_\delta))$ is a nonzero Whittaker functional 
with respect to the character $\psi_{E}^\delta \colon N_{2n+1} \rightarrow \C^\times$
given by 
\[
\psi_{E}^\delta(u) = 
\psi_E\left( 
\delta^{-1} \sum_{i=1}^{n} \pair{u e_{i-1}, e_{-i}}_V 
\right). 
\]
This is the generic character considered in \cite{Cheng}. 
\par

Now we fix $t \in E^\times$ with $\ord(t) = -m$ such that 
$\Phi(\phi_{t, f_{-n}}) \not= 0$.
This belongs to $\sigma^{J_{2m}^V} \boxtimes \pi$. 
Note that $\bm_X(t \cdot \1_X) K_{2m}^V \bm_X(t \cdot \1_X)^{-1}$ is the compact group $K_{n,2m}$ 
considered in \cite{Cheng}.
In particular, the Whittaker functional
\[
l'_{\sigma,t} = 
l'_\sigma \circ \sigma(\bm_X(t \cdot \1_X))
= l_\sigma \circ \sigma(\bm_X(a_\delta t))
\]
with respect to 
\[
\psi_{E,t}^\delta \colon N_{2n+1} \ni u \mapsto 
\psi_E^{\delta}(\bm_X(t \cdot \1_X) \cdot u \cdot \bm_X(t \cdot \1_X)^{-1}) \in \C^\times
\]
is nonzero on $\sigma^{K_{2m}^V}$ by \cite[Theorem 1.4, Lemma 7.5]{Cheng}. 
Therefore, the image $\phi_{t, f_{-n}}$ under the composition of $N_{2n+1} \times \U(W)$-equivariant maps 
\[
\xymatrix{
\omega_{\psi} \ar@{->}[r]^-{\Phi} & \sigma \boxtimes \pi \ar@{->}[rr]^-{l'_{\sigma,t} \otimes \id} 
&& \psi_{E,t}^\delta \boxtimes \pi 
}
\]
is nonzero.
\par

By the same argument as the proof of \cite[Proposition 2.3]{MR}, 
one can prove that
the maximal quotient of $\omega_\psi$ on which $N_{2n+1}$ acts by $\psi_{E,t}^\delta$
is isomorphic to the compact induction $\ind_{N_{2n}'}^{\U(W)}(\mu)$, 
where $N_{2n}'$ is the unipotent radical of the Borel subgroup of $\U(W)$
stabilizing the flag 
\[
E f_1 \subset E f_1 \oplus E f_2 \subset \dots \subset E f_1 \oplus \dots \oplus E f_n = Y, 
\]
and $\mu$ is a character of $N_{2n}'$ given by
\[
\mu(u) = \psi_E \left( \sum_{i=1}^n \pair{u f_{i+1}, f_{-i}} + N_{E/F}(t)\pair{u f_{-n}, f_{-n}} \right).
\]
Here, we note that $N_{2n}'$ differs from $N_{2n}$ defined in Section \ref{sec.rep}. 
\par

Hence the map 
\[
\xymatrix{
\omega_{\psi} \ar@{->}[r]^-{\Phi} & \sigma \boxtimes \pi \ar@{->}[rr]^-{l_{\sigma,t}' \otimes \id} 
&& \psi_{E,t}^\delta \boxtimes \pi 
}
\]
factors through $\omega_\psi \rightarrow \ind_{N_{2n}'}^{\U(W)}(\mu)$.
Namely, we have a nonzero $\U(W)$-equivariant map
\[
\ind_{N_{2n}'}^{\U(W)}(\mu) \rightarrow \pi. 
\]
The following is a key lemma, which will be proven in Section \ref{sec.key} below. 

\begin{lem}\label{supp}
Let $\tl{F}_{t,f_{-n}} \in \ind_{N_{2n}'}^{\U(W)}(\mu)$ be the image of $\phi_{t, f_{-n}} \in \Sc(A)$.
Then $\tl{F}_{t,f_{-n}}$ is right ${}^tK_{2m}^W$-invariant and 
\[
\Supp(\tl{F}_{t,f_{-n}}) = N_{2n}' \cdot {}^tK_{2m}^W. 
\]
\end{lem}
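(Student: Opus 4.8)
The plan is to dispose of the right-invariance at once and then reduce everything to a single support inclusion. The right ${}^tK_{2m}^W$-invariance is immediate: by Lemma~\ref{span} the vector $\phi_{t,f_{-n}}\in\Sc(A)$ is fixed by ${}^tK_{2m}^W$, and the quotient map $\omega_\psi\to\ind_{N_{2n}'}^{\U(W)}(\mu)$ is $\U(W)$-equivariant, so $\tl{F}_{t,f_{-n}}$ is fixed by ${}^tK_{2m}^W$ as well. Since $\tl{F}_{t,f_{-n}}$ also transforms on the left under $(N_{2n}',\mu)$, its support is a union of double cosets $N_{2n}'\,g\,{}^tK_{2m}^W$, and it is nonzero because, as noted above, the further map $\ind_{N_{2n}'}^{\U(W)}(\mu)\to\pi$ carries it to a nonzero vector of $\pi$. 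As $N_{2n}'\cdot{}^tK_{2m}^W$ is one double coset, it therefore suffices to prove
\[
\Supp(\tl{F}_{t,f_{-n}})\subseteq N_{2n}'\cdot{}^tK_{2m}^W,
\]
after which the nonvanishing of $\tl{F}_{t,f_{-n}}$ forces equality.

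To prove this inclusion, I would make the Mao--Rallis identification of \cite[Proposition~2.3]{MR} explicit on the vector $\phi_{t,f_{-n}}$. First I would change the model of the Weil representation: passing from the lattice model $(\omega_\psi,\Sc(A))$ of $\tl\Sp(\W)$ to a mixed model adapted to the Siegel-type parabolic of $\U(V)$ stabilizing the maximal isotropic subspace $X$ (a Schr\"odinger-type model along the $X\otimes W$-directions, with a lattice model kept on the remaining directions), and computing the partial Fourier transform that carries $\phi_{t,f_{-n}}$ into this model. Re-running the unfolding of \cite[Proposition~2.3]{MR} in this model, the twisted Jacquet quotient of $\omega_\psi$ along $(N_{2n+1},\psi_{E,t}^\delta)$ is realized by integrating out the unipotent variables $\bn_1(b),\bn_2(c)$ and the maximal unipotent of $\GL(X)$; this attaches to each $\phi$ an explicit function $F_\phi$ on $\U(W)$ transforming under $N_{2n}'$ by $\mu$, and I would record the resulting formula for $F_{\phi_{t,f_{-n}}}$.

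The support computation then becomes bookkeeping with this explicit integral. Since $\phi_{t,f_{-n}}$ is supported on $A+te_0\otimes f_{-n}$ with $\ord(t)=-m$, its image in the mixed model is concentrated on a single lattice coset, so $F_{\phi_{t,f_{-n}}}(g)$ equals, up to a character value, the volume of the set of unipotent parameters for which $\omega_\psi(g)\phi_{t,f_{-n}}$ stays on that coset with the defining character trivial. Substituting the Iwasawa decomposition of $\U(W)$ along the Borel subgroup with unipotent radical $N_{2n}'$, and using the explicit formulas for $\omega_\psi(\bm_Y(a))$ and $\omega_\psi(\bn(c))$ in the lattice model, one finds that nonvanishing of $F_{\phi_{t,f_{-n}}}(g)$ forces the torus part of $g$ to have order controlled by $\ord(t)=-m$ and the unipotent part to be absorbed into $N_{2n}'$ together with ${}^tK_{2m}^W$, which yields $g\in N_{2n}'\cdot{}^tK_{2m}^W$. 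The main obstacle is this middle step: carrying out the model change cleanly enough to obtain a workable closed formula for $F_{\phi_{t,f_{-n}}}$, and tracking how the single invariant $\ord(t)=-m$ threads through the Fourier transforms and the unipotent integrations so as to produce exactly the congruence subgroup ${}^tK_{2m}^W$, with neither a coarser nor a finer level appearing; once that formula is in hand, the displayed inclusion is a comparison of lattices.
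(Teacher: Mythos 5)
Your opening reductions are sound and match the paper: right ${}^tK_{2m}^W$-invariance of $\tl{F}_{t,f_{-n}}$ follows directly from Lemma~\ref{span} and $\U(W)$-equivariance; the support is a union of double cosets $N_{2n}'\,g\,{}^tK_{2m}^W$; and nonvanishing (which is supplied by the surrounding argument via Proposition~\ref{K-fixed} and Cheng's result) upgrades the inclusion $\Supp(\tl{F}_{t,f_{-n}})\subseteq N_{2n}'\cdot{}^tK_{2m}^W$ to equality. Passing to a mixed model is also the paper's strategy, though the paper's mixed model is $\Sc(X^*\otimes W)\otimes\Sc(V_0\otimes Y^*)$ — Schr\"odinger on both factors, not ``lattice on the remaining directions'' as you describe — and $\phi_{t,f_{-n}}$ corresponds there to the single vector $\rho(te_0\otimes f_{-n},0)^{-1}(\varphi_1^0\otimes\varphi_2^0)$.

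The genuine gap is in what the support computation actually yields, and the mechanism that closes it. You assert that nonvanishing of the resulting function at $g$ ``forces the torus part of $g$ to have order controlled by $\ord(t)=-m$,'' and that this plus absorbing the unipotent gives $g\in N_{2n}'\cdot{}^tK_{2m}^W$. This is not what happens, and the misdescription hides the hard step. The mixed-model formula for the intermediate function $F$ (left $N_{2n,S}$-equivariant, before averaging over $N_{2n,S}\backslash N_{2n}'$) only pins down $g$ up to $N_{2n,S}\cdot\bm_Y(a)\cdot{}^tK_{2m}^W$ with $a^{-1}$ integral and $a^*f_{-n}\equiv f_{-n}\bmod\bigoplus\pp_E^m f_{-i}$; writing $a=a_d a_0$ with $a_d=\diag(\varpi^{\lambda_1},\dots,\varpi^{\lambda_n})$, this gives $\lambda_i\le 0$ and $\lambda_n=0$, but leaves $\lambda_1,\dots,\lambda_{n-1}$ arbitrarily negative. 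No choice of $\ord(t)$ constrains them at the $F$-level. The crucial additional argument is carried out at the $\tl{F}$-level: one exploits simultaneous left $(N_{2n}',\mu)$-equivariance and right ${}^tK_{2m}^W$-invariance by inserting the specific elements $u_i\in N_{2n}'\cap{}^tK_{2m}^W$ (sending $f_i\mapsto f_i+xf_{i-1}$, $x\in\oo_E$), so that $\tl{F}(\bm_Y(a_d))=\mu(\bm_Y(a_d)u_i\bm_Y(a_d)^{-1})\tl{F}(\bm_Y(a_d))$ forces $\psi_E(\varpi^{\lambda_{i-1}-\lambda_i}x)=1$ for all $x\in\oo_E$, hence $\lambda_{i-1}\ge\lambda_i$; combined with $\lambda_i\le 0=\lambda_n$ this pins all $\lambda_i=0$. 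This is where the proof actually closes, and your sketch does not contain it.

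A secondary omission: you reduce via ``the Iwasawa decomposition of $\U(W)$ along the Borel subgroup with unipotent radical $N_{2n}'$,'' but the paper's analysis requires first Iwasawa with respect to the Siegel parabolic $Q_{2n}$ and then a Bruhat decomposition of $K_0^W$ over the residue field, splitting $K_0^W$ into $(K_S\cap Q_{2n})\,{}^tK_M$ and $(K_S\cap Q_{2n})\,J_{2n}\,{}^tK_M$. The big cell $g=\bm_Y(a)J_{2n}k$ has to be ruled out by a separate computation (the argument $te_0\otimes(a^*f_{-n}-y^*)$ cannot lie in the integral lattice unless $f_{-n}\in\bigoplus\pp_E f_{-i}$, a contradiction); this case does not simply disappear under an Iwasawa-along-the-Borel reduction. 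Without both the big-cell elimination and the $u_i$-conjugation argument, the proposal as written would not reach the stated support equality.
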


Note that having a $\U(W)$-equivariant map 
\[
\ind_{N_{2n}'}^{\U(W)}(\mu) \rightarrow \pi
\]
is equivalent to giving a $\U(W)$-equivariant map 
\[
\pi^\vee \rightarrow \Ind_{N_{2n}'}^{\U(W)}(\mu^{-1}). 
\]
These are related as follows.
Suppose that $\ind_{N_{2n}'}^{\U(W)}(\mu) \ni \tl{F} \mapsto v \in \pi$ 
corresponds to $\pi^\vee \ni v' \mapsto W \in \Ind_{N_{2n}'}^{\U(W)}(\mu^{-1})$.
Then 
\[
(v,v')_\pi = \int_{N_{2n}' \bs \U(W)} \tl{F}(g) W(g) dg.
\]
\par

By Lemma \ref{supp}, 
there exists $\tl{F} \in \ind_{N_{2n}'}^{\U(W)}(\mu)$ such that 
\begin{itemize}
\item
its image $v$ in $\pi$ is nonzero; 
\item
$\tl{F}$ is right ${}^tK_{2m}^W$-invariant; 
\item
$\Supp(\tl{F}) = N_{2n}' \cdot {}^tK_{2m}^W$.
\end{itemize}
Hence $v \in \pi^{{}^tK_{2m}^W}$. 
One can take $v' \in (\pi^\vee)^{{}^tK_{2m}^W}$ such that $(v,v')_\pi \not= 0$.
Let $W \in \Ind_{N_{2n}'}^{\U(W)}(\mu^{-1})$ be the image of $v'$. 
Then $W$ is right ${}^tK_{2m}^W$-invariant, and 
\[
0 \not= (v,v')_\pi = \int_{N_{2n}' \bs \U(W)} \tl{F}(g) W(g) dg
= c\tl{F}(\1) W(\1)
\]
for some constant $c > 0$.
Hence $W(\1) \not= 0$.
Moreover, since $\bv(0,0;z) \in N_{2n}'$, we have
\[
W(\bv(0,0;z)) = \mu^{-1}(\bv(0,0;z)) W(\1) = \psi^{-1}(N_{E/F}(t)z) W(\1)
\]
for $z \in F$.
Therefore, via $v' \mapsto W \mapsto W(\1)$, we conclude that 
\[
(\pi^\vee)^{{}^tK_{2m}^W}_{\psi'^{-1}} \not= 0, 
\]
where we put $\psi'(z) = \psi(N_{E/F}(t)z)$.
Since 
\[
{}^tK_{2m}^W = 
\begin{pmatrix}
t && \\ 
&\1_{2n-2}& \\
&&\overline{t}^{-1}
\end{pmatrix}^{-1}
K_{2m}^W
\begin{pmatrix}
t && \\ 
&\1_{2n-2}& \\
&& \overline{t}^{-1}
\end{pmatrix},
\]
as in Section \ref{sec.jacobi}, 
we have 
\[
(\pi^\vee)^{K_{2m}^W}_{\psi^{-1}} \cong (\pi^\vee)^{{}^tK_{2m}^W}_{\psi'^{-1}} \not= 0.
\]
\par

Since $\pi$ is $\psi_E$-generic if and only if $\pi^\vee$ is $\psi_E^{-1}$-generic, 
by replacing $\pi$ and $\psi$ with $\pi^\vee$ and $\psi^{-1}$, respectively,
we conclude that 
\[
\pi^{K_{2m}^W}_\psi \not= 0.
\]
Therefore, Theorem \ref{even} (3) is reduced to proving Lemma \ref{supp}.

\subsection{Mixed model}
To show Lemma \ref{supp}, 
we review the argument in the proof of \cite[Proposition 2.3]{MR}. 
For this, we use another model of the Weil representation $\omega_\psi = \omega_{\psi,V,W}$ of $\U(V) \times \U(W)$. 
It is known that the Weil representation $\omega_\psi$ can be realized on the space 
$\Sc(X^* \otimes W) \otimes \Sc(V_0 \otimes Y^*)$, 
which is called a \emph{mixed model}. 
See e.g., \cite[Section 7.4]{GI}.
Let us recall some formulas for the action of $\U(V) \times \U(W)$ on this space. 
\par

For $\varphi_1 \otimes \varphi_2 \in \Sc(X^* \otimes W) \otimes \Sc(V_0 \otimes Y^*)$
and $(x,y) \in (X^* \otimes W) \times (V_0 \otimes Y^*)$, 
\begin{align*}
&\omega_\psi(g)(\varphi_1 \otimes \varphi_2)(x,y) 
= \varphi_1(g^{-1}x) \cdot \omega_\psi^0(g)\varphi_2(y), 
\quad g \in \U(W), \\
&\omega_\psi(h_0)(\varphi_1 \otimes \varphi_2)(x,y)
= \varphi_1(x) \cdot \omega_\psi^0(h_0)\varphi_2(y), 
\quad h_0 \in \U(V_0), \\
&\omega_\psi(\bm_X(a))(\varphi_1 \otimes \varphi_2)(x,y)
= \chi_W(\det a) |\det a|^n \varphi_1(a^* x) \cdot \varphi_2(y),
\quad a \in \GL(X), \\
&\omega_\psi(\bn_1(b))(\varphi_1 \otimes \varphi_2)(x,y)
= \varphi_1(x) \cdot \rho_\psi^0(b^*x, 0)\varphi_2(y), 
\quad b \in \Hom(V_0,X), \\
&\omega_\psi(\bn_2(c))(\varphi_1 \otimes \varphi_2)(x,y)
= \psi\left(\half{1}\pair{cx,x}\right) \varphi_1(x) \cdot \varphi_2(y), 
\quad c \in \Herm(X^*,X).
\end{align*}
Here, $\Sc(V_0 \otimes Y^*)$ is regarded as the Schr{\"o}dinger model of 
\begin{itemize}
\item
the irreducible representation $\rho_\psi^0$ of the Heisenberg group $H(V_0 \otimes W)$ on $\Sc(V_0 \otimes Y^*)$ 
with the central character $\psi$; 
and 
\item
the Weil representation $\omega_\psi^0$ of $\U(V_0) \times \U(W)$.
\end{itemize}
Hence, for $\varphi_2 \in \Sc(V_0 \otimes Y^*)$ and $y \in V_0 \otimes Y^*$, 
we have
\[
\rho_\psi^0((y_+ + y_-,t))\varphi_2(y) 
= \psi\left( t + \pair{y,y_+} + \half{1}\pair{y_-,y_+} \right) \varphi_2(y+y_-)
\]
for $y_+ \in V_0 \otimes Y$ and $y_- \in V_0\otimes Y^*$, 
and
\begin{align*}
&\omega_\psi^0(\bm_Y(a))\varphi_2(y) 
= \chi(\det a)|\det a|^\half{1}\varphi(a^*y), 
\quad a \in \GL(Y), \\
&\omega_\psi^0(\bn(c))\varphi_2(y)
= \psi\left(\half{1} \pair{cy,y}\right) \varphi_2(y)
\quad c \in \Herm(Y^*,Y). 
\end{align*}
Moreover, $\omega_\psi^0(J_{2n})\varphi_2$ is given by a Fourier transform of $\varphi_2$.
For more precision, see \cite[Section 7.4]{GI}.
\par

For $\varphi_1 \otimes \varphi_2 \in \Sc(X^* \otimes W) \otimes \Sc(V_0 \otimes Y^*)$, 
define 
\[
F_{\varphi_1 \otimes \varphi_2}(g) 
= \varphi_1(g^{-1} x_0) \cdot \omega_\psi^0(g)\varphi_2(y_0), 
\]
where we set 
\[
x_0 = \sum_{i=1}^n \frac{1}{2\delta} e_{-i} \otimes f_{n+1-i}, 
\quad
y_0 = t e_0 \otimes f_{-n}.
\]
Let $Q_{2n} = M_{2n,S}N_{2n,S}$ 
be the Siegel parabolic subgroup of $\U(W)$ stabilizing $Y$, 
where $M_{2n,S} = \{\bm_Y(a) \;|\; a \in \GL(Y)\}$ is its Levi subgroup, 
and $N_{2n,S}$ is its unipotent radical.
Note that $N_{2n,S} \subset N_{2n}'$.
We regard $\mu$ as a character of $N_{2n,S}$ by the restriction.
For $u \in N_{2n,S}$, since $u^{-1}x_0 = x_0$ and 
\begin{align*}
\psi\left(\half{1}\pair{uy_0,y_0}\right) 
&=\psi_E\left(\pair{te_0, te_0}_V \pair{uf_{-n},f_{-n}}_W \right) 
\\&= \psi_E\left(N_{E/F}(t) \pair{uf_{-n},f_{-n}}_W \right) = \mu(u), 
\end{align*}
we see that $F_{\varphi_1 \otimes \varphi_2}(g) \in \ind_{N_{2n,S}}^{\U(W)}(\mu)$.
Note that $\bn_2(c)$ acts trivially on $F_{\varphi_1 \otimes \varphi_2}$ for $c \in \Herm(X^*,X)$ 
since $Y$ is totally isotropic. 
On the other hand, for $b \in \Hom(V_0,X)$, 
since $\bn_1(b)$ commutes with $g \in \U(W)$, 
we see that 
\begin{align*}
F_{\omega_\psi(\bn_1(b))(\varphi_1 \otimes \varphi_2)}(g)
&= 
\omega_\psi(g) \circ \omega_\psi(\bn_1(b)) (\varphi_1\otimes \varphi_2) (x_0,y_0) 
\\&=
\omega_\psi(\bn_1(b)) \circ \omega_\psi(g) (\varphi_1\otimes \varphi_2) (x_0,y_0) 
\\&=
\rho_\psi^0(b^*x_0,0) \circ \omega_\psi(g) (\varphi_1\otimes \varphi_2) (x_0,y_0)
\\&=
\psi(\pair{y_0,b^*x_0}) \omega_\psi(g) (\varphi_1\otimes \varphi_2) (x_0,y_0). 
\end{align*}
Since $\overline{\delta} = -\delta$ and $\pair{f_{-n},f_n}_W = -1$, we have
\begin{align*}
\psi(\pair{y_0,b^*x_0}) 
&= \psi_E\left(
\sum_{i=1}^n \pair{te_0,b^* \delta^{-1} e_{-i}}_V \pair{f_{-n}, f_{n+i-1}}_W 
\right)
\\&= \psi_E( \delta^{-1} t \pair{be_0,e_{-1}}_V) = \psi_{E,t}^\delta(\bn_1(b)). 
\end{align*}
Hence $\bn_1(b)$ acts on $F_{\varphi_1 \otimes \varphi_2}$ by $\psi_{E,t}^\delta$.
\par

Define a map 
\[
\ind_{N_{2n,S}}^{\U(W)}(\mu) \rightarrow \ind_{N_{2n}'}^{\U(W)}(\mu)
\]
by 
\[
F \mapsto \tl{F}(g) = \int_{N_{2n,S} \bs N_{2n}'}F(ug) \mu(u)^{-1} du.
\]
Then by the same argument as in \cite[Proposition 2.3]{MR}, 
one can prove that the map 
$\varphi_1 \otimes \varphi_2 \mapsto \tl{F}_{\varphi_1 \otimes \varphi_2}$
realizes an isomorphism between 
the maximal quotient of $\omega_\psi$ on which $N_{2n+1}$ acts by $\psi_{E,t}^\delta$
and $\ind_{N_{2n}'}^{\U(W)}(\mu)$.

\subsection{Proof of Lemma \ref{supp}}\label{sec.key}
In this subsection, we prove Lemma \ref{supp}.
To do this, we relate two models of the Weil representation.
\par

Let $\varphi_1^0 \in \Sc(X^* \otimes W)$ and $\varphi_2^0 \in \Sc(V_0 \otimes Y^*)$ be the characteristic functions of 
\[
\left(\bigoplus_{i=1}^n \oo_Ee_{-i} \right) \otimes \left( \bigoplus_{i=1}^n \oo_Ef_{i} \oplus \bigoplus_{i=1}^n \oo_Ef_{-i} \right), 
\quad \oo_E e_0 \otimes \left( \bigoplus_{i=1}^n \oo_Ef_{-i} \right),
\]
respectively. 
Then the action $\rho = \rho_\psi$ of the Heisenberg group $H(\W)$ on $\varphi_1^0 \otimes \varphi_2^0$ satisfies that 
\[
\rho(a,t)(\varphi_1^0 \otimes \varphi_2^0) = \psi(t) \cdot \varphi_1^0 \otimes \varphi_2^0
\]
for $(a,t) \in A \oplus F$.
Moreover, the lattice model $\Sc(A)$ and the mixed model $\Sc(X^* \otimes W) \otimes \Sc(V_0 \otimes Y^*)$ are related by the isomorphism
\begin{align*}
\Sc(A) &\xrightarrow{\sim} \Sc(X^* \otimes W) \otimes \Sc(V_0 \otimes Y^*),\\
\phi &\mapsto \int_{(A \oplus F) \bs H(\W)} \phi(h) \rho(h)^{-1}(\varphi_1^0 \otimes \varphi_2^0)(x,y) dh.
\end{align*}
In particular, $\phi_{t,f_{-n}} \in \Sc(A)$ corresponds to 
\[
\rho(t e_0 \otimes f_{-n},0)^{-1} (\varphi_1^0 \otimes \varphi_2^0)(x,y)
= \varphi_1^0(x) \cdot \rho_\psi^0(t e_0 \otimes f_{-n},0)^{-1} \varphi_2^0(y)
\]
in $\Sc(X^* \otimes W) \otimes \Sc(V_0 \otimes Y^*)$
since $\Supp(\phi_{t,f_{-n}}) = (A + te_0 \otimes f_{-n}) \oplus F$.
Therefore, under the map 
\[
\Sc(A) \rightarrow \ind_{N_{2n}'}^{\U(W)}(\mu)
\]
obtained above, 
the image of $\phi_{t,f_{-n}}$ is 
$\tl{F}_{\rho(t e_0 \otimes f_{-n},0)^{-1} (\varphi_1^0 \otimes \varphi_2^0)}$. 
\par

Now we prove Lemma \ref{supp}. 
\begin{proof}[Proof of Lemma \ref{supp}]
First, we consider $F_{\rho(t e_0 \otimes f_{-n},0)^{-1} (\varphi_1^0 \otimes \varphi_2^0)}$. 
Note that it is left $N_{2n,S}$-invariant and right ${}^tK_{2m}^W$-invariant.
We claim that 
if 
\[
F_{\rho(t e_0 \otimes f_{-n},0)^{-1} (\varphi_1^0 \otimes \varphi_2^0)}(g) \not= 0,
\] 
then 
\[
g \in N_{2n,S} \cdot \bm_Y(a) \cdot {}^tK_{2m}^W
\]
for some $a \in \GL(Y) \cong \GL_n(E)$ such that 
$a^{-1} \in \M_n(\oo_E)$ and 
\[
a^*f_{-n}-f_{-n} \in \bigoplus_{i=1}^n \pp_E^m f_{-i}.
\]
\par

By the Iwasawa decomposition, we have 
$\U(W) = Q_{2n} K_0^W$.
Let $K_S$ and $K_M$ be subgroups of $K_0^W$ defined by 
\[
K_S = 
\bordermatrix{
&n&n \cr
n&\oo_E&\oo_E\cr
n&\pp_E&\oo_E
} \cap \U_{2n}, \quad
K_M = 
\bordermatrix{
&1&2n-2&1 \cr
1&\oo_E&\oo_E&\oo_E\cr
2n-2&\pp_E&\oo_E&\oo_E\cr
1&\pp_E&\pp_E&\oo_E
} \cap \U_{2n}.
\]
By the Bruhat decomposition for a finite unitary group over $\oo_F/\pp_F$, we have 
\begin{align*}
K_0^W &= K_S K_M \cup K_S J_{2n}^{-1} K_M
\\&= (K_S \cap Q_{2n}) K_M \cup (K_S \cap Q_{2n}) J_{2n}^{-1} K_M.
\end{align*}
Since $J_{2n} \in K_0^W$ and $J_{2n}^{-1} K_M J_{2n} = {}^t K_M$, 
by the multiplication of $J_{2n}$ from the right, 
we have 
\[
K_0^W = (K_S \cap Q_{2n}) J_{2n} {}^tK_M \cup (K_S \cap Q_{2n}) {}^tK_M.
\]
Hence 
\begin{align*}
\U(W) &= Q_{2n} J_{2n} {}^tK_M \cup Q_{2n} {}^tK_M
\\&= N_{2n,S}M_{2n,S} J_{2n} {}^tK_M \cup N_{2n,S}M_{2n,S} {}^tK_M.
\end{align*}
Therefore, we may assume that 
$g = \bm_Y(a)J_{2n}k$ or $g = \bm_Y(a)k$
for some $a \in \GL(Y)$ and $k \in {}^tK_M$.
\par

Assume that $g = \bm_Y(a)J_{2n}k$ is in the former case.
Since $\varphi_1^0$ and $\varphi_2^0$ are fixed by $K_{0}^W$, 
and since $\omega_\psi^0(g) \circ \rho_\psi^0(h) \circ \omega_\psi^0(g)^{-1} = \rho_\psi^0(gh)$, 
we have
\begin{align*}
&F_{\rho(t e_0 \otimes f_{-n},0)^{-1} (\varphi_1^0 \otimes \varphi_2^0)}(g)
\\&= \omega_\psi(g) \circ 
\rho(t e_0 \otimes f_{-n},0)^{-1} (\varphi_1^0 \otimes \varphi_2^0)(x_0,y_0)
\\&= \varphi_1(g^{-1}x_0) \cdot 
\omega_\psi^0(g) \rho_\psi^0(t e_0 \otimes f_{-n},0)^{-1}\varphi_2^0(y_0)
\\&= \varphi_1(a^{-1}x_0) \cdot 
\omega_\psi^0(\bm_Y(a)) \rho_\psi^0(t e_0 \otimes J_{2n} k f_{-n},0)^{-1} \varphi_2^0(y_0). 
\end{align*}
Note that $\varphi_1(a^{-1}x_0) \not= 0$ if and only if $a^{-1} \in \M_n(\oo_E)$. 
On the other hand, since $k \in {}^tK_M$, 
if we write $J_{2n} k f_{-n} = y+y^*$ with $y \in Y$ and $y^* \in Y^*$, 
then $y^* \in \oplus_{i=1}^n \pp_E f_{-i}$. 
Up to a nonzero constant, 
$\omega_\psi^0(\bm_Y(a)) \rho_\psi^0(t e_0 \otimes J_{2n} k f_{-n},0)^{-1} \varphi_2^0(y_0)$
is equal to 
\[
\varphi_2^0( te_0 \otimes (a^*f_{-n} - y^*) ).
\]
If this is nonzero, then we must have 
$t(a^*f_{-n} - y^*) \in \oplus_{i=1}^n \oo_E f_{-i}$.
When $a^{-1} \in \M_n(\oo_E)$, this implies that
\[
f_{-n} \in (a^*)^{-1}y^* + \oplus_{i=1}^n \pp_E^m f_{-i} \subset \oplus_{i=1}^n \pp_E f_{-i}.
\]
This is impossible.
Hence we have $F_{\rho(t e_0 \otimes f_{-n},0)^{-1} (\varphi_1^0 \otimes \varphi_2^0)}(g) = 0$.
\par

Next, we assume that $g = \bm_Y(a)k$ is in the latter case.
By the Iwahori decomposition, we may further assume that $kf_{-n} = f_{-n}$.
Then 
\begin{align*}
&F_{\rho(t e_0 \otimes f_{-n},0)^{-1} (\varphi_1^0 \otimes \varphi_2^0)}(g)
\\&= \omega_\psi(g) \circ \rho(t e_0 \otimes f_{-n},0)^{-1} (\varphi_1^0 \otimes \varphi_2^0)(x_0,y_0)
\\&= \varphi_1(g^{-1}x_0) \cdot 
\omega_\psi^0(g) \rho_\psi^0(t e_0 \otimes f_{-n})^{-1}\varphi_2^0(y_0)
\\&= \varphi_1(a^{-1}x_0) \cdot 
\omega_\psi^0(\bm_Y(a)) \rho_\psi^0(t e_0 \otimes kf_{-n})^{-1} \varphi_2^0(y_0)
\\&= \varphi_1(a^{-1}x_0) \cdot 
\omega_\psi^0(\bm_Y(a)) \rho_\psi^0(t e_0 \otimes f_{-n})^{-1} \varphi_2^0(y_0). 
\end{align*}
Up to a nonzero constant, it is equal to 
\[
\varphi_1(a^{-1}x_0) \cdot \varphi_2^0( te_0 \otimes (a^*f_{-n}-f_{-n}) ).
\]
If this is nonzero, then $a^{-1} \in \M_n(\oo_E)$ and 
\[
a^*f_{-n}-f_{-n} \in \bigoplus_{i=1}^n \pp_E^m f_{-i}. 
\]
This proves the claim. 
\par

Now we consider $\tl{F}_{\rho(t e_0 \otimes f_{-n},0)^{-1} (\varphi_1^0 \otimes \varphi_2^0)}$.
Note that it is left $N_{2n}'$-invariant and right ${}^tK_{2m}^W$-invariant.
Suppose that $\tl{F}_{\rho(t e_0 \otimes f_{-n},0)^{-1} (\varphi_1^0 \otimes \varphi_2^0)}(g) \not= 0$. 
By the claim, 
we may assume that $g = \bm_Y(a)$ with $a \in \GL(Y)$ satisfying the conditions in the claim.
By the Iwasawa decomposition, 
we may further assume that $a = a_d a_0$ such that 
\begin{itemize}
\item
$\pair{a_d f_i, f_{-j}}_W = \varpi^{\lambda_i} \delta_{i,j}$ for some $\lambda_i \in \Z$; 
\item
$a_0 \in \GL_n(\oo_E)$ via $\GL(Y) \cong \GL_n(E)$.
\end{itemize}
Since $a^{-1} \in \M_n(\oo_E)$, we have $\lambda_i \leq 0$ for $1 \leq i \leq n$. 
Note that 
\[
a^* f_{-n}-f_{-n} = a_0^* a_d^* f_{-n}-f_{-n} = a_0^* \varpi^{\lambda_n}f_{-n}-f_{-n}. 
\]
Since this is in $\oplus_{i=1}^n \pp_E^m f_{-i}$, 
we have $\lambda_n = 0$ and $\bm_Y(a_0) \in {}^tK_{2m}^W$.
Hence we may assume that $a_0 = \1_X$, i.e., $g = \bm_Y(a_d)$.
For $2 \leq i \leq n$ and $x \in \oo_E$, define $u_i \in N_{2n}'$ so that 
\[
u_i f_j - f_j = \left\{
\begin{aligned}
&x \cdot f_{i-1} \iif j=i, \\
&0 \iif j\not=i.
\end{aligned}
\right. 
\]
Then $u_i \in {}^tK_{2m}^W$.
Hence 
\begin{align*}
0 &\not= \tl{F}_{\rho(t e_0 \otimes f_{-n},0)^{-1} (\varphi_1^0 \otimes \varphi_2^0)}(\bm_Y(a_d)) 
\\&=  \tl{F}_{\rho(t e_0 \otimes f_{-n},0)^{-1} (\varphi_1^0 \otimes \varphi_2^0)}(\bm_Y(a_d)u_i) 
\\&= \mu(\bm_Y(a_d) u_i \bm_Y(a_d)^{-1}) \tl{F}_{\rho(t e_0 \otimes f_{-n},0)^{-1} (\varphi_1^0 \otimes \varphi_2^0)}(\bm_Y(a_d)) 
\end{align*}
so that $\mu(\bm_Y(a_d) u_i \bm_Y(a_d)^{-1}) = 1$ for any $x \in \oo_E$.
Note that 
\begin{align*}
\mu(\bm_Y(a_d) u_i \bm_Y(a_d)^{-1}) 
&= \psi_E( \pair{\bm_Y(a_d) u_i \bm_Y(a_d)^{-1} f_{i}, f_{-i+1}} )
\\&= \psi_E( \varpi^{\lambda_{i-1}-\lambda_i}x).
\end{align*}
Hence $\psi_E( \varpi^{\lambda_{i-1}-\lambda_i}x) = 1$ for any $x \in \oo_E$. 
This implies that $\lambda_{i-1} \geq \lambda_i$.
In conclusion, we have 
\[
0 \geq \lambda_1 \geq \dots \geq \lambda_{n-1} \geq \lambda_n = 0
\]
so that $\lambda_1=\dots=\lambda_n=0$.
This means that $a_d = \1_X$.
This completes the proof of Lemma \ref{supp}.
\end{proof}


\end{document}